\newtheorem{lemma}{Lemma}[section]
\newtheorem{theorem}[lemma]{Theorem}
\newtheorem{corollary}[lemma]{Corollary}
\newtheorem{remark}[lemma]{Remark}
\newtheorem{definition}[lemma]{Definition}
\def\beq{\begin{equation}}   \def\eeq{\end{equation}}
\def\bea{\begin{eqnarray}}  \def\eea{\end{eqnarray}}
\newcommand{\lan}{\langle}
\newcommand{\ran}{\rangle}
\newcommand{\di}{{\rm d}}
\newcommand{\n}{{(N)}}
\renewcommand{\e}{{\rm e}}
\newcommand{\ao}{{\rm an}}
\newcommand{\hos}{{\rm ho}}
\renewcommand{\r}{\rho}
\def\norma#1{\left\|#1\right\|}
\def\uno{{\bf 1}}
\numberwithin{equation}{section}
\title{Growth of Sobolev norms for abstract linear Schr\"odinger equations}
\author{
D. Bambusi\footnote{Dipartimento di Matematica Federigo Enriques, Universit\`a degli Studi di Milano, Via Saldini 50, I-20133
Milano, Italy \newline
 \textit{Email: } \texttt{dario.bambusi@unimi.it}},
B. Gr\'ebert\footnote{Laboratoire de Math\'ematiques Jean Leray, Universit\'e de Nantes, 2 rue de la Houssini\`ere
BP 92208, 44322 Nantes Cedex 3, France \newline
 \textit{Email: } \texttt{benoit.grebert@univ-nantes.fr}} ,
A. Maspero\footnote{ International School for Advanced Studies (SISSA), Via Bonomea 265, 34136, Trieste, Italy \newline
 \textit{Email: } \texttt{alberto.maspero@sissa.it}}, 
  D. Robert\footnote{Laboratoire de Math\'ematiques Jean Leray, Universit\'e de Nantes, 2 rue de la Houssini\`ere
BP 92208, 44322 Nantes Cedex 3, France \newline
 \textit{Email: } \texttt{didier.robert@univ-nantes.fr}}
}
\date{\today}
\begin{document}

	\maketitle

\begin{abstract}
We prove an abstract theorem giving a $\langle t\rangle^\epsilon$
bound ($\forall \epsilon>0$)  on the growth of the Sobolev norms in linear
Schr\"odinger equations of the form $\im \dot \psi = H_0 \psi + V(t)
\psi $ when the time $t \to \infty$. The abstract theorem is applied to several cases, including
the cases where (i) $H_0$ is the Laplace operator on a Zoll manifold
and $V(t)$ a pseudodifferential operator of order smaller than 2;
(ii) $H_0$ is the (resonant or nonresonant) Harmonic oscillator in
$\R^d$ and $V(t)$ a pseudodifferential operator of order smaller
than $H_0$ depending in a quasiperiodic way on time. The proof is obtained by first conjugating the system to
some normal form in which the perturbation is a smoothing operator
and then applying the results of \cite{MaRo}.
\end{abstract}

%	\tableofcontents

\section{Introduction} 

In this paper we study growth of Sobolev norms for solutions
of the abstract linear Schr\"odinger equation 
\begin{equation}
\label{LS}
\im \partial_t  \psi = H_0 \psi + V(t) \psi \ , 
\end{equation}
in a scale of Hilbert spaces $\cH^r$; here $V(t)$ is a time dependent
operator and $H_0$ a time independent linear operator. We will prove
some abstract results ensuring that for any $r\geq0$ and any
$\epsilon>0$, the $\cH^r$ norm of the solution grows in time at most
as $\langle t\rangle^\epsilon$ as $t \to \infty$, where $\la t \ra := \sqrt{1+t^2}$. The main novelty of
our results is that they allow (1) to weaken the standard gap
assumptions on the spectrum of $H_0$, in particular to deal with some
cases where the gaps are dense in $\R$, and (2) to deal with
perturbations which are of any order strictly smaller than that of
$H_0$ (see below for a precise definition).

The main applications are to the case where
\begin{itemize}
\item[(i)] $H_0$ is either  the Laplace operator on a Zoll manifold
  (e.g. the spheres) or an anharmonic oscillator in $\R$, while $V$ is
  an operator depending arbitrarily on time and having order strictly
  smaller than $H_0$;
\item[(ii)] $H_0$ is the (possibly nonresonant) multidimensional
  Harmonic oscillator and $V(t)$ is an operator which {\it depends on
    time in a quasiperiodic way} and has order strictly smaller than
  $H_0$.
\end{itemize}
Further applications will be presented in the paper.

We emphasize in particular the results (ii) which, as far as we know
are the first controlling growth of Sobolev norms in higher
dimensional systems without any gap condition.

The proof is based on the combination of the ideas of
\cite{Bam16I, Bam17b,BGMR1} (which in turn are a developments of the ideas
of \cite{BBM14}, see also \cite{PT01,IPT05}) and the results of
\cite{MaRo}; precisely, for any positive $N$, we construct a (finite)
sequence of unitary time dependent transformations conjugating $H_0+V(t)$ to a
Hamiltonian of the form
\begin{equation}
  \label{Z.1}
H_0+Z^{(N)}(t)+V^{(N)}(t)\ ,
\end{equation}
where $[H_0;Z^{(N)}]=0$ and $V^{(N)}$ is a smoothing operator of order
$N$, namely an operator belonging to $\cL(\cH^s;\cH^{s+N})$ for any
$s$ (linear bounded operators from $\cH^s$ to $\cH^{s+N}$). Then we
apply Theorem 1.5 of \cite{MaRo} to \eqref{Z.1} getting the $\langle
t\rangle^\epsilon$ bound on the growth of Sobolev norms. 

We think that a further point of interest of our paper is that the
conjugation to a system of the form \eqref{Z.1} is here developed in
an abstract context, instead then in the framework of classes of
pseudodifferential operators adapted to the situation under study;
this is the main reason why we get an abstract theory directly
applicable to many different contexts.

The main point is that we introduce an abstract graded algebra
  of operators whose properties mimic the properties of
  pseudodifferential operators. The use of this framework is made
  possible by the technique we develop to solve the homological
  equations met in the construction of the conjugation of $H$ to
  \eqref{Z.1}. Indeed, we recall that in previous papers the smoothing
  theorem, namely the result conjugating the original system to
  \eqref{Z.1} was obtained by quantizing the procedure of classical
  normal form. Here instead, we work directly at the quantum level, in
  particular solving at this level the two homological equations that
  we find (see eqs. \eqref{hom1} and \eqref{hom4} below).

It is worth to add a few words on the way we solve the homological
equations. 
When dealing with systems related to the applications (i),
we assume that $H_0=f(K_0)$ where $f$ is a superlinear
function and $K_0$ is an operator s.t.
\begin{equation}
  \label{sp.i}
{\rm spec}(K_0)\subset \N+\lambda\ ,\quad \lambda > 0\ .
\end{equation}
In this case we solve the
homological equation essentially by averaging over the flow $e^{-\im t
K_0}$ of $K_0$.
 In
turn this is made possible by the use of a commutator expansion lemma
proved in \cite{dege}. 
When dealing with the $d$ dimensional harmonic
oscillators instead, we take
$$
H_0=\sum_{j=0}^{d}\nu_jK_j\ ,
$$ with $K_j$ commuting linear operators, each one fulfilling the
property \eqref{sp.i}
(think of $K_j=-\partial_{x_j}^2+x_j^2$) and $\nu_j>0$; then we
consider operators of the form 
$$
{\rm e}^{\im\tau\cdot K} \, A \, {\rm e}^{-\im\tau\cdot K}
$$
(where of course $\tau\cdot K:=\tau_1 K_1+...+\tau_d K_d$), remark
that they are quasiperiodic in the ``angles'' $\tau$, and use a Fourier
expansion in $\tau$ in order to solve the homological equation. 

\vskip10pt
The study of growth of Sobolev norms and the related results on the
nature of the spectrum of the Floquet operator has a long history: we
recall the results by \cite{How89, How92, joy} showing that the Floquet
spectrum of systems with growing gaps and bounded perturbations is
pure point, a result which implies boundedness of the expectation
value of the energy.
The first $\la t \ra^\epsilon$ estimates on the 
expectation value of the energy for system of the form \eqref{LS} was
obtained by Nenciu in  \cite{nen} for the case of increasing gaps and
bounded perturbations (see also \cite{barjoy, joy2} for similar results), and by 
 Duclos, Lev  and S{\v t}ov\'\i{\v c}ek \cite{duclos} in case of  shrinking
gaps. In the case of increasing gaps, such results were
improved recently by two of us (see \cite{MaRo}) who obtained the
$\langle t\rangle^\epsilon$ growth of Sobolev norms also in the case
of unbounded perturbations depending arbitrarily on time, for example
in the case where $H_0=-\partial_{x}^2+x^{2k}$, the result of
\cite{MaRo} allows to deal with perturbations growing at infinity as
$|x|^m$ with $m<k-1$. In the present paper we get the result for any
$m<2k$. The result of \cite{MaRo} also applies to perturbations of the
free Schr\"odiger equation on Zoll manifolds with perturbations of
order strictly smaller than 1. Here we deal with perturbations of
order strictly smaller than 2. 
A study of perturbations of maximal order has been done independently
by Montalto \cite{Mon17} who got a control of the growth of Sobolev
norms for the Schr\"odinger equation on $\T$ with
$H=a(t,x)\left|-\partial_{xx}\right|^M+V(t)$ with $M>1/2$, $a$ a smooth
positive function and $V$ a pseudodifferential operator of order
smaller than $M$.

Finally we  recall that in \cite{MaRo}
logarithmic estimates for the growth of Sobolev norms were also
obtained in the case of  perturbations depending analytically on time. Here we do not attack
the problem of getting logarithmic estimates, but we think that our
technique would also allow to get such estimates. 

A remarkable further result was obtained by Bourgain
\cite{Bourgain1999} who obtained a logarithmic bound on the growth of
Sobolev norms for the Schr\"odinger equation on $\T^d$ ($d =1,2$) in
the case of an analytic perturbation depending quasiperiodically on
time. Such a result is based on the use of a Lemma on the clustering
of resonant sites (in a suitable {space time} lattice) which
does not seem to extend to different geometries. The result of
Bourgain was extended by Wang \cite{wang08} to deal with Schr\"odinger
equations on $\T$ perturbed by a potential analytic in time (but
otherwise depending arbitrary on time) and greatly simplified by
Delort \cite{del10} who used it in an abstract framework which allows
to deal with the case of $\T^d$ (any $d \geq 1$) and also with the
case of Zoll manifolds, obtaining a growth bounded by $\la t
\ra^\epsilon$ (see also \cite{fang} for analytic potentials on
$\T^d$). We also mention the reducibility result by \cite{EK09}
dealing with small quasiperiodic perturbations of the free
Schr\"odinger equation on $\T^d$; for such a system, the authors prove
that growth of Sobolev norms cannot happen, provided the frequency of
the quasiperiodic solution {is chosen in a nonresonant set}. At
present our method does not allow to deal with the Schr\"odinger
equation on $\T^d$ for $d\geq 2$.

Concerning Harmonic oscillators in $\R^d$ with $d>1$, a couple of
reducibility results are known, namely \cite{GP} in which the authors
study small bounded perturbations of the {\it completely resonant}
Harmonic oscillator, and \cite{BGMR1} in which we studied small {\it
  polynomial} perturbations of the resonant or nonresonant Harmonic
oscillator.

As far as we know no results are known on growth of Sobolev norms for
perturbations of the harmonic oscillator:
\begin{equation}
  \label{nr}
H_0:=-\Delta+\sum_{j=1}^d\nu_j^2x_j^2\ ,
\end{equation}
with nonresonant frequencies $\nu_j$. This is due to the fact that the
differences between two of its eigenvalues
$\left\{\lambda_a\right\}_{a\in\N^d}$, namely
$$
\lambda_a-\lambda_b=\nu\cdot(a-b)
$$ are dense on the real axis and this prevents the use of {\it any}
previous technique. As anticipated above here we obtain the $\langle
t\rangle^{\epsilon}$ growth for the case of perturbation of order
strictly smaller than the order of the Harmonic oscillator.

\vspace{2em}
\noindent{\bf Acknowledgments.} During the preparation of this work, we were supported 
 by ANR -15-CE40-0001-02  ``BEKAM'' of the Agence Nationale de la Recherche.
A. Maspero is also  partially supported by PRIN 2015 ``Variational methods, with applications to problems in mathematical physics and geometry".

\section{Main results}
\subsection{An abstract graded algebra}\label{sec:alg}
We start with a Hilbert space $\cH$ and a reference operator $K_0$,
which we assume to be selfadjoint and positive, namely such that
$$
\langle \psi; K_0\psi\rangle\geq c_K \norma{\psi}^2\ ,\quad \forall
\psi\in D(K_0^{1/2})\ ,\quad c_K>0\ , 
$$ and define as usual a scale of Hilbert spaces by $\cH^r=D(K_0^r)$
(the domain of the operator $K_0^r$) if $r\geq 0$, and
$\cH^{r}=(\cH^{-r})^\prime$ (the dual space) if $ r<0$.  Finally we
denote by $\cH^{-\infty} = \bigcup_{r\in\R}\cH^r$ and $\cH^{+\infty} =
\bigcap_{r\in\R}\cH^r$.  We endow $\cH^r$ with the natural norm
$\norm{\psi}_r:= \norm{(K_0)^r \psi}_{0}$, where $\norm{\cdot}_0$ is
the norm of $\cH^0 \equiv \cH$. Notice that for any $m\in\R$,
$\cH^{+\infty}$ is a dense linear subspace of $\cH^m$ (this is a
consequence of the spectral decomposition of $K_0$).
      
We introduce now a graded algebra $\cA$ of operators which mimic some
fundamental properties of different classes of pseudo-differential
operators.  For $m\in\R$ let $\cA_m$ be a linear subspace of $
\bigcap_{s\in\R}\cL(\cH^s,\cH^{s-m})$ and define
$\cA:=\bigcup_{m\in\R}\cA_m$.  We notice that the space
$\bigcap_{s\in\R}\cL(\cH^s,\cH^{s-m})$ is a Fr\'echet space equipped
with the semi-norms: $\Vert A\Vert_{m,s} := \Vert
A\Vert_{\cL(\cH^s,\cH^{s-m})}$.

One of our aims is to control the smoothing properties of the operators
in the scale $\{\cH^r\}_{r\in\R}$.  If $A\in\cA_m$ then $A$ is more
and more smoothing if $m\rightarrow -\infty$ and the opposite as
$m\rightarrow +\infty$. We will say that $A$ is of {\em order $m$} if
$A \in \cA_m$.
    \begin{definition}
    \label{smoothing}
    %   Let $S\colon \cH^{+\infty} \to \cH^{-\infty}$.
     We say  that $S\in\cL(\cH^{+\infty},\cH^{-\infty} )$ is $N$-smoothing if  $\forall \kappa \in \R$, it  can be extended  to an operator in   $\cL(\cH^{\kappa}, \cH^{\kappa+N})$.  When this is true  for  every $N\geq 0$, we say that $S$  is a smoothing  operator.
    \end{definition}
    The first set of assumptions concerns the properties of  $\cA_m$:\\
    
     \noindent
     {\bf Assumption I:} 
     \begin{itemize}
      \item[(i)] For each $m\in \R$, $K_0^m\in\cA_m$; in particular
        $K_0$ is an operator of order one.
\item[(ii)]  For each $m\in\R$, $\cA_m$ is a Fr\'echet space for a family of semi-norms $\{ \wp^m_j \}_{j\geq 1}$  such that the embedding $\cA_m\hookrightarrow  \bigcap_{s \in \R} \cL(\cH^s,\cH^{s-m})$ is continuous.\\
 If $m^\prime \leq m$ then $\cA_{m^\prime}\subseteq\cA_m$ with a continuous embedding.

    \item[(iii)]    $\cA$ is a graded algebra,  i.e $\forall m,n\in \R$:  if $A\in \cA_m$   and $B\in\cA_n$ then $A B\in\cA_{m+n}$ 
     and the map $(A,B)\mapsto AB$ is continuous from 
     $\cA_{m}\times\cA_{n}$ into $\cA_{m+n}$.
     \item[(iv)] $\cA$ is a graded Lie-algebra\footnote{This property will impose the choice of the semi-norms $\{ \wp^m_j \}_{j\geq 1}$. We will see in the examples that the natural  choice $(\Vert \cdot\Vert_{m,s})_{s\geq 0}$ has to be refined.  } : if $A\in \cA_m$   and $B\in\cA_n$ then the commutator $[A,B]\in\cA_{m+n-1}$  and the map $(A,B)\mapsto [A,B]$
      is continuous from $\cA_{m}\times\cA_{n}$ into $\cA_{m+n-1}$.
      
     \item[(v)] $\cA$ is closed under  perturbation by smoothing operators in the following sense:
     let $A$ be a linear map: $\cH^{+\infty}\rightarrow\cH^{-\infty}$. If   there  exists $m\in\R$ such that  for  every $N>0$  we  have  a   decomposition        $A= A^{(N)}+ S^{(N)}$,  with $A^{(N)}  \in\cA_m$  and $S^{(N)}$  is  $N$-smoothing,  then
      $A\in\cA_m$. 
       \item[(vi)] If $A \in \cA_m$ then also the adjoint operator $A^* \in \cA_m$.
  The  duality here is defined by the scalar product 
      $\lan\cdot, \cdot\ran$ of $\cH=\cH^0$. The adjoint $A^*$ is defined by 
      $\lan u, Av\ran = \lan A^*u, v\ran$  for $u,v\in\cH^\infty$  and extended by continuity.  
     \end{itemize}
    
     It is well known that classes of pseudo-differential operators
     satisfy these properties, provided one chooses for $K_0$ a
     suitable operator of the right order (see e.g. \cite{ho}).\\ In
     \cite{guil} V. Guillemin has introduced abstract
     pseudo-differential algebras, called generalized Weyl
     algebras. For his purpose \cite{guil} needs different properties
     than ours, but obviously there is an overlap with our presentation.
\begin{remark}
\label{rem:control}
One has that $ \forall A\in\cA_m$, $\forall B\in\cA_n$
\begin{align}
\label{est.1}
\forall m, s  \quad \exists N \ s.t.\  &\norm{A}_{m,s} \leq C_1 \,
\wp^m_N(A)  \ ,
\\
\label{est.2}
\forall m, n, j \quad \exists N \ s.t.\  &\wp^{m+n}_j(AB) \leq C_2 \, \wp^m_N(A)
\, \wp^n_N(B)  \ , 
\\
\label{est.3}
\forall m, n, j\quad  \exists N \ s.t.\   &\wp^{m+n-1}_j( [A, B] ) \leq C_3 \, \wp^m_N(A) \, \wp^n_N(B)  \ , 
\end{align}
for some positive
constants $C_1(s,m)$, $C_2(m,n,j)$, $C_3(m,n,j)$.
\end{remark}

For $\Omega\subset \R^d$ and $\cF$ a Fr\'echet space, 
we will denote by $C_b^m(\Omega, \cF)$ the space of
$C^m$ maps $f: \Omega\ni x\mapsto f(x)\in\cF$, such that,  for every
seminorm $\norm{\cdot}_j$ of $\cF$ one has
       \begin{equation}
       \label{star}
       \sup_{x\in\Omega}\Vert\partial_x^\alpha f(x)\Vert_{j} < +\infty
       \ , \quad \forall \alpha\in \N^d\  :\ \left|\alpha\right|\leq m  \ .
       \end{equation}
If \eqref{star} is true
$\forall m$, we say $f \in C^\infty_b(\Omega, \cF)$.

The next property needed is the following Egorov property, also well
known for pseudo-differential operators.  \\

     \noindent
       {\bf Assumption II:}  
       For any $A\in\cA_m$ and $\tau \in \R$,   the map
         $\tau\mapsto A(\tau):={\rm e}^{\im \tau K_0}\, A \, {\rm e}^{-\im \tau
         K_0}\in C^0_b(\R, \cA_m)$.

\vspace{1.5em}

       \begin{remark}
  \label{ex.ii}
From Assumption II one has that, for any $B\in\cA_{n}$, for any
$\ell\in\N$, ${\rm ad}_{A(s)}^\ell(B)\in C_b^0(]-T, T[,
    \cA_{n+(m-1)\ell})$, $\forall T>0$. Here  ${\rm ad}_{A}(B):=\im [A,B]$.
\end{remark}

       Remark that {\bf Assumption II}  is a quantum property for the time evolution of observables. Practically it follows from  the time evolution 
        of classical observables  (Hamilton equation) if  some classes of symbols are  preserved under the classical flows. Indeed one might replace {\bf Assumption II}  by   a weaker one (see Appendix   \ref{AET}).

\subsection{Perturbations of systems of order larger than 1}       
   Now  we state  our  spectral assumption on $K_0$:\\
   
   \noindent
        {\bf Assumption A :}
        $K_0$  has an entire  discrete spectrum such that 
\begin{equation}
\label{specK0}
{\rm spec}(K_0) \subseteq \N +\lambda
\end{equation}  
 for some $\lambda > 0$.\\
 
Our second spectral assumption is essentially that the unperturbed
operator $H_0$ is a function of $K_0$. To state it precisely we need the
following definition

  \begin{definition}
  \label{def:cl.sy}
  A function $f\in C^\infty(\R)$  will be said to be a {\em   classical symbol of order $\rho$} (at $+\infty$)  if there  exist real numbers $\{c_j\}_{j\geq 0}$ s.t. 
  $c_0\geq 0$ and 
  for all $k\geq 1$, all $N\geq 1$, there exists $C_{k,N}$ s.t. 
  $$
  \abs{\frac{d^k}{dx^k}\bigl(f(x)-\sum_{0\leq j\leq N-1}c_jx^{\rho-j}\bigr)} \leq C_{k,N}\vert x^{\rho-N-k}\vert, \;\; \forall x\geq 1.
  $$ We will denote by $S^\rho$ the space of classical symbols of
  order $\rho$.\\ We shall say that $f$ is an {\em elliptic classical
    symbol of order $\rho$} if $f$ is real and $c_0>0$. We shall write
  $f \in S^\rho_+$.\\ 
We shall say that $f$ is a {\em classical symbol
    of order $- \infty$} if $f \in S^m$ $\forall m <0$. We shall write
  $f \in S^{- \infty}$. 
    \end{definition}
Some standard properties of classical symbols are recalled in Appendix \ref{sec:tec}. 
We assume that\\

\noindent {\bf Assumption B:} There exists an elliptic classical
symbol $f$ of order $\mu>1$, such that
  \begin{equation}\label{H0}
H_0=f(K_0) \ . 
\end{equation}  
\vspace{.5em}\\
We will prove (see Lemma \ref{ref:fc}) that \eqref{H0} implies  $H_0 \in
\cA_{\mu}$, i.e. $H_0$ is an operator of order $\mu>1$. 
   
We come back to the Schr\"odinger equation defined by the time
dependent Hamiltonian $H(t) := H_0 + V(t)$ (see \eqref{LS}).  When the
solution $\psi(t)$ exists globally in time, we define the
Schr\"odinger propagator $\cU(t,s)$, generated by \eqref{LS}, such
that \beq
   \label{propag}
    \psi(t)=\cU(t, s)\psi \ , \quad
    \cU(s,s)=\uno
    \eeq
    We are ready to state our main result on systems with increasing gaps:
\begin{theorem}\label{thm:main}
Assume that $\cA$ is a graded algebra as defined in Section
\ref{sec:alg} and that $K_0$, $H_0$ satisfy assumptions A and
B. Furthermore assume that the perturbation $V(t)$ with domain
$\cH^\infty$ is symmetric for every $t\in\R$ and satisfies
    \begin{equation}
    \label{V.cond}
    V\in C_b^\infty(\R, \cA_\r) \ , \qquad \mbox{ with } \ \ \r<
    \mu\ .  
    \end{equation}
     Then $H(t)=H_0 + V(t)$  generates a propagator $\cU(t,s)$  s.t.  $\cU(t,s)\in\cL(\cH^r)$  $\, \forall r\in\R$. \\
     Moreover
     for any $r > 0$ and any $\epsilon >0$  there exists $C_{r,\epsilon}>0$ such that
  \beq\label{pr1}
     \Vert \cU(t,s)\psi\Vert_r\leq  C_{r,\epsilon}\la t-s\ra^\epsilon\Vert\psi\Vert_r,\qquad \forall t, s\in\R.
     \eeq
      \end{theorem}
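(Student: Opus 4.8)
Following the strategy announced above, the plan is to combine an a priori energy bound with a \emph{smoothing normal form} — conjugating \eqref{LS} to a perturbation of $H_0$ by a term commuting with $H_0$ plus a smoothing remainder — and then to invoke Theorem 1.5 of \cite{MaRo}.

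First I would establish the a priori statement. Since $f$ is an elliptic symbol of order $\mu$, interpolation gives $\Vert u\Vert_\rho\le\delta\Vert H_0u\Vert_0+C_\delta\Vert u\Vert_0$ for every $\delta>0$, so $V(t)$ is infinitesimally $H_0$-bounded; hence $H(t)$ is selfadjoint on $D(H_0)=\cH^\mu$ for every $t$ and, $V$ being $C^1$ in $t$, generates a strongly continuous unitary propagator $\cU(t,s)$ on $\cH^0$. To get $\cU(t,s)\in\cL(\cH^r)$ for all $r$, differentiate $\Vert\cU(t,s)\psi\Vert_r^2$: the contribution of the diagonal part of $-\im H(t)$ is purely imaginary, $K_0^r$ commutes with $H_0=f(K_0)$, and one is left to control $[K_0^r,V(t)]K_0^{-r}$, which has order $\rho-1<\mu-1$; a bootstrap in $r$ (using that $\cH^{+\infty}$ is preserved) then closes a Gronwall estimate and yields $\cU(t,s)\in\cL(\cH^r)$ with an a priori — possibly exponential, and ultimately irrelevant — bound.

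The core of the proof is the smoothing normal form. For each $N$ I would construct a finite product of time-dependent unitaries $\cT_N(t)={\rm e}^{\im W_1(t)}\cdots{\rm e}^{\im W_{k_N}(t)}$ conjugating $H(t)$ to $H_0+Z^{(N)}(t)+V^{(N)}(t)$ as in \eqref{Z.1}, with $[H_0,Z^{(N)}]=0$, $Z^{(N)}\in C_b^\infty(\R,\cA_\rho)$ and $V^{(N)}\in C_b^\infty(\R,\cA_{-N})$. Each step removes the part of the current perturbation $V_\bullet$ (of order $\rho_\bullet<\mu$) not commuting with $H_0$ by solving the homological equation $\im[H_0,W]=V_\bullet-\langle V_\bullet\rangle$, where $\langle\cdot\rangle$ is the block-diagonal part with respect to the eigenspaces of $H_0$; since $f$ is monotone outside a compact set (on the relevant half-line), $\langle V_\bullet\rangle$ differs from the $K_0$-block-diagonal part of $V_\bullet$ only by a finite-rank, hence smoothing, operator, and the equation need only be solved modulo such operators. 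Its solution $W$ will have order $m:=\rho_\bullet-\mu+1<1$, and I would obtain it in two moves: (a) by the commutator-expansion lemma of \cite{dege}, $[f(K_0),W]=f'(K_0)[K_0,W]+R(W)$ with $R(W)$ of order $m+\mu-2=\rho_\bullet-1$, which reduces the equation to $\im[K_0,W]=(f'(K_0))^{-1}(V_\bullet-\langle V_\bullet\rangle)$ modulo an error of order $\rho_\bullet-1$, with $(f'(K_0))^{-1}\in\cA_{-(\mu-1)}$, $f'$ being elliptic of order $\mu-1$ (cf. Lemma \ref{ref:fc}); (b) writing $\widetilde V\in\cA_m$ for the right-hand side, one solves $\im[K_0,W]=\widetilde V$ by averaging over the flow of $K_0$: since ${\rm spec}(K_0)\subset\N+\lambda$, the map $\tau\mapsto{\rm e}^{\im\tau K_0}\widetilde V{\rm e}^{-\im\tau K_0}$ is $2\pi$-periodic and, by Assumption II together with $\tfrac{d}{d\tau}\widetilde V(\tau)=\im[K_0,\widetilde V(\tau)]\in\cA_m$, belongs to $C_b^\infty(\R,\cA_m)$, so its Fourier coefficients in $\tau$ decay rapidly; dividing the $\ell$-th one by the nonzero integer $\im\ell$ (no small divisors) gives $W\in\cA_m$. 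As $m<1$ and $W$ is selfadjoint, ${\rm e}^{\im W(t)}$ is bounded on every $\cH^r$ uniformly in $t$ — differentiating $\Vert{\rm e}^{\im\sigma W}\psi\Vert_r^2$, the top-order term cancels by selfadjointness and $[K_0^r,W]K_0^{-r}$ has negative order, so Gronwall closes. A direct bookkeeping with Assumptions I(iii)--(iv) shows the new perturbation — collecting $-\dot W$, $\im[W,V_\bullet]$, the terms of order $\ge2$ in $W$ from ${\rm e}^{\im W}H_0{\rm e}^{-\im W}$, and $R(W)$ — has order $\max\{\rho_\bullet-\mu+1,\,2\rho_\bullet-\mu,\,\rho_\bullet-1\}<\rho_\bullet$; the order thus drops by a fixed positive amount at each step, so after $O(N)$ steps it is below $-N$, the smoothing errors being absorbed via Assumption I(v), and everything stays $C_b^\infty$ in $t$ since averaging and commutators preserve this.

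Finally, $H_0=f(K_0)$ with $f$ elliptic of order $\mu>1$ has an increasing-gap spectrum: consecutive distinct eigenvalues satisfy $f(n+1+\lambda)-f(n+\lambda)\sim c_0\mu\,n^{\mu-1}\to\infty$. After, if necessary, conjugating away the benign term $Z^{(N)}$ by $\psi=\Phi(t)\chi$, $\Phi$ the propagator of $\im\dot\psi=Z^{(N)}(t)\psi$ — which commutes with $H_0$ and is therefore, up to a finite-rank correction, an isometry of every $\cH^r$ — one is reduced to $H_0+\widetilde V^{(N)}(t)$ with $\widetilde V^{(N)}=\Phi^{-1}V^{(N)}\Phi$ still $N$-smoothing, and Theorem 1.5 of \cite{MaRo} applies for $N$ large enough (depending on $r$), giving $\Vert\widetilde\cU(t,s)\chi\Vert_r\le C_{r,\epsilon}\la t-s\ra^\epsilon\Vert\chi\Vert_r$. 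Since $\cU(t,s)=\cT_N(t)^{-1}\Phi(t)\,\widetilde\cU(t,s)\,\Phi(s)^{-1}\cT_N(s)$ with $\cT_N(t)$ and $\Phi(t)$ bounded on $\cH^r$ uniformly in $t$, estimate \eqref{pr1} follows. I expect the main obstacle to be this normal-form step: carrying out the homological equation using only the abstract Assumptions I--II rather than an explicit symbolic calculus — in particular justifying the commutator-expansion lemma and the Fourier/averaging representation at that level, keeping precise track of orders and of the Fréchet semi-norms through the iteration, and handling the range $\mu-1\le\rho<\mu$, where the generators $W_j$ have order in $[0,1)$ yet must still generate transformations preserving the whole scale $\{\cH^r\}$. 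Checking that $H_0=f(K_0)$, with its finitely many possible spectral degeneracies, meets the gap hypotheses of \cite{MaRo}, and that $Z^{(N)}$ is removable, is a secondary technical point.
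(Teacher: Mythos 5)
Your proposal follows the paper's route essentially step by step: conjugate by time-dependent unitaries generated by solutions of a homological equation, reduce $\im[H_0,\cdot]$ to $f'(K_0)\im[K_0,\cdot]$ modulo lower order via the commutator-expansion lemma of \cite{dege}, solve $\im[K_0,Y]=A-\langle A\rangle$ by averaging over the $2\pi$-periodic $K_0$-flow (the paper uses the explicit weighted integral $Y=\frac{1}{2\pi}\int_0^{2\pi}\tau\,(A-\langle A\rangle)(\tau)\,\di\tau$ rather than dividing Fourier coefficients by $\im\ell$, but these are the same operator), iterate to reach $H_0+Z^{(N)}+V^{(N)}$, and apply Theorem 1.5 of \cite{MaRo}. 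The one place you add unnecessary work — and slightly obscure the key point — is the final conjugation away of $Z^{(N)}$ by the propagator $\Phi(t)$ of $\im\dot\psi=Z^{(N)}\psi$: the hypothesis \eqref{maro.cond} of \cite{MaRo} involves only the commutator $[H(t),K_0]$, and since $[H_0+Z^{(N)},K_0]=0$ one already has $[H^{(N)}(t),K_0]=[V^{(N)}(t),K_0]$ of arbitrarily negative order; consequently no gap hypothesis on the spectrum of $H_0$ is invoked, no removal of $Z^{(N)}$ is needed, and one avoids having to check that $\Phi(t)^{-1}V^{(N)}(t)\Phi(t)$ is still $C^0_b$ with values in a smoothing class. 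Likewise your use of the $H_0$-block-diagonal part, corrected by finite-rank terms, is a detour — the paper works with the $K_0$-average directly from the start, which is what the homological equation and the final application of \cite{MaRo} actually require.
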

This result extends a result by Nenciu \cite{nen} for bounded
perturbations ($\r=0$). Furthermore in \cite{MaRo} two of us had
already extended Nenciu's result to unbounded perturbations with the constraint $\r< \min( \mu-1, 1)$. The main point is that we
add here a stronger spectral assumption: essentially the spectrum of
$H_0$ is $f(\N+\lambda)$ for some smooth function $f$ (see Assumptions A and
B).

As a final remark, we note that Theorem \ref{thm:main} gives also a
proof of the existence and of some properties of the propagator
$\cU(t,s)$, which in the framework of Theorem \ref{thm:main} are not
obvious.

%      \begin{remark}
%     In  \cite{maro} estimate \eqref{pr1} it is proved under the constrained that 
%   ${\rm spec}(H_0) \subseteq \bigcup_j \sigma_j$, $\sigma_j\subset \R$, with 
%  $$
%  {\rm dist} (\sigma_{j+1}, \sigma_j)  \sim c_0 j^{\mu} \ , \qquad j \to \infty \ ,
%  $$
%and    $V H_0^{-\sigma}$ is a bounded operator, with   $\displaystyle{\sigma < \frac{\mu}{\mu+1}}$.  In the hypothesis of Theorem \ref{thm:main}, the first condition is always fulfilled, while for the second one it is needed the restriction $\rho < \mu$. Therefore Theorem \ref{thm:main} extends the results of \cite{maro} to more unbounded perturbations.
%      \end{remark}

\subsection{Applications (i)}
           
      \paragraph*{Zoll manifolds.}
      Recall that a Zoll manifold is a compact Riemannian manifold
      $(M, g)$ such that all the geodesic curves have the same period
      $T:=2\pi$. For example the $d$-dimensional sphere $\S^d$ is a Zoll manifold.  
      We
      denote by $\triangle_g$ the positive Laplace-Beltrami operator
      on $M$ and by $H^r(M) = {\rm Dom}(1+\triangle_g)^{r/2}$, $r\geq 0$, the usual scale of Sobolev spaces.  Finally we denote by
      $S_{\rm cl}^m(M)$ the space of classical real valued symbols of
      order $m\in \R$ on the cotangent $T^*(M)$ of $M$ (see
      H\"ormander \cite{ho} for more details).
      \begin{definition}
        \label{pseudo}
We say that $A\in\cA_m$ if it is a pseudodifferential operator (in the sense of H\"ormander \cite{ho}) with
symbol of class $S^m_{\rm cl}(M)$.
      \end{definition}
 In this case the operator $K_0$ is a
      perturbation of order $-1$ of $\sqrt{\triangle_g}$ (see
      Sect. \ref{zoll.app}), and the norms $\left\|\psi\right\|_r$
      coincide with the standard Sobolev norms.

      \begin{corollary}[Zoll manifolds] 
      \label{Zoll} Let $V(t)$ be a symmetric pseudo-differential
      operator  of order $\r<2$  on $M$   such that its symbol
      $v\in C^\infty_b(\R; S^\rho_{\rm cl}(M))$.      
%      \begin{equation}
%      \label{zoll.symb}
%    \sup_{t\in\R} \wp^\r_r\left(\partial_t^\ell v(t)\right)<+\infty \ , \qquad
%      \forall \, r\geq 0 \ , \ \ \forall \ell \in \N \ .
%      \end{equation}
         Then  the propagator $\cU(t,s)$   generated by $H(t)= \triangle_g +  V(t)$ exists and satisfies \eqref{pr1}.
      \end{corollary}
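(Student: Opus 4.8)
The plan is to verify that the concrete setting of pseudodifferential operators on a Zoll manifold $M$ fits the abstract framework of Section~\ref{sec:alg}, and then simply invoke Theorem~\ref{thm:main}. The only genuine work is in the construction of the reference operator $K_0$ with the correct spectral properties (Assumption A) and in checking that $\triangle_g$ is a function of $K_0$ of the right order (Assumption B); once this is done, the family $\cA_m := \{$pseudodifferential operators with symbol in $S^m_{\rm cl}(M)\}$ from Definition~\ref{pseudo} satisfies Assumptions I and II by classical results of H\"ormander and the Egorov theorem (these are exactly the properties the abstract algebra was designed to abstract).

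\emph{Step 1: construction of $K_0$.} The key classical fact about a Zoll manifold is that the eigenvalues of $\sqrt{\triangle_g}$ cluster, with exponential precision, around an arithmetic progression: there is a constant $\beta$ (related to the common period $2\pi$ and the Maslov index of the geodesics) such that, after conjugation by a suitable elliptic zeroth-order pseudodifferential operator, the spectrum of $\sqrt{\triangle_g}$ is contained in $\N + \beta + S^{-\infty}$ corrections. More precisely, by the Colin de Verdi\`ere / Weinstein normal form (see also Sect.~\ref{zoll.app}), one can construct a selfadjoint positive elliptic pseudodifferential operator $K_0$ of order $1$, differing from $\sqrt{\triangle_g}$ by an operator of order $-1$, whose spectrum is exactly contained in $\N + \lambda$ for some $\lambda > 0$. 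This operator $K_0$ is what we take as reference operator; since it differs from $\sqrt{1+\triangle_g}$ by a bounded (indeed order $-1$) operator, the scale $\cH^r = D(K_0^r)$ coincides with the standard Sobolev scale $H^r(M)$, and $\|\psi\|_r$ is equivalent to the usual Sobolev norm. Assumption~A holds by construction.

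\emph{Step 2: $H_0 = f(K_0)$.} We must exhibit an elliptic classical symbol $f$ of order $\mu = 2 > 1$ with $\triangle_g = f(K_0)$. Since $K_0^2$ and $\triangle_g$ are both selfadjoint positive elliptic operators of order $2$ differing (by Step~1 and symbol calculus) by a lower order operator, one writes $\triangle_g = K_0^2 + R$ with $R$ of order $1$; one then iteratively corrects $K_0$ (absorbing the lower order terms into the definition of $f$, using the functional calculus for $K_0$ together with the fact that $K_0$ has arithmetic spectrum so that $g(K_0)$ makes sense and is pseudodifferential for $g \in S^\rho$, cf.\ Lemma~\ref{ref:fc}) to arrange $\triangle_g = f(K_0)$ exactly, with $f(x) = x^2 + O(x)$, $f \in S^2_+$. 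This is Assumption~B with $\mu = 2$.

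\emph{Step 3: conclusion.} With $\cA_m$ as in Definition~\ref{pseudo}, Assumption~I (grading, algebra and Lie-algebra structure with the commutator gaining one order, closedness under smoothing perturbations, stability under adjoints) is the standard symbolic calculus on a compact manifold, with the semi-norms taken to be the symbol semi-norms in $S^m_{\rm cl}(M)$ (these are finer than the operator norms $\|\cdot\|_{m,s}$, as required in footnote to Assumption~I(iv)); Assumption~II is the Egorov theorem, which on a Zoll manifold holds globally in $\tau$ precisely because the geodesic flow is $2\pi$-periodic, so $e^{\im\tau K_0} A e^{-\im\tau K_0}$ stays in $\cA_m$ with semi-norms bounded uniformly in $\tau \in \R$. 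The hypothesis on $V(t)$ — symmetric, with symbol $v \in C^\infty_b(\R; S^\rho_{\rm cl}(M))$ and $\rho < 2 = \mu$ — is exactly \eqref{V.cond}. Theorem~\ref{thm:main} then yields the existence of the propagator $\cU(t,s) \in \cL(\cH^r) = \cL(H^r(M))$ and the bound \eqref{pr1}.

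\emph{Main obstacle.} The one nontrivial point is Step~1--2: producing $K_0$ with genuinely arithmetic (not merely asymptotically clustered) spectrum and writing $\triangle_g$ exactly as $f(K_0)$. This rests on the Zoll structure and the Colin de Verdi\`ere--Weinstein quantum normal form, and is where the hypothesis ``Zoll'' is essential; everything else is bookkeeping within the standard pseudodifferential calculus.
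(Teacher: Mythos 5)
The gap is in your Step~2, where you try to arrange $\triangle_g = f(K_0)$ \emph{exactly}. This is not in general possible: even though $Q$ commutes with $\triangle_g$, the operator $\triangle_g$ need not lie in the commutative algebra generated by $K_0 = \sqrt{\triangle_g}+Q$. Concretely, if $\triangle_g$ has a multiple eigenvalue $\mu$ on which $Q$ acts with two distinct eigenvalues $q_1\neq q_2$, then $K_0$ takes the two distinct values $\sqrt{\mu}+q_1$ and $\sqrt{\mu}+q_2$ on subspaces of that eigenspace, so any $f$ with $f(K_0)=\triangle_g$ is over-constrained (and would have to collide with the values forced by other eigenvalues of $\triangle_g$). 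There is no ``iterative correction of $K_0$'' that resolves this; one cannot absorb the discrepancy ``into the definition of $f$'' by functional calculus.

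The paper sidesteps this entirely, and more cheaply: it does \emph{not} take $H_0=\triangle_g$. Instead it sets $H_0:=K_0^2$, which is tautologically $f(K_0)$ with $f(x)=x^2\in S^2_+$ (so Assumption~B holds with $\mu=2$), and then observes
\[
\triangle_g = K_0^2 - \left(2Q\sqrt{\triangle_g}+Q^2\right) = H_0 - Q_0\ ,\qquad Q_0\in\cA_0\ ,
\]
so that the original equation $\im\partial_t\psi = \triangle_g\psi + V(t)\psi$ is rewritten as $\im\partial_t\psi = H_0\psi+\tilde V(t)\psi$ with $\tilde V(t):=V(t)-Q_0\in C^\infty_b(\R,\cA_{\max(\rho,0)})$, and $\max(\rho,0)<2=\mu$. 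The discrepancy between $\triangle_g$ and a function of $K_0$ is thus pushed into the perturbation rather than into $f$, and Theorem~\ref{thm:main} applies directly. Your Steps~1 and~3 agree with the paper; Step~2 should be replaced by this redefinition of $H_0$ and the corresponding modification $V\mapsto\tilde V=V-Q_0$.
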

    
      \paragraph*{Anharmonic oscillators on $\R$.}
      The  second application concerns one dimensional quantum anharmonic oscillators 
\begin{equation}
\label{an.os}
\im \partial_t \psi = H_{k,l} \psi + V(t) \psi \ , \qquad x \in \R \ ,
\end{equation}      
      where $H_{k,l}$ is the     
       one  degree  of freedom Hamiltonian
  \begin{equation}
\label{Hkl}H_{k,l}:= D_x^{2l} + ax^{2k} \ , \qquad k, l \in \N \ ,
\ \ \ k+l \geq 3 \ , \ \ \ a >0  \ .
\end{equation}  
      Here   $D_x:=\im^{-1}\partial_x$. 
       It is well known that $H_{k,\ell}$ is essentially self-adjoint in $L^2(\R)$ \cite{hero}. \\
            Define the Sobolev spaces $\cH^r: ={\rm Dom}(H_{k,l}^{\frac{k+l}{2kl}r})$  for $r\geq 0$.
            We 
            define now   suitable operator classes  for the perturbation. 
            Denote 
            $$\tk_0(x,\xi) := (1+x^{2k}+\xi^{2l})^{\frac{k+l}{2k l}} \ . $$
\begin{definition}
\label{symbol.ao}
A function $f$ will be called a symbol of order  $\r\in\R$ if  $f \in C^\infty(\R_x \times \R_\xi)$ and 
               $\forall \alpha, \beta \in \N$, there exists $C_{\alpha, \beta} >0$ s.t. 
\beq
\label{es.7}
             \vert \partial_x^\alpha \, \partial_\xi^\beta f( x,\xi)\vert \leq C_{\alpha,\beta} \ \tk_0(x,\xi)^{\r-\frac{k\beta +l\alpha}{k+l}}  \ . 
             \eeq
             We will write $f \in S^\r_{\ao}$.
\end{definition}
As usual to a symbol $f \in S^\rho_\ao$ we associate the operator
$f(x, D_x)$ which is obtained by standard Weyl quantization (see formula
\eqref{weyl} below).
\begin{definition}
\label{pseudo.an}
  We say that $F\in \cA_\rho$ if it is a pseudodifferential operator
  with symbol of class $S^\r_{\ao}$, i.e., if there
  exist $f \in S^\r_{\ao}$ and $S$ smoothing (in the sense of
  Definition \ref{smoothing}) such that $F = f(x, D_x) + S$.
\end{definition}
In this case the seminorms are defined by
$$
\wp^\rho_j(F):=\sum_{|\alpha|+|\beta|\leq j}C_{\alpha\beta}\ ,
$$
with $C_{\alpha\beta}$ the smallest constants s.t. eq. \eqref{es.7} holds.
If a symbol $f$ depends on additional parameters (e.g. it is time dependent), we ask that the constants $C_{\alpha, \beta}$ are uniform w.r.t. such parameters.

\begin{remark}
With this definition of symbols, one has  $x \in S^{\frac{l}{k+l}}_\ao$, $\xi \in S^{\frac{k}{k+l}}_\ao$, $x^{2k} + \xi^{2l} \in S^{\frac{2kl}{k+l}}_\ao$, 
$\tk_0(x,\xi) \in S^{1}_\ao$.
\end{remark}

          We get the following:   

       \begin{corollary}[1-D anharmonic oscillators] 
       \label{Anha}
       Consider equation \eqref{an.os} with the assumption \eqref{Hkl}.
       Assume also that $V\in C^\infty_b(\R;\cA_\rho)$ with $\rho< \frac{2kl}{k+l}$.   Then   the propagator $\cU(t,s)$   generated by $H(t)= H_{k,l} + V(t)$ is well  defined  and   satisfies \eqref{pr1}.
       \end{corollary}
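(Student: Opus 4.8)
The plan is to obtain Corollary \ref{Anha} as a direct instance of Theorem \ref{thm:main}, taking $H_0=H_{k,l}$ and letting $\cA=\bigcup_\rho\cA_\rho$ be the algebra of Definition \ref{pseudo.an} (Weyl quantizations of symbols in $S^\rho_\ao$, modulo smoothing operators). Thus the work splits into: producing a reference operator $K_0$ for which Assumptions A and B hold; checking that $\{\cA_\rho\}_\rho$ satisfies Assumption I and the Egorov property Assumption II; and observing that the hypothesis $\rho<\tfrac{2kl}{k+l}$ is precisely the condition $\rho<\mu$ of \eqref{V.cond}. Theorem \ref{thm:main} then delivers at once the existence of the propagator in $\cL(\cH^r)$ and the bound \eqref{pr1}.

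For the reference operator, note first that the Weyl quantization of $\xi^{2l}+ax^{2k}$ is exactly $H_{k,l}$, that $\xi^{2l}+ax^{2k}\in S^\mu_\ao$ with $\mu:=\tfrac{2kl}{k+l}$ — so $H_{k,l}\in\cA_\mu$ — and that $\mu>1$ precisely because $k+l\geq 3$ (the excluded value $k=l=1$ would give $\mu=1$); since $a>0$ the symbol is elliptic, comparable to $\tk_0^\mu$ at infinity. Hence $H_{k,l}$ is elliptic, self-adjoint and positive, with discrete spectrum $\mu_0<\mu_1<\cdots\to+\infty$. I then define $K_0$ by $K_0u_n:=(n+\lambda)u_n$ on the eigenbasis $(u_n)_n$ of $H_{k,l}$, so that ${\rm spec}(K_0)=\N+\lambda$ (Assumption A) and $H_{k,l}=f(K_0)$ with $f(n+\lambda):=\mu_n$. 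It then remains to show that $f$ extends to an elliptic classical symbol of order $\mu$ in the sense of Definition \ref{def:cl.sy} (Assumption B): here the spectral theory of one-dimensional anharmonic oscillators enters, via the global action--angle reduction of the classical Hamiltonian $\xi^{2l}+ax^{2k}$ — whose action variable is a constant times $E^{1/\mu}$ — together with the Bohr--Sommerfeld asymptotic expansion of the $\mu_n$, which supplies the complete expansion of $f$ with all the derivative bounds. Finally one records $\cH^r={\rm Dom}(K_0^r)={\rm Dom}(H_{k,l}^{r/\mu})$, matching the definition of $\cH^r$ in the statement.

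It remains to verify the abstract structural hypotheses. For Assumption I, items (iii)--(iv) amount to the symbolic calculus for the Weyl--H\"ormander metric $g_{(x,\xi)}=\tk_0^{-2l/(k+l)}dx^2+\tk_0^{-2k/(k+l)}d\xi^2$, which one checks to be slowly varying, temperate and $\sigma$-temperate; composition then maps $\cA_m\times\cA_n$ continuously into $\cA_{m+n}$ and the commutator gains exactly one order, consistently with $x\in S^{l/(k+l)}_\ao$, $\xi\in S^{k/(k+l)}_\ao$, $\{x,\xi\}=1\in S^0_\ao$, the continuity holding for the seminorms $\wp^\rho_j$ introduced right after Definition \ref{pseudo.an}. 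Item (v) is built into the definition of $\cA_\rho$; item (vi) holds since the Weyl quantization of $\bar f$ is the adjoint of that of $f$ and $S^\rho_\ao$ is stable under complex conjugation; item (i) follows from the pseudodifferential functional calculus of Helffer--Robert type: $K_0=f^{-1}(H_{k,l})$ with $f^{-1}$ a classical elliptic symbol of order $1/\mu$, so $K_0\in\cA_1$ is elliptic and $K_0^m\in\cA_m$ for all $m$. As for Assumption II, since $K_0$ is, modulo lower order, the quantization of the action variable of $\xi^{2l}+ax^{2k}$, whose angle flow is $2\pi$-periodic (all energy curves being closed), the Egorov theorem writes $\e^{\im \tau K_0}\,A\,\e^{-\im \tau K_0}$ as the quantization of the symbol of $A$ pushed forward along a smooth, periodically hence uniformly bounded family of canonical maps; this preserves $S^\rho_\ao$ with seminorm bounds uniform in $\tau$, whence $\tau\mapsto\e^{\im \tau K_0}\,A\,\e^{-\im \tau K_0}\in C^0_b(\R,\cA_\rho)$. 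Alternatively one invokes the weakened form of Assumption II from Appendix \ref{AET}.

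With Assumptions A, B, I, II in force, and with $V(t)$ symmetric and of class $C_b^\infty(\R,\cA_\rho)$, $\rho<\mu$ — that is, \eqref{V.cond} — Theorem \ref{thm:main} applies and yields $\cU(t,s)\in\cL(\cH^r)$ for every $r\in\R$ together with \eqref{pr1}, which is the claim of Corollary \ref{Anha}. I expect the main obstacle to be the construction of $K_0$ in the second step: proving that the eigenvalue function $f$ is genuinely a \emph{classical elliptic symbol of order $\mu$} — i.e.\ controlling every derivative of the full eigenvalue asymptotics of $H_{k,l}$ — together with the attendant identification $K_0\in\cA_1$ through functional calculus. Establishing the sharp version of Assumption II is the other delicate point, although it can be sidestepped via its weakened form.
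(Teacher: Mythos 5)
Your overall strategy is the same as the paper's: apply Theorem~\ref{thm:main}, with the reference operator $K_0$ built from the Bohr--Sommerfeld spectral analysis of $H_{k,l}$ carried out by Helffer--Robert \cite{hero}, and with $\cA_m$ the pseudodifferential classes of Definition~\ref{pseudo.an}; checking Assumptions~I and~II for these classes is, as you say, classical. Where you genuinely diverge from the paper is in the verification of Assumption~B, and there your route is harder for no gain. You insist on $H_0=H_{k,l}$, which forces you to prove that the \emph{exact} eigenvalue function $f$, characterized by $f(n+\lambda)=\mu_n$, extends to a classical elliptic symbol of order $\mu$ in the sense of Definition~\ref{def:cl.sy}. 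You flag this yourself as the main obstacle, and it is a real one: the Bohr--Sommerfeld rule determines $f$ only modulo $S^{-\infty}$, and passing from an asymptotic expansion of the sequence $(\mu_n)_n$ to a smooth function on $\R$ with the full ladder of derivative bounds of Definition~\ref{def:cl.sy} requires an interpolation argument that is not automatic. The paper sidesteps this completely: it sets $K_0:=H_{k,l}^{(k+l)/2kl}+Q$ (with $Q\in\cA_{-1}$ the correction from \cite{hero}, so $K_0\in\cA_1$ immediately by Assumption~I) and then defines $H_0:=K_0^{2kl/(k+l)}$, for which Assumption~B holds trivially with $f(x)=x^\mu$. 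The discrepancy $Q_0:=H_{k,l}-H_0$ is a pseudodifferential operator of order $\mu-2$, which is absorbed into the perturbation by writing $H(t)=H_0+\tilde V(t)$ with $\tilde V:=V+Q_0\in C^\infty_b(\R,\cA_{\max(\rho,\mu-2)})$, and $\max(\rho,\mu-2)<\mu$ still, so \eqref{V.cond} is satisfied.

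The same remark applies to your spectral definition of $K_0$ (by $K_0u_n=(n+\lambda)u_n$): it leaves you with the separate task of proving $K_0\in\cA_1$, which you propose to do via functional calculus applied to $f^{-1}$ --- but this again requires the classical-symbol property of $f$ that you are trying to establish, so the argument is circular as written. Using the operator $Q$ from \cite{hero} to \emph{define} $K_0$ breaks the circle. In short: your proposal identifies the right ingredients but concentrates all the difficulty into proving Assumption~B for a nontrivial $f$, whereas the paper's reparametrization $H_0:=K_0^\mu$, $\tilde V:=V+Q_0$ makes Assumption~B trivial and moves the lower-order discrepancy where it belongs --- into the perturbation, which the theorem is built to handle.
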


\noindent An  example of admissible perturbation is   $\displaystyle{V(t,x,\xi) = \sum_{l \alpha+k\beta <2kl }a_{\alpha,\beta}(t)x^\alpha\xi^\beta}$
        with $a_{\alpha,\beta}\in C_b^\infty(\R, \R)$. In particular if we choose $H_0=-\frac{d^2}{dx^2}+x^4$, we can consider unbounded perturbations of the form $x^3 g(t)$ and of course also $x g(t)$  with $g \in C_b^\infty(\R, \R)$.

\begin{remark}
          \label{red.2}
Our class of perturbations contains quite general
pseudodifferential operators, however it is easy to see that
multiplication operators (i.e. operators independent of $\partial_x$)
must be polynomials in $x$ with coefficients which are possibly time
dependent.

In the similar problem of reducibility more general classes of
perturbations have been treated in \cite{Bam17b}. We did not try
to push the result in that direction. This is probably non trivial in
an abstract framework like the one we are using here.
\end{remark}

\begin{remark}
  \label{maxim}
We think that our method should also allow to deal with some
perturbations of the same order as the main term.
For example it should be treatable the case
where $V$ is a quasihomogeneous polynomial of maximal order fulfilling
some sign condition (more or less as in Theorem 2.12 of
\cite{Bam16I}).
\end{remark}

\subsection{Perturbations of systems of order 1}

In order to deal with perturbations of operators of order 1 we have to
restrict to the case where the dependence of the perturbation on time
is quasiperiodic.

Let $\cA:=\cup_{m\in\R}\cA_m$ be a graded Lie algebra satisfying {\bf
  Assumption I } with a reference operator $K_0$.  \\ Let $K_1,K_2,
\cdots, K_{d}$ be $d$ self-adjoint positive operators such that
$K_j\in\cA_1$, $\forall 1 \leq j \leq d$. Assume the following modified
Assumption II:\\

    \noindent 
        {\bf Assumption II$^\prime$:}  
              \begin{itemize}
       \item[(i)] $[K_j,K_\ell]=0$ for any $0\leq j,\ell\leq d$. 
       \item[(ii)] Denote $K=(K_1,\cdots, K_{d})$ and for
         $\tau\in\R^{d}$, $\displaystyle{\tau\cdot K:=\sum_{1\leq
           j\leq d} \tau_j K_j}$.  \\ Then for any $A\in\cA_m$,  the map 
         $\tau\mapsto A(\tau):={\rm e}^{\im \tau\cdot K}A{\rm e}^{-\im \tau\cdot K}\in
         C^{\infty}_b(\R^d ;\cA_m)$.
       \end{itemize}

\begin{remark}
                \label{ex.iip}
For  any $B\in\cA_{n}$, for any $\ell\in\N$, one has  ${\rm
  ad}_{A(s)}^\ell(B)\in$  $C^{\infty}_b(\R^d ;\cA_{n + \ell(m-1)})$.
               \end{remark}
       
       We also adapt our spectral conditions:\\
 
         \noindent
        {\bf Assumption A$^\prime$:} 
          $K=( K_1,\cdots, K_{d})$  has an entire  joint  spectrum, ${\rm spec}(K) \subseteq \N^{d}+\lambda$  for some $\lambda\in \R^{d}$,  $\lambda \geq 0$.\\
          
          \noindent  {\bf Assumption B$^\prime$:} 
There exist $\left\{\nu_j\right\}_{j=1}^d $, $\nu_j>0$  s.t.  
\begin{align}
  \label{II.1}
  H_0= \sum_{1\leq j\leq d}\nu_j K_j \  ,
  \\
  K_0=H_0 \ .
\end{align}

In order to fix ideas one can think of the case of Harmonic
oscillators, in which $K_j=-\partial_j^2 +x_j^2$, $1\leq j\leq
d$.

\begin{remark}
  \label{pos}
Since the operators $K_j$ are positive, the norm $\norma{.}_r$ defined
using the operator $K_0$ is equivalent to the norm defined using the
operator $K_0':=\sum_{j=1}^dK_j$. 
\end{remark}

We consider both the case where $$\nu:=(\nu_1,...,\nu_d)$$ is
resonant and the case where it is nonresonant.
% Moreover, in the
%resonant case we have to distinguish between the ``positive'' and the
%``non positive'' cases. To specify precisely what we mean we need to
%analyze more in details the structure of the frequency vector $\nu$.
%
To state the arithmetical assumptions on $\nu$, we first recall the
following well known lemma whose scheme of proof will be recalled in the
Appendix \ref{tilde}.

\begin{lemma}
  \label{lem.gio}
There exists $\tilde d\leq d $, a vector $\tilde \nu \in \R^{\tilde
  d}$ with components independent over the rationals, and vectors
$\bv_j\in \Z^d$, $j=1,...,\tilde d$ such that
\begin{equation}
  \label{vtilde}
\nu = \sum_{j=1}^{\tilde d} \tilde \nu_j \, \bv_j \ .
\end{equation}
\end{lemma}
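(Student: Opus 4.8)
The plan is to reduce the statement to an elementary fact about the $\mathbb{Q}$-vector space generated by the components of $\nu$. Consider the $\mathbb{Q}$-vector subspace $W\subseteq\R$ generated by $\nu_1,\dots,\nu_d$ and set $\tilde d:=\dim_{\mathbb{Q}}W$; clearly $\tilde d\leq d$, since $W$ is spanned by $d$ elements. From the spanning set $\{\nu_1,\dots,\nu_d\}$ one can extract a $\mathbb{Q}$-basis of $W$, say $\nu_{i_1},\dots,\nu_{i_{\tilde d}}$. By construction these numbers are independent over $\mathbb{Q}$, and every component of $\nu$ can be written as $\nu_i=\sum_{k=1}^{\tilde d} q_{ik}\,\nu_{i_k}$ with $q_{ik}\in\mathbb{Q}$.

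The only point requiring care is that \eqref{vtilde} asks for the coefficient vectors to lie in $\Z^d$ rather than in $\mathbb{Q}^d$; this is handled by clearing denominators. Let $D\in\N$ be a common denominator of all the rationals $q_{ik}$, so that $Dq_{ik}\in\Z$ for every $i,k$, and define
\[
\tilde\nu_k:=\frac{1}{D}\,\nu_{i_k}\ ,\qquad \bv_k:=\bigl(Dq_{1k},\,Dq_{2k},\,\dots,\,Dq_{dk}\bigr)\in\Z^d\ ,\qquad k=1,\dots,\tilde d\ .
\]
Then, by the expansion of each $\nu_i$ above, the $i$-th component of $\sum_{k=1}^{\tilde d}\tilde\nu_k\,\bv_k$ equals $\sum_k (\nu_{i_k}/D)(Dq_{ik})=\sum_k q_{ik}\nu_{i_k}=\nu_i$, which is exactly \eqref{vtilde}.

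It remains to check that the rescaled numbers $\tilde\nu_1,\dots,\tilde\nu_{\tilde d}$ are still independent over the rationals: if $\sum_{k} r_k\,\tilde\nu_k=0$ with $r_k\in\mathbb{Q}$, then $\sum_k (r_k/D)\,\nu_{i_k}=0$, and since $\{\nu_{i_1},\dots,\nu_{i_{\tilde d}}\}$ is a $\mathbb{Q}$-basis of $W$ this forces $r_k/D=0$, hence $r_k=0$, for all $k$. This proves the lemma. There is no genuine obstacle here: the argument is pure linear algebra over $\mathbb{Q}$, the only mildly delicate bookkeeping being the passage from $\mathbb{Q}^d$ to $\Z^d$ while preserving independence over $\mathbb{Q}$, which the rescaling $\nu_{i_k}\mapsto\nu_{i_k}/D$ achieves simultaneously.
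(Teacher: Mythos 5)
Your proof is correct. You construct $\tilde d$ as the $\mathbb{Q}$-dimension of the rational span $W$ of $\nu_1,\dots,\nu_d$, extract a $\mathbb{Q}$-basis from the components of $\nu$ themselves, and handle the $\Z^d$ constraint by a global denominator clearing $1/D$, which you correctly check preserves rational independence. This $\tilde d$ agrees with the paper's (the $\mathbb{Q}$-dimension of $W$ equals $d-\dim\cM_\nu$ by rank--nullity applied to $q\mapsto\nu\cdot q$ on $\mathbb{Q}^d$), so the numbers match.

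The route is genuinely different from the paper's. The paper argues via geometry of numbers: it introduces the resonance module $\cM_\nu=\{k\in\Z^d:\nu\cdot k=0\}$, picks a lattice basis $\be_1,\dots,\be_r$ of $\cM_\nu$, invokes the nontrivial fact (from Siegel/Giorgilli) that such a primitive system extends to a unimodular basis $\be_1,\dots,\be_r,\bu_1,\dots,\bu_{\tilde d}$ of $\Z^d$, and takes $\bv_j$ to be the last $\tilde d$ columns of the inverse of the resulting unimodular matrix. Your argument avoids all of this and is purely linear algebra over $\mathbb{Q}$, which makes it shorter and more self-contained. What the paper's heavier construction buys is a sharper structural statement: its $\bv_j$ form part of a $\Z$-basis of $\Z^d$, and its $\tilde\nu$ is determined up to unimodular integer transformations. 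That extra structure is what justifies the subsequent remark in the paper that ``the vector $\tilde\nu$ is defined up to linear combinations with integer coefficients'' and keeps the lattice $\tilde\nu\cdot\Z^{\tilde d}$ canonical; your $\tilde\nu$ differs from the paper's by the rational rescaling $1/D$, so you would obtain a different (but equally legitimate) normalization, and that remark would need to be adjusted accordingly. For the lemma as literally stated, and for its role in solving the second homological equation (where only $\bv_j\in\Z^d$, the decomposition \eqref{vtilde}, and the rational independence of $\tilde\nu$ are used, together with the Diophantine hypothesis \eqref{non.res3} imposed on whatever $\tilde\nu$ one has chosen), your proof is entirely adequate.
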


%\begin{lemma}
%  \label{lem.gio}
%There is a unimodular\footnote{A matrix $M$ is unimodular if it has integer entries and $\det M = \pm 1$} matrix $M$  with
%integer entries such that $\nu=M\bar \nu$ and $\bar \nu$ has
%exactly $\dim \cM_\nu$ vanishing components, while the remaining
%$\tilde d:=d-\dim \cM_\nu $ components form a vector with components
%which are independent over $\Z^{\tilde d}$.
%\end{lemma}

%According to the previous Lemma, there exists a nonresonant vector
%$\tilde \nu\in\R^{\tilde d}$ s.t.
%\begin{equation}
%  \label{tilde}
%\bar \nu=(\tilde \nu,0)\ .
%\end{equation}
%
%\begin{definition}
%  \label{elliptic}
%We will say that the frequency vector is \emph{irrationally
%  semidefinite} if there exists $\tilde f\in\R^{\tilde d}$ s.t. the
%vector
%\begin{equation}
%  \label{positive}
%f\equiv (f_1,...,f_d):=\sum_{j=1}^{\tilde d} \tilde f_jM\be_j
%\end{equation}
%has all positive components:
%\begin{equation}
%  \label{pos.f}
%f_j\geq 0\ ,\quad \forall j=1,...,d \ .
%\end{equation}
%\end{definition}

\begin{remark}
  \label{deg}
  For example\\
(i) if $\nu$ is nonresonant, then $\tilde \nu = \nu$ and $\bv_j =
  \be_j$, the standard basis of $\R^d$;\\
   (ii) if  $\nu$ is completely resonant then $\tilde d =1$; e.g. if $ \nu=(1, \ldots, 1)$, then $\tilde \nu_1 =1$, $ \bv_1 = (1, \ldots, 1) $. 
\end{remark}

\begin{theorem}\label{thm:hosc}
Assume that $V(t)=W(\omega t)$ with $W\in C^\infty_b(\T^n, \cA_\r)$ a
quasi-periodic operator of order $\rho<1$.  Assume furthermore that 
$(\tilde \nu,\omega) \in\R^{\tilde d+n}$ is a Diophantine
vector, namely that there exist $\gamma>0,$ and $ \kappa\in\R$ s.t., 
\begin{equation}
\label{non.res3}
\abs{\omega \cdot k + \tilde \nu \cdot \ell } \geq
\frac{\gamma}{(|\ell\vert +\vert k|)^\kappa}\ , \quad
0\not =(k,\ell)\in\Z^{n+\tilde d}\ .
\end{equation}
Then the propagator $\cU(t,s)$ generated by $H(t)= \nu\cdot K +
W(\omega t)$ exists and satisfies
\eqref{pr1}.
\end{theorem}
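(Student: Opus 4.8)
I would follow the scheme announced in the Introduction: conjugate $H(t)=\nu\cdot K+W(\omega t)$, through a finite sequence of time-dependent unitary transformations, to a Hamiltonian of the form $H_0+Z^{(N)}(t)+V^{(N)}(t)$ with $[H_0,Z^{(N)}]=0$ and $V^{(N)}$ an $N$-smoothing operator, and then invoke Theorem 1.5 of \cite{MaRo}. The only genuinely new ingredient with respect to Theorem \ref{thm:main} is the way the homological equation is solved, via a double Fourier expansion. By Assumption A$'$ the flow $\tau\mapsto e^{\im\tau\cdot K}$ is $2\pi$-periodic in each $\tau_j$ up to a scalar phase, so for $A\in\cA_m$ Assumption II$'$ gives $e^{\im\tau\cdot K}A\,e^{-\im\tau\cdot K}=\sum_{k\in\Z^d}\widehat A_k\,e^{\im k\cdot\tau}$, with $\widehat A_k\in\cA_m$, $[K_j,\widehat A_k]=k_j\widehat A_k$, and the $\widehat A_k$ decaying faster than any power of $|k|$ in every seminorm of $\cA_m$ (because the map lies in $C^\infty_b(\R^d;\cA_m)$). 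Combining this with $W(\theta)=\sum_{\ell\in\Z^n}\widehat W_\ell\,e^{\im\ell\cdot\theta}$, which decays rapidly in $\ell$ since $W\in C^\infty_b(\T^n;\cA_\rho)$, the perturbation acquires a joint expansion $V(t)=W(\omega t)=\sum_{\ell,k}V_{\ell,k}\,e^{\im\ell\cdot\omega t}$ with $V_{\ell,k}\in\cA_\rho$, $[H_0,V_{\ell,k}]=(\nu\cdot k)\,V_{\ell,k}$, and $\{V_{\ell,k}\}$ rapidly decaying in $(\ell,k)$.

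One normal-form step: conjugating by $\psi=e^{-\im\chi(t)}\phi$ turns $H_0+V(t)$ into $H_0+Z(t)+V_+(t)$, where $\chi$ solves the homological equation $\dot\chi-\im[\chi,H_0]=V-Z$ and $V_+$ collects the iterated commutators ${\rm ad}_\chi^\ell$, $\ell\ge1$, applied to $H_0$, $V$ and $\dot\chi$. Expanding $\chi(t)=\sum_{\ell,k}\chi_{\ell,k}e^{\im\ell\cdot\omega t}$ with $[H_0,\chi_{\ell,k}]=(\nu\cdot k)\chi_{\ell,k}$, the equation decouples mode by mode into $\im(\ell\cdot\omega-\nu\cdot k)\chi_{\ell,k}=V_{\ell,k}$: I would put into $Z$ precisely the modes with $\ell\cdot\omega=\nu\cdot k$ and solve for the others. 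The decisive arithmetical point is that $\ell\cdot\omega=\nu\cdot k$ with $\nu\cdot k\neq0$ is impossible: writing $\nu=\sum_{j}\tilde\nu_j\bv_j$ as in Lemma \ref{lem.gio}, one has $\nu\cdot k=\tilde\nu\cdot m$ with $m=(\bv_1\cdot k,\dots,\bv_{\tilde d}\cdot k)\in\Z^{\tilde d}$ and $|m|\le C|k|$; since $\tilde\nu$ has rationally independent components, $\nu\cdot k\neq0\Leftrightarrow m\neq0$, whence \eqref{non.res3} gives $|\ell\cdot\omega-\nu\cdot k|=|(\omega,\tilde\nu)\cdot(\ell,-m)|\ge\gamma(|\ell|+|m|)^{-\kappa}\ge\gamma'(|\ell|+|k|)^{-\kappa}$. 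Therefore every resonant mode has $\nu\cdot k=0$, so $[H_0,Z]=0$, and — since $\omega$ is rationally independent — also $\ell=0$, so $Z$ is in fact time-independent; on the non-resonant modes the small divisors cost only a fixed power of $(|\ell|+|k|)$, which the rapid decay of $V_{\ell,k}$ absorbs, so $\chi\in C^\infty_b(\R;\cA_\rho)$, of the form $\widetilde W(\omega t)$, and symmetric because $V$ and $Z$ are. Since $\rho<1$, a standard commutator argument (using $[K_0^r,\chi]K_0^{-r}\in\cA_{\rho-1}\subseteq\cL(\cH^0)$ and Gronwall) shows $e^{\pm\im\chi}\in\bigcap_{r}\cL(\cH^r)$, and $V_+\in C^\infty_b(\R;\cA_{\rho'})$ with $\rho'=\rho-(1-\rho)<\rho$, again quasiperiodic in $\omega t$.

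Iteration and conclusion: applying the step to $H_0+Z_1+\dots+Z_j+V_j$ produces $Z_{j+1}$ (time-independent, commuting with $H_0$) and $V_{j+1}\in C^\infty_b(\R;\cA_{\rho_{j+1}})$ with $\rho_{j+1}\le\rho_j-(1-\rho)$, the slowest new contribution being $\im[\chi_{j+1},Z_1]\in\cA_{\rho_j+\rho-1}$; hence $\rho_j\to-\infty$, and after finitely many steps $V^{(N)}:=V_N$ is $N$-smoothing (this is where Assumption I(v) enters), while $Z^{(N)}:=Z_1+\dots+Z_N\in\cA_\rho$ satisfies $[H_0,Z^{(N)}]=0$ and the composition $U^{(N)}(t):=e^{-\im\chi_1(t)}\cdots e^{-\im\chi_N(t)}$ lies in $\bigcap_r\cL(\cH^r)$. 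Since $K_0=H_0$, the operator $H_0+Z^{(N)}$ is self-adjoint and commutes with all spectral projectors of $K_0$, so the propagator it generates is isometric on each $\cH^r$; feeding this together with the $N$-smoothing $V^{(N)}(t)$ into Theorem 1.5 of \cite{MaRo} — equivalently, running the variation-of-constants bootstrap in which each use of $V^{(N)}$ gains $N$ derivatives, so $\|\cdot\|_r$ grows at most like $\la t-s\ra^{r/N}$ — yields \eqref{pr1} for the transformed equation as soon as $N>r/\epsilon$. Conjugating back by $U^{(N)}\in\cL(\cH^r)$ gives \eqref{pr1} for $\cU(t,s)$, whose existence and boundedness on each $\cH^r$ is obtained along the way.

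The main obstacle I expect is the regularity bookkeeping in the homological step: proving that dividing the rapidly decaying $V_{\ell,k}$ by the small divisors $\ell\cdot\omega-\nu\cdot k$ — controlled only through \eqref{non.res3} and Lemma \ref{lem.gio} — returns an operator in the \emph{same} Fréchet class $C^\infty_b(\T^n;\cA_\rho)$, uniformly along the iteration, and that this class is stable under the commutators and under the exponentials $e^{\pm\im\chi_j}$; this has to be extracted from the abstract Assumptions I and II$'$ alone, without the help of an explicit pseudodifferential calculus.
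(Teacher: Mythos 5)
Your proposal is essentially the paper's proof: conjugate by finitely many time-dependent unitaries generated by solutions of the quasiperiodic homological equation, solve that equation by a double Fourier expansion (in the time-angles $\theta\in\T^n$ and in the ``angles'' conjugate to the $K$-flow) with the Diophantine condition \eqref{non.res3} controlling the small divisors, and then invoke Theorem~1.5 of \cite{MaRo} once the remainder is $N$-smoothing. The only cosmetic difference is your choice of normal-form term: you retain only the genuinely resonant modes (where the divisor vanishes), which by \eqref{non.res3} forces $\ell=0$ and $\nu\cdot k=0$, so your $Z$ is time-independent; the paper instead subtracts the full average $\langle W\rangle$ over the $\tilde K$-torus (all $\ell=0$ modes), giving a time-dependent $Z^{(N)}$ that still commutes with $K_0$ and each $\tilde K_j$ — both choices are admissible since \cite{MaRo} only needs $[Z^{(N)},K_0]=0$. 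Your reduction of $k\in\Z^d$ to $m\in\Z^{\tilde d}$ via $m_j=\bv_j\cdot k$ with $|m|\le C|k|$ is exactly the role that the change to $\tilde K$ plays in the paper's Lemma~\ref{lem:hom2}.
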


\begin{remark}
The  vector $\tilde \nu$ is defined up to linear combinations with integer coefficients; clearly  condition \eqref{non.res3} does not depend on the choice of $\tilde \nu$.
\end{remark}

\begin{remark}
We recall that Diophantine vectors form a subset of $\R^{n+\tilde d}$ of full
measure if $\kappa>n+\tilde d-1$.
\end{remark}

\subsection{Applications (ii)}
\label{app.sec.2}
\paragraph{Relativistic Schr\"odinger equation on Zoll manifolds.}
We consider the reduced Dirac equation on a Zoll manifold $M$ with
mass $\mu>0$
$$
\im \partial_t \psi = \sqrt{\triangle_g +\mu} \ \psi + V(\omega t, x, D_x)\psi \ , \qquad t\in\R,\ x\in M \ .
$$ As in the case of the Schr\"odinger equation on Zoll manifolds,
$\cA_\r$ is the class of pseudodifferential operators with symbols in
$S^\r_{\rm cl}(M)$ (see Definition \ref{pseudo}).

In this case  $V$  is assumed to be quasi-periodic in time.

\begin{corollary}[Relativistic Schr\"odinger equation on Zoll manifolds] 
\label{cor:rs.zoll}
Assume that $V(t) = W(\omega t)$ with $W  \in C^\infty(\T^n, \cA_\rho)$ with $\rho < 1$. Assume
furthermore that the non resonance condition
\begin{equation}
  \label{non.res.re}
\left|\omega\cdot k+m\right|\geq \frac{\gamma}{1+|k|^\kappa}\ ,\quad
\forall 0\not=k\in\Z^n\ ,\quad \forall m\in\Z
\end{equation}
holds for some $\gamma>0$ and $\kappa$. Then the propagator $\cU(t,s)$
generated by $H(t)= \sqrt{\triangle_g +\mu} + W(\omega t)$ exists and satisfies
\eqref{pr1}.
\end{corollary}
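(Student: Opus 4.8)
The plan is to deduce Corollary \ref{cor:rs.zoll} from Theorem \ref{thm:hosc} with $d=1$, after replacing $\sqrt{\triangle_g+\mu}$ by a well-chosen reference operator of order one with arithmetic spectrum. First I would invoke the classical structure theory of Zoll manifolds already used for Corollary \ref{Zoll} (see Section \ref{zoll.app}): there exists an elliptic, selfadjoint, positive pseudodifferential operator $K_1$ on $M$ with the same principal symbol $|\xi|_g$ as $\sqrt{\triangle_g}$, such that ${\rm spec}(K_1)\subseteq\N+\lambda$ for a suitable $\lambda>0$, and such that $\sqrt{\triangle_g}-K_1\in\cA_{-1}$. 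Since $\sqrt{\triangle_g+\mu}-\sqrt{\triangle_g}$ has symbol $\sim\mu/(2|\xi|_g)$ and is therefore of order $-1$ as well, one gets
$\sqrt{\triangle_g+\mu}=K_1+R$, with $R\in\cA_{-1}$ selfadjoint.

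Next I would set $K=(K_1)$, $d=1$, $\nu_1=1$, $K_0=H_0=K_1$, and check the hypotheses of Theorem \ref{thm:hosc}. Assumption I is the statement, recalled after Definition \ref{pseudo}, that pseudodifferential operators with classical symbols on the compact manifold $M$ form a graded Lie algebra with $K_0^m\in\cA_m$ and $\cH^r=H^r(M)$; Assumption II$'$ reduces, for $d=1$, to part (ii), which is the Egorov theorem for pseudodifferential operators on a compact manifold (the flow ${\rm e}^{\im\tau K_1}$ quantizes the geodesic flow and preserves $S^m_{\rm cl}(M)$); Assumption A$'$ is ${\rm spec}(K_1)\subseteq\N+\lambda$; and Assumption B$'$ holds with $\nu_1=1$, $K_0=H_0=K_1$. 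In the notation of Lemma \ref{lem.gio} this gives $\tilde d=1$, $\tilde\nu=1$, $\bv_1=1$.

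Then I would rewrite the equation as $H(t)=\sqrt{\triangle_g+\mu}+W(\omega t)=K_1+\widetilde W(\omega t)$, where $\widetilde W(\theta):=W(\theta)+R$. Since $R$ is $\theta$-independent and selfadjoint, $\widetilde W\in C^\infty_b(\T^n,\cA_{\rho'})$ with $\rho'=\max(\rho,-1)<1$, and $\widetilde W(\theta)$ is symmetric for every $\theta$, so it is an admissible perturbation for Theorem \ref{thm:hosc}. It then remains to verify that the non-resonance hypothesis \eqref{non.res.re} implies the Diophantine condition \eqref{non.res3} for the vector $(\tilde\nu,\omega)=(1,\omega)\in\R^{1+n}$: for $k\neq0$ this is exactly \eqref{non.res.re} with $m=\ell$, after absorbing the comparison between $1+|k|^\kappa$ and $(|\ell|+|k|)^{\max(\kappa,1)}$ (valid since $|k|\geq1$) into new constants $\gamma,\kappa$; for $k=0$, $\ell\neq0$ one has $|\tilde\nu\cdot\ell|=|\ell|\geq1\geq\gamma/|\ell|^\kappa$ trivially. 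Theorem \ref{thm:hosc} then yields the existence of $\cU(t,s)$ and the bound \eqref{pr1}, which is the assertion of the corollary.

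The only genuinely nontrivial ingredient is the first step, namely the existence of $K_1$ with spectrum exactly in an arithmetic progression $\N+\lambda$ and agreeing with $\sqrt{\triangle_g}$ modulo order $-1$; this rests on the Weinstein--Colin de Verdière--Duistermaat--Guillemin analysis of the wave trace on Zoll manifolds, but it is already established and used for the Schrödinger case (Corollary \ref{Zoll}), so here it can be cited rather than reproved. Everything else amounts to observing that the mass term and the change of reference operator perturb only by operators of order $-1<1$, hence are absorbed into $\widetilde W$ without leaving the symbol class below order one, and to the elementary verification that \eqref{non.res.re} is equivalent to \eqref{non.res3} in the present one-frequency situation.
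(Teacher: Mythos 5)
Your proof is correct and follows essentially the same route as the paper's: define $K_0 = \sqrt{\triangle_g}+Q$ via the Colin de Verdi\`ere construction already used for Corollary \ref{Zoll}, absorb the order-$(-1)$ operators $Q$ and $\sqrt{\triangle_g+\mu}-\sqrt{\triangle_g}$ into a new quasi-periodic perturbation $\tilde V$, and then apply Theorem \ref{thm:hosc} with $d=1$, $K_1=K_0=H_0$, $\nu_1=1$. Your explicit check that \eqref{non.res.re} implies the Diophantine condition \eqref{non.res3} (splitting into the cases $k\neq 0$ and $k=0,\ \ell\neq 0$) is a bit more detailed than the paper's one-line remark but amounts to the same observation.
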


\paragraph{Harmonic oscillator in $\R^d$.} Consider the quantum Harmonic
oscillator
\begin{align}
  \label{qhosc.1}
  \im\partial_t\psi=H_\nu\psi+V(t)\psi\ ,\quad x\in\R^d
  \\
  \label{qhosc.2}
H_\nu:=-\Delta +\sum_{j=1}^d\nu_j^2 x_j^2\ ,\qquad
V(t)=W(\omega t,x,D_x)\ .
\end{align}
Here $W$ is the Weyl quantization of a symbol belonging to the following class
\begin{definition}
\label{symbol.ao2}
A function $f$ will be called a symbol of order  $\r\in\R$ if  $f \in C^\infty(\R^d_x \times \R^d_\xi)$ and 
               $\forall \alpha, \beta \in \N^d$, there exists $C_{\alpha, \beta} >0$ s.t. 
\beq
\label{es.71}
             \vert \partial_x^\alpha \, \partial_\xi^\beta f( x,\xi)\vert \leq C_{\alpha,\beta} \ (1 + |x|^2 + |\xi|^2)^{\r-\frac{|\beta| + |\alpha|}{2}}  \ . 
             \eeq
             We will write $f \in S^\r_{\hos}$.
\end{definition}

\noindent The class \eqref{es.71} is the extension to higher dimensions  of the class used in the anharmonic oscillators (see Definition \ref{symbol.ao}) and with  $k=l=1$.

\begin{remark} With our numerology, the symbol of the harmonic oscillator is of order 1, 
$|\xi|^2 + \sum_j \nu_j^2 x_j^2 \in S^1_\hos$, and not of order 2 as typically in the literature.
\end{remark}

The classes $\cA_m$ are defined as in Definition \ref{pseudo.an}, with
symbols in the class $S^m_\hos$.

 \begin{corollary}
   \label{cor:hosc}
Assume that $\nu$ is such that $\tilde \nu$ fulfills \eqref{non.res3}, and 
that $W\in C^\infty(\T^n;\cA_\rho)$ with $\rho<1$. Then the propagator
$\cU(t,s)$ of $H(t) = H_\nu + W(\omega t)$ exists and fulfills \eqref{pr1}.
 \end{corollary}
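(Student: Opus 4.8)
The plan is to derive Corollary \ref{cor:hosc} from the abstract Theorem \ref{thm:hosc} by verifying that the multidimensional harmonic oscillator setting fits the abstract framework of Section \ref{sec:alg}, with the modified Assumptions I, II$'$, A$'$, B$'$. First I would set $K_j := -\partial_{x_j}^2 + x_j^2$ for $1\le j\le d$ and $K_0 := H_0' = \sum_{j=1}^d K_j$ (the isotropic oscillator), and take $\cA_m$ to be the class of Weyl quantizations of symbols in $S^m_\hos$ modulo smoothing operators. One then checks Assumption I: ellipticity and the graded algebra/Lie-algebra structure of $S^m_\hos$ are classical facts of the Shubin (or "isotropic") pseudodifferential calculus — composition lowers order additively, the Moyal bracket of symbols of orders $m,n$ lies in $S^{m+n-1}_\hos$ because the leading term cancels, adjoints preserve the class, and closure under smoothing perturbations holds by construction (Definition \ref{pseudo.an}). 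This is essentially the one-dimensional verification underlying Corollary \ref{Anha}, specialized to $k=l=1$ and extended to $\R^d$ with $|\alpha|,|\beta|$ replacing $\alpha,\beta$; the work is routine once the $1$-D case is in place.

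Next I would verify Assumption II$'$. Part (i), $[K_j,K_\ell]=0$, is immediate since the $K_j$ act on distinct variables. Part (ii), the Egorov property, requires that $\tau \mapsto e^{\im \tau\cdot K} A \, e^{-\im \tau\cdot K}$ be a $C^\infty_b$ map from $\R^d$ into $\cA_m$: the flow of each $K_j$ is (up to the metaplectic sign) the harmonic rotation in the $(x_j,\xi_j)$ plane with period $2\pi$, hence the conjugated operator has symbol $f$ composed with a rigid rotation of the phase space, which manifestly preserves $S^m_\hos$ (the weight $1+|x|^2+|\xi|^2$ is rotation-invariant and derivatives transform by a bounded linear map), with seminorms bounded uniformly in $\tau$ and depending smoothly on $\tau$. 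By Remark \ref{ex.iip} this then gives the required smoothing of iterated commutators. Assumption A$'$ holds because the joint spectrum of $(K_1,\dots,K_d)$ is $2\N^d + (1,\dots,1) \subseteq \N^d + \lambda$ with $\lambda=(1,\dots,1)$ (after the harmless rescaling to make the spectrum integer-spaced, which only changes $\nu$ by a factor $2$); Assumption B$'$ is the statement $H_\nu = \sum_j \nu_j K_j$, which is exactly \eqref{qhosc.2}, together with the stipulation $K_0 = H_0$ — but here I would instead invoke Remark \ref{pos} to replace the anisotropic $H_\nu$ by the isotropic $K_0' = \sum_j K_j$ when defining the scale $\cH^r$, since that is what makes $\cH^r$ agree with the standard Shubin--Sobolev spaces and makes Assumption I transparent.

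Having checked all hypotheses, I would apply Theorem \ref{thm:hosc} directly: the perturbation $V(t) = W(\omega t)$ with $W \in C^\infty(\T^n;\cA_\rho)$, $\rho<1$, is quasiperiodic of order $\rho<1$ as required, and the Diophantine condition on $(\tilde\nu,\omega)$ is exactly the hypothesis \eqref{non.res3} assumed in the Corollary. The theorem then yields existence of the propagator $\cU(t,s) \in \cL(\cH^r)$ and the bound \eqref{pr1}. A final cosmetic step is to note that $\cH^r = \mathrm{Dom}(H_\nu^{r/2})$ and the Shubin scale $\mathrm{Dom}((K_0')^{r})$ coincide as normed spaces (Remark \ref{pos}), so \eqref{pr1} is the desired estimate on the genuine Sobolev/harmonic-oscillator norms.

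The main obstacle — really the only nontrivial point — is a bookkeeping one: reconciling the normalization in Assumptions A$'$ and B$'$ (integer-spaced joint spectrum, and $K_0 = H_0 = \sum \nu_j K_j$) with the concrete harmonic oscillator, whose natural building blocks $-\partial_{x_j}^2 + x_j^2$ have spectrum $2\N+1$ and which one prefers to scale into the scale using the isotropic sum rather than the anisotropic $H_\nu$. This is handled by the rescaling $K_j \rightsquigarrow \tfrac12 K_j$ (absorbing the factor into $\nu_j$ and $\lambda$) and by Remark \ref{pos}; none of it affects the Diophantine condition, which by the remark following Theorem \ref{thm:hosc} is insensitive to integer rescalings of $\tilde\nu$. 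Everything else is a direct citation of the standard isotropic symbol calculus and of Theorem \ref{thm:hosc}.
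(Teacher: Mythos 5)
Your plan is essentially the paper's proof: the paper dispatches Corollary \ref{cor:hosc} in a single sentence by observing that after rescaling $x_j \mapsto x_j/\sqrt{\nu_j}$ one has $H_\nu = \sum_j \nu_j(-\partial_j^2 + x_j^2) = \sum_j \nu_j K_j$, so the Corollary is an immediate application of Theorem \ref{thm:hosc} with $K_j = -\partial_{x_j}^2 + x_j^2$, the verification of Assumptions I, II$'$, A$'$, B$'$ being the routine $d$-dimensional (Shubin/isotropic) analogue of the $k=l=1$ case checked for Corollary \ref{Anha}. Your write-up supplies exactly those verifications and is correct, with one small bookkeeping slip worth flagging: Assumption B$'$ \emph{forces} $K_0 = H_0 = \sum_j\nu_j K_j$ (this identification is actually used in the proof of Theorem \ref{thm:hosc}, since the iterative Lemma only guarantees $[Z^{(N)},\tilde K_j]=0$, hence $[Z^{(N)},\tilde\nu\cdot\tilde K]=[Z^{(N)},H_0]=0$, but not $[Z^{(N)},\sum_j K_j]=0$ in the resonant case), so one should not set $K_0:=\sum_j K_j$ at the outset as you do; Remark \ref{pos} is invoked only \emph{a posteriori} to identify the resulting scale $\cH^r = D(H_0^r)$ with the standard isotropic Shubin--Sobolev scale $D\bigl((\sum_j K_j)^r\bigr)$, not to license replacing $K_0$ by $K_0'$ in the hypotheses. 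Also, your aside about rescaling $K_j\rightsquigarrow\tfrac12 K_j$ is unnecessary: $\mathrm{spec}(K_j)=2\N+1\subset\N+1$ already satisfies Assumption A$'$ with $\lambda=(1,\dots,1)$. Neither remark affects the validity of the argument.
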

 
 Remark that after a trivial rescaling of the spatial variables, $H_\nu = \sum_{j=1}^d \nu_j (-\partial_j^2 + x_j^2)$, thus the corollary is a trivial application of Theorem \ref{thm:hosc}. 

 \begin{remark}
   \label{resonant}
In the completely resonant case
$$
H_{(1,...,1)}=-\Delta+|x|^2\ ,
$$
one has $\tilde \nu=1$ and the set of the $\omega's$ for which
\eqref{non.res3} is fulfilled has full measure provided $\kappa>n$.
 \end{remark}
 \begin{remark}
   \label{del.gr}
We note that in the resonant case there have been exhibited examples of polynomial growths of the Sobolev norms. In particular see \cite{del} and \cite{BGMR1} for periodic in time perturbations; of course in
such examples the frequency $\omega$ does not fulfill
\eqref{non.res3}.
Finally we recall also  \cite{buni}, where some some  random in time perturbations are considered.
 \end{remark}
 \section{Proofs of the abstract theorems}

\subsection{Scheme of the proof}\label{sec:sketch}   

As explained in the introduction, the main step of the proof consists
in proving a theorem conjugating the
original Hamiltonian to a Hamiltonian of the form \eqref{Z.1}; this
will be done in  Theorem \ref{thm:smooth1}. Subsequently we
will apply Theorem 1.5 of \cite{MaRo}, which essentially states that,
if $H(t)$ is such that {for some $N > -1$}
\begin{equation}
\label{maro.cond}
[ H (t), K_0] K_0^{N} \in  C^0_b(\R, \cL(\cH^r)) \ , 
\end{equation}
then $\exists C_{r,N}>0 $ such that 
\begin{equation}
\label{maro.est}
\norm{\cU(t,s) \, \psi}_r \leq C_{r, N} \,  \la t-s \ra^{\frac{r}{1+N}} \, \norm{\psi}_r \ , \qquad
\forall t, s \in \R \ . 
\end{equation}

%
%\green{{\tt TO BE ELIMINATED??:}
%\begin{theorem}{\cite{MaRo}}
%\label{thm:maro}
%Let $H(t) \in C^0_b(\R, \cA_\mu)$ be a time-dependent Hamiltonian,
%symmetric on $\cH^\infty$ and such that
%\begin{equation}
%\label{maro.cond0}
%[ H (t), K_0] K_0^{-1} \in  C^0_b(\R, \cA_0) \ .
%\end{equation}
%Then $H(t)$ admits a propagator  $\cU(t,s)$  unitary in $\cL(\cH^0)$ and such that  $\cU(t,s) \in \cL(\cH^r)$ for any $r \in \R$.\\
% Assume furthermore that there exists  $\kappa <1$ s.t.  
%\begin{equation}
%\label{maro.cond}
%[ H (t), K_0] K_0^{-\kappa} \in  C^0_b(\R, \cA_0) \ .
%\end{equation}
%Then for any $r >0$, there exists $C_{r,\kappa} >0$ s.t.
%\begin{equation}
%\label{maro.est}
%\norm{\cU(t,s) \, \psi}_r \leq C_{r, \kappa} \,  \la t-s \ra^{\frac{r}{1-\kappa}} \, \norm{\psi}_r \ , \qquad
%\forall t, s \in \R \ . 
%\end{equation}
%\end{theorem}}
%
%\green{\begin{remark}
%\label{numerology}
%The above theorem has a numerology slightely different from that of
%Theorem 1.5 of \cite{MaRo}, this is due to the fact that we adopted
%here a different definition of order of operator. Anyway, for our
%purpose, this is completely irrelevant.
%\end{remark}}{\tt UNTILL HERE}

{We come to the algorithm of conjugation of the original Hamiltonian to
\eqref{Z.1}. Before discussing it, we need to know the way a
Hamiltonian is changed by a time dependent unitary
transformation. This is the content of the following lemma.}

\begin{lemma}
\label{T.1}
Let $H(t)$ be a time dependent self-adjoint operator, and $X(t)$ be a selfadjoint family of operators.  Assume that
$\psi(t)=\e^{-\im X(t)}\vf(t)$ then
\begin{equation}
\label{1}
\im\dot \psi=H(t) \psi\ \quad \iff \quad \im\dot \vf =\tilde H(t) \vf 
\end{equation}  
where 
\begin{align}
\label{4.1.1}
\tilde H(t):= \e^{\im X(t)} \, H(t)\, \e^{-\im X(t)}
-\int_0^1 \e^{\im s X(t) } \,\dot X(t)  \, \e^{-\im
  s X(t) }  \ \di s \ .
\end{align}
\end{lemma}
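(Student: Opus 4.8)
The plan is to prove Lemma \ref{T.1} by a direct computation, differentiating the relation $\psi(t) = \e^{-\im X(t)}\vf(t)$ and substituting into the Schr\"odinger equation. The only subtlety is that $X(t)$ and $\dot X(t)$ need not commute, so the time derivative of $\e^{-\im X(t)}$ is \emph{not} simply $-\im\dot X(t)\,\e^{-\im X(t)}$; one must use the Duhamel-type formula for the derivative of an operator exponential, which is precisely where the integral $\int_0^1 \e^{-\im s X(t)}\,(\ldots)\,\e^{\im s X(t)}$ enters.

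\medskip
\noindent\textbf{Step 1: derivative of the exponential.} First I would establish that
\[
\frac{d}{dt}\,\e^{-\im X(t)} = -\im\left(\int_0^1 \e^{-\im s X(t)}\,\dot X(t)\,\e^{\im s X(t)}\,\di s\right)\e^{-\im X(t)} \ .
\]
This follows from the standard identity obtained by setting $F(s,t) := \e^{-\im s X(t)}$, noting $\partial_s F = -\im X(t) F$, and computing $\partial_t F$ via $\partial_s\partial_t F = -\im\dot X(t) F - \im X(t)\partial_t F$, an inhomogeneous linear ODE in $s$ for $\partial_t F$ with zero initial data at $s=0$, whose solution (variation of constants, using that $\e^{-\im s X(t)}$ solves the homogeneous equation) is exactly the claimed formula evaluated at $s=1$. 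Equivalently one writes $\frac{d}{dt}\e^{-\im X} = -\int_0^1 \e^{-\im(1-s)X}\,(\im\dot X)\,\e^{-\im s X}\,\di s$ and changes variables / conjugates to bring it to the stated form.

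\medskip
\noindent\textbf{Step 2: substitution.} Differentiating $\psi = \e^{-\im X}\vf$ gives
\[
\im\dot\psi = \im\left(\frac{d}{dt}\e^{-\im X}\right)\vf + \e^{-\im X}(\im\dot\vf) = \left(\int_0^1 \e^{-\im s X}\,\dot X\,\e^{\im s X}\,\di s\right)\e^{-\im X}\vf + \e^{-\im X}(\im\dot\vf) \ .
\]
Setting this equal to $H\psi = H\e^{-\im X}\vf$ and solving for $\im\dot\vf$, then multiplying on the left by $\e^{\im X}$, yields
\[
\im\dot\vf = \e^{\im X} H \e^{-\im X}\vf - \e^{\im X}\left(\int_0^1 \e^{-\im s X}\,\dot X\,\e^{\im s X}\,\di s\right)\e^{-\im X}\vf \ ,
\]
and pulling the outer conjugation inside the integral gives $\int_0^1 \e^{\im(1-s)X}\,\dot X\,\e^{-\im(1-s)X}\,\di s = \int_0^1 \e^{\im s X}\,\dot X\,\e^{-\im s X}\,\di s$ after the change of variable $s\mapsto 1-s$, which is exactly $\tilde H(t)$ as in \eqref{4.1.1}. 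The reverse implication is identical, read backwards, so the equivalence \eqref{1} follows.

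\medskip
\noindent\textbf{Main obstacle.} The only genuine point requiring care is Step 1, the noncommutative differentiation of the exponential; everything else is bookkeeping. At the level of rigor appropriate here one works on the dense domain $\cH^{+\infty}$ where $X(t)$ maps into itself and all manipulations (differentiation under the integral sign, the ODE argument) are justified, the identities then extending by the self-adjointness of $X(t)$ and the unitarity of $\e^{\pm\im X(t)}$; since in the applications $X(t)$ will be a bounded-order operator in $\cA$ and the construction only ever uses $\cH^{+\infty}$ vectors, I would simply state that the computation is performed on $\cH^{+\infty}$.
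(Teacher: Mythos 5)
Your computation is correct and is exactly the ``explicit computation'' the paper alludes to (it cites Lemma 3.2 of \cite{Bam16I} in lieu of writing it out); your Step 1 Duhamel identity and the change of variable $s\mapsto 1-s$ in Step 2 reproduce \eqref{4.1.1} precisely.
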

\noindent This is seen by an explicit computation. For example see Lemma 3.2 of
\cite{Bam16I}. 

A further important property giving the expansion of an operator
of the form $\e^{\im
  X(t)} \, A\, \e^{-\im X(t)}$ in operators of decreasing order is
stated in the following lemma.

\begin{lemma}
\label{comX}
Let $X\in\cA_{\rho}$ with $\rho< 1$ be a symmetric operator. Let $A\in\cA_{m}$ with
$m \in \R$. Then $X$ is selfadjoint and for any $M\geq 1$ we have
\begin{equation}
\label{expansion}
\e^{\im \tau X}\, A \,  \e^{-\im\tau X } = \sum_{\ell=0}^{M}\frac{\tau^{\ell}}{\ell!}{\rm ad}_X^\ell(A) + R_M(\tau, X,A) \ , \qquad \forall \tau \in \R \ , 
\end{equation}
 where  $R_M(\tau, X,A)\in\cA_{m -(M+1)(1-\rho)}$.  
\\
In particular ${\rm ad}_X^{\ell}(A)\in\cA_{m-\ell(1-\rho)}$ and  
${\rm e}^{\im \tau X}A{\rm e}^{-\im \tau X}\in \cA_{m}$, $\forall \tau
\in \R$.
\end{lemma}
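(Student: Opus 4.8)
The plan is to prove the expansion \eqref{expansion} by a Taylor-type argument in the parameter $\tau$, controlling the remainder through Lemma \ref{comX}'s hypotheses and the graded Lie algebra structure of $\cA$. First I would observe that $X \in \cA_\rho$ with $\rho < 1$ is symmetric with domain $\cH^{+\infty}$, and since $X$ maps $\cH^s \to \cH^{s-\rho}$ boundedly for all $s$ (by the definition of $\cA_\rho$ together with Assumption I(ii)), it is in particular bounded on $\cH^0$ when $\rho \le 0$, and in general essentially selfadjoint by a standard argument (e.g. Nelson's theorem, using that $\cH^{+\infty}$ is a core of analytic vectors, or simply because $K_0^{-\rho/\rho}\cdots$ — more cleanly, $X$ is symmetric and $K_0$-bounded with relative bound controllable, hence selfadjoint on a natural domain); this makes $\e^{\im\tau X}$ a well-defined unitary group. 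Then, setting $A(\tau) := \e^{\im\tau X} A \e^{-\im\tau X}$, I would note that as a one-parameter family this satisfies the Heisenberg-type ODE $\frac{d}{d\tau} A(\tau) = \im\,\e^{\im\tau X}[X,A]\e^{-\im\tau X} = {\rm ad}_X(A)(\tau)$, at least weakly on $\cH^{+\infty}$.

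The key structural input is Assumption I(iv): since $X \in \cA_\rho$ and $A \in \cA_m$, the commutator ${\rm ad}_X(A) = \im[X,A] \in \cA_{m + \rho - 1}$, and because $\rho < 1$ we have $m + \rho - 1 < m$, so each commutator strictly lowers the order by $1 - \rho > 0$. Iterating, ${\rm ad}_X^\ell(A) \in \cA_{m - \ell(1-\rho)}$, with the continuity of the Lie bracket (and the seminorm estimate \eqref{est.3}) giving quantitative control $\wp^{m-\ell(1-\rho)}_j({\rm ad}_X^\ell(A)) \le C^\ell \wp^\rho_N(X)^\ell \wp^m_N(A)$ for appropriate $N$ depending on $j$ and $\ell$. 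This proves the second displayed assertion of the lemma. Then I would write the exact Taylor formula with integral remainder,
\[
A(\tau) = \sum_{\ell=0}^M \frac{\tau^\ell}{\ell!}\,{\rm ad}_X^\ell(A) + R_M(\tau,X,A), \qquad R_M(\tau,X,A) = \int_0^\tau \frac{(\tau-\sigma)^M}{M!}\,\e^{\im\sigma X}\,{\rm ad}_X^{M+1}(A)\,\e^{-\im\sigma X}\,\di\sigma,
\]
which is justified by differentiating $A(\tau)$ a total of $M+1$ times, each differentiation producing one more ${\rm ad}_X$. Finally I would identify the order of the remainder: ${\rm ad}_X^{M+1}(A) \in \cA_{m-(M+1)(1-\rho)}$ by the previous step, and $\e^{\im\sigma X}(\cdot)\e^{-\im\sigma X}$ preserves this order — this is exactly the content of Remark \ref{ex.ii} / Assumption II applied with $X$ in place of $K_0$, or alternatively is bootstrapped from the very expansion being proved (the argument can be organized so as to obtain, for fixed finite order, boundedness of $\e^{\im\sigma X}B\e^{-\im\sigma X}$ on the relevant $\cH^s$ scale uniformly for $\sigma$ in compact sets, by summing a convergent series of commutators). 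Integrating a $C^0_b$-in-$\sigma$ family of operators in $\cA_{m-(M+1)(1-\rho)}$ over the compact interval $[0,\tau]$ yields $R_M(\tau,X,A) \in \cA_{m-(M+1)(1-\rho)}$, using Assumption I(v) if one wants to absorb lower-order smoothing contributions, or just the Fréchet-space completeness of $\cA_{m-(M+1)(1-\rho)}$.

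The main obstacle I anticipate is a subtle circularity: establishing that $\e^{\im\sigma X}$ maps $\cA_{m'} \to \cA_{m'}$ continuously (uniformly for $\sigma$ bounded) is precisely the "in particular" clause at the end of the lemma, so one must be careful not to assume it in proving the remainder estimate. The clean way around this is to prove the two statements simultaneously by induction on $M$: for the base case one directly estimates $\e^{\im\sigma X} B \e^{-\im\sigma X} - B = \int_0^\sigma \e^{\im s X}{\rm ad}_X(B)\e^{-\im s X}\di s$ in the operator norms $\norm{\cdot}_{m',s}$ via a Gronwall/iteration argument showing the commutator series $\sum_\ell \frac{\sigma^\ell}{\ell!}{\rm ad}_X^\ell(B)$ converges in each $\cL(\cH^s,\cH^{s-m'})$ (here the crucial point is again $\rho<1$, so that successive commutators gain smoothing and the seminorm bounds $C^\ell/\ell!$ are summable), and then one knows $\e^{\im\sigma X}B\e^{-\im\sigma X}$ lies in $\bigcap_s \cL(\cH^s,\cH^{s-m'})$; membership in the possibly-smaller space $\cA_{m'}$ then follows from Assumption I(v) since the partial sums are in $\cA_{m'}$ and the tail is $N$-smoothing for $N$ as large as we like. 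A secondary, purely technical, point is justifying the repeated differentiation of $\tau \mapsto A(\tau)$ in the operator topology rather than merely weakly on $\cH^{+\infty}$; this is handled by the same convergent-series representation, which is manifestly differentiable term by term.
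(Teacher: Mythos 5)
Your overall structure matches the paper's: Taylor expansion with integral remainder, Assumption I(iv) to track the order drop $1-\rho$ per commutator, and Assumption I(v) to upgrade the remainder from ``bounded on the scale'' to ``in the class.'' You also correctly anticipate the central subtlety (one cannot simply assert $\e^{\im\sigma X}(\cdot)\e^{-\im\sigma X}$ preserves $\cA_{m'}$, since that is what is being proved). However, two of the routes you sketch around that obstacle are problematic. First, you cannot ``apply Remark \ref{ex.ii} / Assumption II with $X$ in place of $K_0$'': Assumption II is a structural hypothesis made specifically about conjugation by $\e^{\im\tau K_0}$, and there is no reason a generic $X\in\cA_\rho$ inherits it. Second, the Gronwall/series-convergence bootstrap you propose is both unnecessary and shaky: the seminorm estimate \eqref{est.3} only says that for each target seminorm index $j$ there \emph{exists} some $N=N(j,m,n)$ controlling the bound, and that $N$ may grow with $\ell$ along the iteration, so the claimed $C^\ell/\ell!$ summability of $\sum_\ell \frac{\sigma^\ell}{\ell!}\,\mathrm{ad}_X^\ell(B)$ in each $\cL(\cH^s,\cH^{s-m'})$ is not justified by the stated hypotheses. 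The paper avoids both issues with a simpler input: Lemma \ref{MR}(i) (proved independently, using only that $X\in\cA_\rho\subset\cA_1$ is symmetric) yields $\e^{\im\sigma X}\in\cL(\cH^r)$ for every $r$. Combined with $\mathrm{ad}_X^{M+1}(A)\in\cA_{m-(M+1)(1-\rho)}$, the integrand $\e^{\im s\tau X}\,\mathrm{ad}_X^{M+1}(A)\,\e^{-\im s\tau X}$ lies in $\cL(\cH^s,\cH^{s-m+(M+1)(1-\rho)})$ for every $s$ without any series argument, hence the remainder is $N$-smoothing once $M$ is large enough, and Assumption I(v) gives $\e^{\im\tau X}A\e^{-\im\tau X}\in\cA_m$ (and then $R_M\in\cA_{m-(M+1)(1-\rho)}$ by writing $R_M$ as a tail of the expansion plus an $N$-smoothing piece). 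If you replace your bootstrap with this direct use of Lemma \ref{MR}(i), your proof coincides with the paper's.
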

The proof will be given in Sect. \ref{flo.lem}.

We describe now the algorithm which will lead to the smoothing Theorem
\ref{thm:smooth1}; the proof is slightly different according to the
set of assumptions one chooses. We start by discussing it under the
assumptions of Theorem \ref{thm:main}, namely Assumption A and B.
Subsequently we will discuss the changes needed to deal with Theorem
\ref{thm:hosc}.

We look for a change of variables of the form $\vf = \e^{ \im X_1(t)}
\psi$ where $X_1(t)\in\cA_{\rho-\mu + 1 }$ is a self-adjoint operator
which, due to the assumption $\rho < \mu$, has 
order smaller then one. Then $\vf$ fulfills the Schr\"odinger equation
$\im \dot \vf = H^+(t) \vf$ with
\begin{align*}
\label{4.1.1}
H^{+}(t)&:= \e^{\im X_1(t)} \, H(t)\, \e^{-\im X_1(t)}
-\int_0^1 \e^{\im s X_1(t) } \,\dot X_1(t)  \, \e^{-\im
  s X_1(t) }  \ \di s  \\
  &= H_0 + \im [ X_1(t), H_0]  + V(t)+\im [X_1(t),V(t)]
  -\frac{1}{2}[X_1(t), [ X_1(t), H_0] ] +\cdots\\
   &\qquad -\int_0^1 \e^{\im s X_1(t) } \,\dot X_1(t)  \, \e^{-\im
  s X_1(t) }  \ \di s .
\end{align*} 
In view of the properties of the graded algebra we have $[X_1,V]\in \cA_{2\r
  -\mu}$, $[X_1, [ X_1, H_0]]\in \cA_{2\r -\mu}$ (Assumption I (iv)) and
$\e^{\im s X_1(t) } \,\dot X_1(t) \, \e^{-\im s X_1(t) }\in
\cA_{\r-\mu+1}$ (Lemma \ref{comX}),  therefore one has
\begin{equation}
\label{4.1.a}
H^{+}(t)= H_0 + \im [ X_1(t), H_0]  + V(t) + V^+_1(t) \ ,
\end{equation} 
 with $V_1^+(t) \in C^\infty_b(\R, \cA_{\min(\r-\mu+1,2\r
  -\mu)})$.
 \\ 
  Now  we look for $X_1(t)$ s.t.
\begin{equation}
\label{hom.4}
\im [H_0,X_1(t)] =V(t) - \lan V(t)\ran  \ , 
\end{equation}
{where $ \lan V(t)\ran $ is the average over $\tau$ of $e^{\im\tau
  K_0}V(t)e^{-\im\tau K_0}$ (see \eqref{average}), which in particular
commutes with {$K_0$}.  We will verify in
Lemma \ref{hom} that there exists $X_1$ s.t.
$$
\im [H_0,X_1(t)] -V(t) + \lan V(t)\ran\in \cA_{\rho-1}\ . 
$$
Therefore using such a $X_1$ to generate a unitary transformation, we get}
  \begin{align}
\label{4.1.3}
H^{+}(t)&:= H_0 + \la V(t) \ra + V^+(t) \ , 
\end{align} 
 where $ V^+(t) \in C^\infty_b(\R, \cA_{\r - \delta})$
  with 
\begin{equation}
\label{delta}
\delta := \min \left( 1, \mu-1, \mu - \rho \right)>0 \ .
\end{equation}
Therefore  $V^+(t)$ is a perturbation of order  lower than $V(t)$.  
Furthermore $\la V(t) \ra$ commutes with $K_0$.

Iterating this procedure  we will establish an "almost" reducibility
result that will be stated and proved in Subsect. \ref{sec:iterative}.

{Then, using Theorem 1.5 {of \cite{MaRo}}, we immediately get Theorem
  \ref{thm:main}.}

In the case where $H_0\in\cA_1$ the procedure has to be slightly
modified since in this case $X_1$ and therefore $\dot X_1$ has the
same order as $V$ and thus it cannot be considered as a remainder when
analyzing $H^+$. In this case one rewrites 
\begin{align*}
\label{4.1..2}
H^+(t)&=H_0 + \im [ X_1(t), H_0]  + V(t)
\\
&+\im [X_1(t),V(t)]
  -\frac{1}{2}[X_1(t), [ X_1(t), H_0]] +\cdots\\
   &-\dot X_1 - \int_0^1 \left(\im \, s \, [X_1(t),\dot X_1(t)]+....\right) \di s ,
\end{align*}
so that eq. \eqref{4.1.a} is substituted by
\begin{equation}
\label{4.1.b}
H^{+}(t)= H_0 + \im [ X(t), H_0]  + V(t) -\dot X_1(t)+ V^+(t) \ ,
\end{equation}
with $V^+\in\cA_{\rho- \delta_*}$, 
\begin{equation}
\label{delta*}
\delta_* := 1 - \rho > 0  \ , 
\end{equation}
so again it is more regular than $V(t)$. Thus one is led to consider the new
homological equation
\begin{equation}
\label{hom.5}
\im [H_0,X_1(t)]+\dot X_1(t) =V(t) - \lan V(t)\ran \ ,
\end{equation}
{where $\lan V(t)\ran$ has to commute with $K_0$. 
In order to be able to solve such an equation we restrict to the case
of $V(t)$ quasiperiodic in $t$ and, as explained in the introduction,
we develop a procedure based on a suitable Fourier expansion to
construct $X_1$ and $\langle V(t)\rangle$. The details are given in 
 Lemma \ref{lem:hom2} which will ensure that
such a homological equation has a smooth solution and thus the
procedure is well defined also in the case of order 1.
}
   
\subsection{A couple of lemmas on flows}\label{flo.lem}

\begin{lemma}
\label{MR}
$(i)$ Let  $X\in\cA_1$ be symmetric w.r.t. the scalar product  of $\cH^0$. Then  $X$ has a unique  self-adjoint  extension 
 and $\e^{-\im \tau X} \in \cL(\cH^r)$ $\, \forall r \geq 0$ and  $\forall \tau\in\R$. Furthermore $\e^{-\im \tau X}$ is an isometry in $\cH^0$.\\
$(ii)$ Assume that $X(t)$ is a family of symmetric operators in $\cA_1$ s.t. 
 \begin{equation}
 \label{X(t).est}
 \sup_{t \in \R} \wp^1_j(X(t)) < \infty \ , \quad 
\forall j \geq 1 \ . 
 \end{equation}
 Then there exist $c_r, C_r >0$ s.t.
 \begin{equation}
 \label{X(t).est0}
 c_r \norm{\psi}_r \leq \norm{\e^{-\im \tau X(t)} \psi}_r \leq 
 C_r \norm{\psi}_r \ , \qquad \forall t \in \R \ , \quad \forall \tau \in [0,1]  \ .
 \end{equation}
\end{lemma}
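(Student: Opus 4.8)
The plan is to treat parts $(i)$ and $(ii)$ together, the only difference being that in $(ii)$ the constants must be tracked through the seminorm $\wp^1_j(X(t))$ and one restricts to $\tau\in[0,1]$. The scheme is: (1) essential self-adjointness of $X$ via a commutator theorem; (2) an energy estimate in $\cH^r$ via Gronwall; (3) a truncation argument to make the energy estimate rigorous; (4) the lower bound in $(ii)$ by duality.

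First I would establish essential self-adjointness of $X$ on the core $\cH^\infty$ by applying Nelson's commutator theorem with $K_0$ as comparison operator (rescaling so $K_0\ge\uno$, harmless since $K_0\ge c_K>0$). Two inputs are needed. Since $X\in\cA_1\subset\cL(\cH^1,\cH^0)$ and $\cH^1=D(K_0)$, Remark \ref{rem:control} (eq. \eqref{est.1}) gives $\norm{Xu}_0\le C\,\wp^1_N(X)\,\norm{K_0 u}_0$, so $X$ is $K_0$-bounded. For the form commutator, if $u\in\cH^\infty$ then by symmetry of $X$ and $K_0$ one has $\langle Xu,K_0u\rangle-\langle K_0u,Xu\rangle=\langle u,[X,K_0]u\rangle$, and $[X,K_0]\in\cA_1$ by Assumption I(iv); hence $[X,K_0]\in\cL(\cH^{1/2},\cH^{-1/2})$ with norm controlled by $\wp^1_N(X)$ via \eqref{est.1}, \eqref{est.3}, so $|\langle u,[X,K_0]u\rangle|\le C\,\wp^1_N(X)\norm{u}_{1/2}^2=C\,\wp^1_N(X)\langle u,K_0u\rangle$. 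Since $\cH^\infty$ is a core for $K_0$, Nelson's theorem yields a unique self-adjoint extension $\bar X$, and by Stone's theorem $\e^{-\im\tau\bar X}$ is unitary on $\cH^0$.

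The core estimate is then an energy identity (the type of argument used in \cite{MaRo}). For $\psi\in\cH^\infty$ set $\phi(\tau):=\e^{-\im\tau\bar X}\psi$ and, formally, $g(\tau):=\norm{\phi(\tau)}_r^2=\langle\phi(\tau),K_0^{2r}\phi(\tau)\rangle$. Differentiating and using $\im\dot\phi=\bar X\phi$ gives $g'(\tau)=\im\langle\phi(\tau),[\bar X,K_0^{2r}]\phi(\tau)\rangle$. Now $K_0^{2r}\in\cA_{2r}$ (Assumption I(i)) and $[X,K_0^{2r}]\in\cA_{2r}$ (Assumption I(iv)), hence $[X,K_0^{2r}]\in\cL(\cH^r,\cH^{-r})$ with norm $\le C(r)\wp^1_N(X)$ by \eqref{est.1}, \eqref{est.3}; therefore $|g'(\tau)|\le C(r)\wp^1_N(X)\,g(\tau)$, and Gronwall gives $\norm{\e^{-\im\tau\bar X}\psi}_r\le\e^{\frac12 C(r)\wp^1_N(X)|\tau|}\norm{\psi}_r$. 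Density of $\cH^\infty$ in $\cH^r$ extends this to all of $\cH^r$, proving $(i)$; for $(ii)$ the constant depends on $X$ only through $\wp^1_N(X)$, so with $\sup_t\wp^1_N(X(t))<\infty$ and $\tau\in[0,1]$ the bound $\norm{\e^{-\im\tau X(t)}\psi}_r\le C_r\norm{\psi}_r$ is uniform in $t,\tau$. The lower bound then follows by applying the upper bound to $-X(t)\in\cA_1$ (same seminorms) and using $\e^{\im\tau X(t)}\e^{-\im\tau X(t)}=\uno$ on $\cH^r$: $\norm{\psi}_r\le C_r\norm{\e^{-\im\tau X(t)}\psi}_r$, so one takes $c_r=C_r^{-1}$.

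The step requiring genuine care — the main obstacle — is that the computation of $g'$ is only formal, since a priori we do not know $\phi(\tau)\in\cH^r$; so $g(\tau)$ need not be finite nor differentiable. I would fix this by truncating $X$ rather than $K_0$: let $\Pi_n$ be the spectral projection of $K_0$ onto $[0,n]$ and $X_n:=\Pi_n X\Pi_n$, a bounded self-adjoint operator with range in $\cH^\infty$. For $\psi\in\cH^\infty$, $\phi_n(\tau):=\e^{-\im\tau X_n}\psi\in\cH^\infty$ genuinely, so $g_n(\tau):=\norm{\phi_n(\tau)}_r^2$ is smooth and $g_n'(\tau)=\im\langle\phi_n,[X_n,K_0^{2r}]\phi_n\rangle$ rigorously. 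Since $\Pi_n$ commutes with $K_0$, $[X_n,K_0^{2r}]=\Pi_n[X,K_0^{2r}]\Pi_n$, whose $\cL(\cH^r,\cH^{-r})$-norm is $\le\norm{[X,K_0^{2r}]}_{\cL(\cH^r,\cH^{-r})}\le C(r)\wp^1_N(X)$ \emph{uniformly in $n$}; hence $|g_n'|\le C(r)\wp^1_N(X)\,g_n$ and Gronwall gives $\norm{\e^{-\im\tau X_n}\psi}_r\le\e^{\frac12 C(r)\wp^1_N(X)|\tau|}\norm{\psi}_r$ uniformly in $n$. Finally, $X_n\to\bar X$ strongly on the core $\cH^\infty$, so $\e^{-\im\tau X_n}\to\e^{-\im\tau\bar X}$ strongly on $\cH^0$; combining this with the uniform $\cH^r$-bound (bounded sequences in the Hilbert space $\cH^r$ are weakly precompact and the norm is weakly lower semicontinuous) yields $\e^{-\im\tau\bar X}\psi\in\cH^r$ with the same bound for $\psi\in\cH^\infty$, and density concludes. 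Once the truncation is in place the whole argument reduces to the single commutator estimate $[X,K_0^{2r}]\in\cA_{2r}$, which is exactly Assumption I(iv).
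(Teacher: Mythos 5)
Your proof is correct, but it takes a more self-contained route than the paper. For part $(i)$ the paper simply observes that $X K_0^{-1}$ and $[X,K_0]K_0^{-1}$ are of order $0$ (hence in $\cL(\cH^r)$ for all $r$) and then invokes Theorem~1.2 of \cite{MaRo}, which delivers in one stroke both the essential self-adjointness of $X$ and the boundedness of $\e^{-\im\tau X}$ on every $\cH^r$. You instead reprove this from scratch: Nelson's commutator theorem for the self-adjointness (using exactly the same two commutator inputs), and a truncation $X_n=\Pi_n X\Pi_n$ plus Gronwall plus weak lower semicontinuity of the norm to get $\e^{-\im\tau X}\in\cL(\cH^r)$. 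This is more work, but it makes the lemma independent of \cite{MaRo}; the paper's version is shorter precisely because part~$(i)$ is delegated. For part~$(ii)$ the two arguments are essentially identical modulo presentation: the paper applies the fundamental theorem of calculus to $\tau_1\mapsto\e^{\im\tau_1 X(t)}K_0^r\e^{-\im\tau_1 X(t)}\psi$ (which is rigorous once $(i)$ guarantees $\e^{-\im\tau_1 X(t)}\psi\in\cH^r$), inserts $K_0^{-r}K_0^r$, and uses that $[X(t),K_0^r]K_0^{-r}\in\cL(\cH^0)$ uniformly in $t$; you differentiate $g(\tau)=\|\e^{-\im\tau X(t)}\psi\|_r^2$ and use $[X(t),K_0^{2r}]\in\cA_{2r}\subset\cL(\cH^r,\cH^{-r})$ uniformly. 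Both reduce to Assumption~I(iv) plus the uniform seminorm hypothesis \eqref{X(t).est}, and both obtain the lower bound by applying the upper bound to $-X(t)$ and using $\e^{\im\tau X(t)}\e^{-\im\tau X(t)}=\uno$ on $\cH^r$. The one issue you flag --- justifying the a priori membership of the flow in $\cH^r$ before differentiating --- is real, and your truncation handles it cleanly; in the paper this issue does not arise because it is already settled by the appeal to \cite{MaRo} in part~$(i)$.
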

\begin{proof} (i)  From the properties  of the  algebra  $\cA$ we have that $ X K_0^{-1}$ and  $[X,K_0] K_0^{-1}$ are of order $0$.  Thus by definition these operators belong to  $\cL(\cH^r)$ $\, \forall r \in \R$. Then the result follows from  Theorem 1.2 of \cite{MaRo}. \\
(ii) By item (i), for any $t \in \R$ and $\tau \in [0,1]$ the operator $\e^{-\im \tau X(t)}$ is an isometry in $\cH^0$, therefore
$$
\norm{\e^{-\im \tau X(t)}\psi}_{r} = \norm{\e^{\im \tau X(t)} \, K_0^{r} \, \e^{-\im \tau X(t)} \psi}_0 \ . 
$$
%By definition $e^{-\im \tau X(t)}\psi$ is the solution of the Schr\"odinger equation $\im \partial_\tau \psi(\tau) = X(t) \psi(\tau)$, $\psi(0) = \psi$ (here $t$ plays the role of a parameter!),
Then  we have
\begin{align}
\notag
\e^{\im \tau X(t)} \, & K_0^{r} \, \e^{-\im \tau X(t)} \psi 
 = K_0^r \psi + \im \int_0^\tau \e^{\im \tau_1 X(t)} \,[X(t),  K_0^{r}]  \, \e^{-\im \tau_1 X(t)} \psi \, \di \tau_1 \\
\label{X(t).est1}
& = K_0^r \psi + \im \int_0^\tau \e^{\im \tau_1 X(t)} \,[X(t),  K_0^{r}] K_0^{-r} \, K_0^r  \, \e^{-\im \tau_1  X(t)} \psi \, \di \tau_1 
\end{align}
By the properties of the algebra $\cA$ and \eqref{X(t).est} one has that (using \eqref{est.1}--\eqref{est.3})
$$
\sup_{t \in \R} \norm{[X(t),  K_0^{r}] K_0^{-r}}_{\cL(\cH^0)} < C_r < +\infty \ , 
$$
therefore taking the norm $\norm{\cdot}_0$ of \eqref{X(t).est1} one gets the inequality
$$
\norm{\e^{-\im \tau X(t)}\psi}_{r} \leq \norm{\psi}_r + \int_0^\tau C_r \norm{ \e^{-\im \tau_1 X(t)} \psi}_r \, \di \tau_1 \ . 
$$
Then by Gronwall we conclude that
$$
\norm{\e^{-\im \tau X(t)}\psi}_{r} \leq \e^{C_r} \norm{\psi}_r \ , \quad \forall t \in \R , \ \ \ \forall \tau \in [-1,1] \ . 
$$
This proves the majoration in \eqref{X(t).est0}. The minoration follows simply by the identity $\psi =e^{\im \tau X(t)} e^{-\im \tau X(t)}\psi$ and the majoration.
\end{proof}

\begin{proof}[Proof of Lemma \ref{comX}.]
Selfadjointness was proven in the previous lemma. Let us apply to the
l.h.s. of \eqref{expansion} the Taylor formula at $\tau=0$.  Then we
get, with $U_X(\tau) := {\rm e}^{-\im \tau X}$ and ${\rm ad}_X(A)
:=\im [X,A]$
\begin{align}
\label{tay.q}
&U_X(-\tau)\, A \, U_X(\tau)
\\
\nonumber
&=\sum_{j=0}^{M}\frac{\tau^j}{j!}{\rm ad}_X^j(A) + 
\frac{\tau^{M+1}}{M!}
\int_0^1(1-s)^{M+1} U_X(-s\tau)\, {\rm ad}_X^{M+1}(A)\, U_X( s\tau) \,\di s\ .
\end{align}
Using Assumption I (iv), we have ${\rm ad}_X^j(A)\in\cA_{m -j(1-\rho)}$. We define the remainder $R_M(\tau, X, A)$ to be the integral term in \eqref{tay.q}, which, using also Lemma \ref{MR},    belongs to $\cL(\cH^s, \cH^{s - m + (M+1)(1-\rho)})$, $\, \forall s \in \R$. Therefore the remainder $R_M(\tau, X, A)$  is $N$-smoothing  
provided $M+1 \geq \frac{N + m}{1-\rho}$. As  $M$ can be taken arbitrary large, ${\rm e}^{\im \tau X}A{\rm e}^{-\im
  \tau X}$ fulfills Assumption I (v), thus it belongs to $\cA_{m}$.
\end{proof}

\subsection{Solution of the Homological equations}\label{sol.homo.2}

\paragraph{The first homological equation.} 
As we have seen in Section
\ref{sec:sketch}, to prove Theorem \ref{thm:main}  we need to study
an homological equation of the form  
\beq
\label{hom1}
\im [H_0, X] = A-\la A\ra, 
\eeq 
where $A\in \cA_m$  and $\lan A\ran$ is the average of $A$  along the periodic flow of $K_0$:
\begin{equation}
\label{average}
\lan A\ran :=\frac{1}{2\pi}\int_0^{2\pi}A(\tau) \, \di \tau \ , \qquad A(\tau) = \e^{\im \tau K_0} \, A \, \e^{- \im \tau K_0}.
\end{equation}
 Notice that  the assumption on the spectrum of $K_0$  (see Assumption A) entails that {$\e^{2\im \pi K_0} = \e^{2 \im \pi \lambda}$, thus for any $A \in \cA$ one has  $\e^{2 \im \pi K_0} \, A  \, \e^{-2 \im \pi K_0} = A$, namely  $\tau \mapsto A(\tau)$ is $2\pi$ periodic.}

\begin{lemma}
\label{normal.form}
Let $A \in \cA_m$, $m \in \R$. Then $\la A \ra \in \cA_m$ and
\begin{equation}
\label{eq.nf}
[ K_0, \la A \ra ] = 0 \ . 
\end{equation}
\end{lemma}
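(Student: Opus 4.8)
The plan is to prove the two claims separately, starting with the easy algebraic identity and then handling the membership $\langle A\rangle\in\cA_m$.

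First I would establish \eqref{eq.nf}. Since ${\rm spec}(K_0)\subseteq\N+\lambda$, the flow $\tau\mapsto A(\tau)=\e^{\im\tau K_0}A\e^{-\im\tau K_0}$ is $2\pi$-periodic (as noted just before the lemma). For any fixed $\sigma\in\R$ one has $\e^{\im\sigma K_0}A(\tau)\e^{-\im\sigma K_0}=A(\tau+\sigma)$, and integrating in $\tau$ over a full period and using periodicity to shift the variable of integration gives $\e^{\im\sigma K_0}\langle A\rangle\e^{-\im\sigma K_0}=\langle A\rangle$ for all $\sigma$. Differentiating this identity at $\sigma=0$ yields $[K_0,\langle A\rangle]=0$. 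One must be slightly careful that the differentiation is justified: it suffices to check the identity on the dense subspace $\cH^{+\infty}$ and use that, by Assumption II, $\tau\mapsto A(\tau)$ is $C^0_b(\R,\cA_m)$ and hence (by the semigroup property and Assumption I(iii)–(iv)) the map $\sigma\mapsto A(\sigma)\psi$ is differentiable for $\psi\in\cH^{+\infty}$ with derivative $\im[K_0,A]\,\e^{-\im\sigma K_0}\psi$; alternatively one can simply observe that the shift-invariance of $\langle A\rangle$ under $\e^{\im\sigma K_0}$ for all $\sigma$ forces $\langle A\rangle$ to commute with $K_0$ on each spectral subspace, hence everywhere on $\cH^{+\infty}$.

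Next I would prove $\langle A\rangle\in\cA_m$. The natural approach is to use Assumption I(v): it suffices to exhibit, for each $N>0$, a decomposition $\langle A\rangle=B^{(N)}+S^{(N)}$ with $B^{(N)}\in\cA_m$ and $S^{(N)}$ $N$-smoothing. A convenient choice is a Riemann-sum (or quadrature) approximation: set $B^{(N)}:=\frac{1}{n}\sum_{k=0}^{n-1}A(2\pi k/n)$ for a suitable $n=n(N)$. Each summand $A(2\pi k/n)\in\cA_m$ by Assumption II, so $B^{(N)}\in\cA_m$, and its $\cA_m$-seminorms are bounded uniformly (independently of $n$) by $\sup_\tau\wp^m_j(A(\tau))<\infty$. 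The difference $S^{(N)}:=\langle A\rangle - B^{(N)}$ is the quadrature error for the smooth periodic integrand $\tau\mapsto A(\tau)$; the point is to show it is $N$-smoothing, i.e. bounded $\cH^\kappa\to\cH^{\kappa+N}$ for all $\kappa$. This is where one needs extra smoothness of $\tau\mapsto A(\tau)$ as a map into more regular operator spaces — but that is exactly what one gets by writing, via the fundamental theorem of calculus applied repeatedly, $A(\tau)-A(\tau')$ in terms of integrals of $\frac{d}{ds}A(s)=\im[K_0,A(s)]=\im\,\e^{\im s K_0}[K_0,A]\e^{-\im s K_0}$, and noting by Assumption I(iv) that $[K_0,A]\in\cA_m$ again (order gain from the commutator is irrelevant here). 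Iterating: all derivatives $\frac{d^\ell}{d\tau^\ell}A(\tau)={\rm ad}_{K_0}^\ell(A)(\tau)\in\cA_m$ with uniformly bounded seminorms, so $\tau\mapsto A(\tau)\in C^\infty_b(\R,\cA_m)\hookrightarrow C^\infty_b(\R,\cL(\cH^\kappa,\cH^{\kappa-m}))$; but this only gives mapping $\cH^\kappa\to\cH^{\kappa-m}$, not the smoothing $\cH^\kappa\to\cH^{\kappa+N}$ we want for $S^{(N)}$.

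I expect the last point to be the main obstacle, and I would resolve it differently: rather than Assumption I(v), argue directly that $\langle A\rangle$ maps $\cH^s$ to $\cH^{s-m}$ with norm controlled by $\sup_\tau\|A(\tau)\|_{m,s}$, which is finite by Assumption II and \eqref{est.1}; since $\langle A\rangle$ commutes with $K_0$, for $s$ ranging over $\R$ this shows $\langle A\rangle\in\bigcap_s\cL(\cH^s,\cH^{s-m})$. To upgrade to membership in $\cA_m$ itself, observe that the averaging map $A\mapsto\langle A\rangle=\frac{1}{2\pi}\int_0^{2\pi}A(\tau)\,\di\tau$ is a Bochner integral of the $C^0_b(\R,\cA_m)$-valued function $\tau\mapsto A(\tau)$ over the compact interval $[0,2\pi]$; since $\cA_m$ is a Fréchet space (Assumption I(ii)) and the integrand is continuous with relatively compact range, the integral converges in $\cA_m$, hence $\langle A\rangle\in\cA_m$. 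The seminorm bound $\wp^m_j(\langle A\rangle)\leq\sup_{\tau\in[0,2\pi]}\wp^m_j(A(\tau))$ then follows from convexity of each seminorm. This second route is cleaner and avoids the smoothing-estimate subtlety entirely; I would present it as the main argument, keeping the Riemann-sum/Assumption I(v) approach only as a fallback if for some reason Bochner integration in the given Fréchet space is not directly available.
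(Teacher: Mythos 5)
Your main argument is correct and is essentially what the paper intends: the paper's proof merely says that $\la A\ra\in\cA_m$ ``is a consequence of Assumption II'' and that \eqref{eq.nf} ``follows by a direct computation,'' and your Bochner/Riemann integration of the $C^0_b$ $\cA_m$-valued integrand on $[0,2\pi]$, together with the shift-invariance argument $\e^{\im\sigma K_0}\la A\ra\e^{-\im\sigma K_0}=\la A\ra$ differentiated at $\sigma=0$, is the natural (and correct) fleshing-out of both of these. The Riemann-sum/Assumption~I(v) fallback you sketch does have the gap you identify (the quadrature error has no reason to be $N$-smoothing), so it should be dropped rather than kept in reserve.
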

\begin{proof}
$\la A \ra \in \cA_m$ is a consequence of  Assumption II. 
Identity \eqref{eq.nf} follows by a direct computation.
\end{proof}

\begin{lemma}\label{hom} 
\noindent
(i) Let $A \in \cA_m$, $ m \in \R$. Then
\beq
\label{sol1}
Y = \frac{1}{2\pi}\int_0^{2\pi}\tau \, (A-\la A\ra)(\tau)\, \di  \tau  
\eeq solves the homological  equation
\beq\label{hom2}
\im[K_0, Y] = A-\la A\ra.
\eeq 
Further $Y\in \cA_m$ and if $A$ is symmetric, so is $Y$.\\
(ii) Choose  $R>0$ such that $f^\prime(x) \geq 1$ if $x\geq R$ and $\eta\in C^\infty(\R)$ such that 
$\eta(x)=1$ if $x\in[0,R]$, $\eta(x)=0$ if $x\geq R+1$. 
Define  
\begin{equation}
\label{sol2}
X:= \left(1-\eta(K_0)\right) \, \left(f^\prime(K_0)\right)^{-1} \, Y \ , 
\end{equation}
with $Y$ as in \eqref{sol1}. Then $X\in\cA_{m -\mu+1}$, is symmetric
provided $A$ is symmetric and solves \eqref{hom1} modulo an
error term in $\cA_{m-1}$. More precisely \beq\label{hom3} \im [H_0, X] = A-\la A\ra
+ \cA_{m-1} \ .  \eeq
%\red{I get that the rest is in $\cA_{m - 1}$, check}\\
%fulfilling
%\begin{equation}
%\label{R.est}
% \norm{R}_{\cL(\cH^{-N}, \cH^N)} \leq C_{N,m}  \, \wp^m_M(A)\ . 
%\end{equation}
% \red{one has to prove that the terms in the r.h.s. of \eqref{hom3} are symmetric}
% \blue{I do not  understand what is the problem}
\end{lemma}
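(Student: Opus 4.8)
The plan is to prove the two items in sequence, with item (i) being essentially a direct computation using Assumption II and item (ii) requiring the careful use of symbolic calculus for $f'(K_0)$ together with the commutator expansion.

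\medskip

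\noindent\textbf{Proof of (i).} First I would differentiate the flow identity. Since $A(\tau) = \e^{\im\tau K_0}A\e^{-\im\tau K_0}$, one has $\frac{\di}{\di\tau}A(\tau) = \im[K_0,A(\tau)] = -\,{\rm ad}_{K_0}(A(\tau))$; here the derivative is taken in $\cA_m$, which is legitimate by Assumption II (the map $\tau\mapsto A(\tau)$ is $C^0_b$, and in fact one checks it is $C^1$ with the stated derivative, either by Assumption II applied to $[K_0,A]\in\cA_m$ or directly on $\cH^{+\infty}$ and then extending). Integrating by parts in \eqref{sol1}:
\[
\im[K_0,Y] = \frac{1}{2\pi}\int_0^{2\pi}\tau\,\frac{\di}{\di\tau}\bigl((A-\la A\ra)(\tau)\bigr)\,\di\tau
= \Bigl[\tau\,(A-\la A\ra)(\tau)\Bigr]_0^{2\pi} - \frac{1}{2\pi}\int_0^{2\pi}(A-\la A\ra)(\tau)\,\di\tau .
\]
Here I use that $(A-\la A\ra)(\tau)$ is $2\pi$-periodic (the remark after \eqref{average}), so the boundary term equals $2\pi\cdot\frac{1}{2\pi}(A-\la A\ra)(0) = A - \la A\ra$, while the last integral is exactly $\la A-\la A\ra\ra = \la A\ra - \la A\ra = 0$ since averaging the already-averaged operator gives it back and $\la\cdot\ra$ is linear. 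Hence $\im[K_0,Y] = A-\la A\ra$, which is \eqref{hom2}. That $Y\in\cA_m$: the integrand $\tau\,(A-\la A\ra)(\tau)$ lies in $C^0_b([0,2\pi],\cA_m)$ by Assumption II and Lemma \ref{normal.form}, so the Riemann integral converges in the Fréchet space $\cA_m$ and defines an element of $\cA_m$ (using that $\cA_m$ is a Fréchet space and the embedding into $\bigcap_s\cL(\cH^s,\cH^{s-m})$ is continuous, one can also realize $Y$ as a strong integral on each $\cH^{+\infty}$). Symmetry: if $A^*=A$ then $(A(\tau))^* = \e^{\im\tau K_0}A^*\e^{-\im\tau K_0} = A(\tau)$ since $K_0$ is selfadjoint, and $\la A\ra^* = \la A\ra$; as $\tau$ is real, $Y^* = Y$.

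\medskip

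\noindent\textbf{Proof of (ii).} The idea is that $\im[H_0,X] = \im[f(K_0),X]$ and we want this to reproduce $A-\la A\ra = \im[K_0,Y]$ up to lower order, so we need $\im[f(K_0),\,g(K_0)Y] \approx \im[K_0,Y]$ with $g(K_0)$ chosen to cancel the leading symbol of $f'(K_0)$. The key tool is the commutator expansion: writing $X = (1-\eta(K_0))(f'(K_0))^{-1}Y =: g(K_0)\,Y$, where $g(x) = (1-\eta(x))/f'(x)$, I would first note $g\in S^{1-\mu}$ (it is smooth, equals $0$ near the origin, and for large $x$, $f\in S^\mu_+$ gives $f'\in S^{\mu-1}$ elliptic so $1/f'\in S^{1-\mu}$; the appendix on classical symbols handles the product/reciprocal rules). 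By the analogue of Lemma \ref{ref:fc} (or by the hypotheses of the paper's symbolic calculus), $g(K_0)\in\cA_{1-\mu}$ and $f(K_0)\in\cA_\mu$; hence $X\in\cA_{1-\mu}\cdot\cA_m = \cA_{m-\mu+1}$ by the graded-algebra property, and $X^*=X$ when $A$ (hence $Y$) is symmetric, since $g(K_0)$ and $Y$ are selfadjoint and commute. Then I compute
\[
\im[H_0,X] = \im[f(K_0),\,g(K_0)Y] = \im\,g(K_0)\,[f(K_0),Y] = g(K_0)\,{\rm ad}_{f(K_0)}(Y),
\]
using that $g(K_0)$ commutes with $f(K_0)$. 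Now expand ${\rm ad}_{f(K_0)}(Y)$: I expect $f(K_0)$ and $K_0$ to be related by ${\rm ad}_{f(K_0)}(Y) = f'(K_0)\,{\rm ad}_{K_0}(Y) + (\text{lower order})$, i.e. the ``Leibniz at leading order'' for functions of a single operator — this is precisely the kind of symbolic expansion the paper's abstract framework is built to provide (it is the abstract shadow of the fact that the principal symbol of $[f(K_0),Y]$ is $\{f(k),y\} = f'(k)\{k,y\}$). More carefully, $f(K_0)Y - Yf(K_0)$ and $f'(K_0)(K_0 Y - Y K_0)$ have the same leading part; their difference lies in $\cA_{m+\mu-2}$. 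Granting this, $g(K_0)\,{\rm ad}_{f(K_0)}(Y) = g(K_0)f'(K_0)\,{\rm ad}_{K_0}(Y) + \cA_{(1-\mu)+(m+\mu-2)} = (1-\eta(K_0))\,{\rm ad}_{K_0}(Y) + \cA_{m-1}$. Finally $(1-\eta(K_0)){\rm ad}_{K_0}(Y) = {\rm ad}_{K_0}(Y) - \eta(K_0){\rm ad}_{K_0}(Y)$, and $\eta\in S^{-\infty}$ (it has compact support), so $\eta(K_0)\in\cA_{-\infty}$ and the correction term is smoothing, a fortiori in $\cA_{m-1}$. Since ${\rm ad}_{K_0}(Y) = \im[K_0,Y] = A-\la A\ra$ by part (i), we conclude $\im[H_0,X] = A-\la A\ra + \cA_{m-1}$, which is \eqref{hom3}.

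\medskip

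\noindent\textbf{Main obstacle.} The delicate point is the commutator-expansion step ${\rm ad}_{f(K_0)}(Y) = f'(K_0)\,{\rm ad}_{K_0}(Y) \bmod \cA_{m+\mu-2}$, i.e. giving meaning to "derivative of the symbol calculus" for functions of the single operator $K_0$ within the purely abstract algebra $\cA$. In the concrete pseudodifferential settings of the applications this is the standard composition formula, but here it must be derived from Assumptions I and II alone — presumably via the commutator expansion Lemma \ref{comX} applied after a suitable representation of $f(K_0)$, or via the commutator-expansion lemma of \cite{dege} referenced in the introduction, together with the symbol estimates for $f\in S^\mu_+$ collected in the appendix. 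Controlling that the error genuinely drops one full order (so that it lands in $\cA_{m-1}$ after multiplication by $g(K_0)\in\cA_{1-\mu}$) is where the hypothesis $\mu>1$ and the ellipticity of $f$ enter, and is the crux of the argument.
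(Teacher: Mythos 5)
Your proof of part (i) is correct and matches the paper's argument (integration by parts using $2\pi$-periodicity); one small typo: with the paper's convention ${\rm ad}_{K_0}(B):=\im[K_0,B]$ the derivative is $+\,{\rm ad}_{K_0}(A(\tau))$, not $-$, and the boundary term in your display is missing the $\tfrac{1}{2\pi}$ prefactor (you do reinsert it one line later). For part (ii) your route is essentially the paper's: the abstract justification you identify as the ``main obstacle'' is precisely Lemma~\ref{lem:gerard} in the appendix (the commutator-expansion lemma of \cite{dege}), which gives $[f(K_0),W]=\sum_{j\geq 1}\tfrac{1}{j!}f^{(j)}(K_0){\rm ad}_{K_0}^{j}W+R$ with the stated order drops; the paper applies it directly to $W=X$, whereas you pull $g(K_0)$ out of the commutator and apply it to $W=Y$, but the two are equivalent and yield the same $\cA_{m-1}$ error after the bookkeeping.

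There is one genuine gap, in your justification of the symmetry of $X$. You assert that ``$g(K_0)$ and $Y$ are selfadjoint and commute,'' but they do not commute in general: $Y$ is a weighted average over the flow of $K_0$ of the non-commuting operator $A-\la A\ra$, and only $\la A\ra$ itself is guaranteed to commute with functions of $K_0$ (Lemma~\ref{normal.form}). Thus $X=g(K_0)Y$ satisfies $X^*=Yg(K_0)\neq X$ in general. The correct fix is to replace $X$ by its symmetrization $\tfrac12(X+X^*)$; this costs nothing because the antisymmetric part $\tfrac12[g(K_0),Y]$ lies in $\cA_{(1-\mu)+m-1}=\cA_{m-\mu}$, strictly lower order than $X$, and its commutator with $H_0\in\cA_{\mu}$ lands in $\cA_{m-1}$, so \eqref{hom3} is unaffected. (The paper's own proof does not address the symmetry claim either, and its formula \eqref{sol2} has the same issue, so the remark applies there too; but the false commutativity claim is your addition and should be removed.)
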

We note for the sequel that  if $A \in \cA_m$ then   $X \in \cA_{m
  -(\mu-1)}$, namely   we have a gain of $\mu-1>0$ in the smoothing order.
\begin{proof}
{Assertion (i) is proved by  integration by parts using that $A(\tau)$ is $2\pi$-periodic.}\\
To prove (ii), first remark that by Assumption B and Lemma
\ref{rem:class.symb}, $f' \in S^{\mu-1}_+$,   thus it is
{different from zero} provided $ x \geq R$ is large enough. 
It follows that the function $\displaystyle{x \mapsto \frac{1-\eta(x)}{f'(x)} \in S^{-\mu+1}}$. Therefore, by  Lemma \ref{ref:fc}, the operator $\left(1-\eta(K_0)\right) \, (f'(K_0))^{-1} \in \cA_{-\mu+1}$. Finally  since $Y \in \cA_m$, it follows that $X \in  \cA_{m- \mu+1}$. \\
We show now that $X$ solves \eqref{hom3}. 
This is a consequence of  the commutator expansion Lemma.
Indeed fix $N \geq 2$, then  by Lemma \ref{lem:gerard} one has 
\begin{align*}
[H_0, X ] & = [f(K_0), X] = f'(K_0) [K_0, X]  + \sum_{2 \leq j \leq N}\frac{1}{j!} f^{(j)}(K_0) {\rm ad}_{K_0}^j (X) +  R_{N+1}(f, X) 
\end{align*}
with $R_{N+1}(f, X) \in \cA_{m -\mu+1+ [\mu]-N}\subset \cA_{m-1}$. \\
By Lemma \ref{rem:class.symb} and Assumption I, for any integer $j \geq 2$ one has that $f^{(j)}(K_0)\,  {\rm ad}_{K_0}^j (X) \in\cA_{m-\mu+1+\mu-j}\subset \cA_{m-1}$. Then we get
\begin{align*}
\im [H_0, X ]
& = \im f'(K_0) [K_0, X]  + A_{m-1}\\
  &\stackrel{\eqref{sol2}}{=} \left(1-\eta(K_0)\right) \im [K_0, Y] + A_{m-1} \\
&\stackrel{\eqref{hom2}}{=}  \left(1-\eta(K_0)\right) \, \left( A - \la A \ra \right) + A_{m-1} \ , 
\end{align*}
with $A_{m-1}\in\cA_{m-1}$. 
Now put $R := -\eta(K_0) \, \left( A - \la A \ra \right) $. Since $x
\mapsto \eta(x) \in S^{-\infty}$, $R$ is a smoothing operator and thus
$A_{m-1}+R\in\cA_{m-1}$.
\end{proof}

\paragraph{The second homological equation.}
{We want to solve  eq. \eqref{hom.5}. Using the quasiperiodicity assumption $V(t) = W(\omega t)$, we look for a quasiperiodic solution $X_1(t) = X(\omega t)$ of the equation }
  \beq\label{hom4}
\omega\cdot\partial_\theta X(\omega t) + \im [H_0, X(\omega t)] =
W(\omega t) - \lan W(\omega t)\ran \ .  \eeq In order to define
precisely $ \lan W(\omega t)\ran $, consider again the vectors $\bv_j$
and the frequencies $\tilde \nu_j$ of Lemma \ref{lem.gio}. First   remark
that, since $\nu=\sum_{j=1}^{\tilde d}\tilde \nu_j \bv_j$, 
one has $\nu\cdot K=\sum_{j=1}^{\tilde d}(K\cdot \bv_j)\tilde \nu_j$,
so that, defining 
\begin{equation}
\label{ktilde}
\tilde K_j:=K\cdot \bv_j\ ,\qquad 
\tilde K:=(\tilde K_1,...,\tilde K_{\tilde
  d})\ ,
\end{equation}
one has 
$$
H_0 \equiv \nu\cdot K=\tilde \nu\cdot\tilde K \ ,
$$
and  furthermore, since
$\bv_j$ has integer entries, 
{then the joint spectrum of $\tilde
K\equiv(\tilde K_1,...,\tilde K_{\tilde d})$ is s.t. spec$(\tilde K)\subset
\Z^{\tilde d}+\tilde \lambda$, therefore the map $\R^{\tilde
  d}\ni\tau\mapsto A(\tau):=e^{\im\tau\cdot \tilde K}Ae^{-\im\tau\cdot \tilde K}
$ is periodic in each of the $\tau_j$'s.}
Define now 
\begin{equation}
\label{average2}
\la A\ra := \frac{1}{(2\pi)^{\tilde d}}\int_{\T^{\tilde d}}{\rm e}^{\im \tau
  \cdot \tilde K} \, A \, {\rm e}^{-\im \tau\cdot \tilde K} \di \tau  \ .
\end{equation}

\begin{remark}
\label{normal.form2}
Let $A \in \cA_m$, $m \in \R$. Then by Assumption $II^\prime$,  $\la A \ra \in \cA_m$ and
\begin{equation}
\label{eq.nf2}
[\tilde K_j, \la A \ra ] = 0 \ , \qquad  1 \leq j \leq \tilde d
\ ; \qquad [K_0,\la A \ra ]=0\ .
\end{equation}
\end{remark}

\begin{lemma}
\label{lem:hom2}
Let $A \in C^\infty_b\left(\T^n, \cA_m\right)$, $m \in \R$. Provided
\eqref{non.res3} holds, the homological equation \eqref{hom4} has a
solution $X \in C^\infty(\T^n, \cA_m)$. Furthermore if $A$ is
symmetric then $X$ is symmetric as well.
\end{lemma}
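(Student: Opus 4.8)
The plan is to solve \eqref{hom4} by Fourier expansion in the angles $\theta\in\T^n$ (the quasiperiodic time variable) combined with the angle-like variables $\tau\in\T^{\tilde d}$ coming from the flow of $\tilde K$, as described in the Scheme of the proof. First I would expand
$$
W(\theta) = \sum_{k\in\Z^n} \widehat W_k \, \e^{\im k\cdot\theta} \ , \qquad
\widehat W_k \in \cA_m \ ,
$$
and look for $X(\theta)=\sum_{k}\widehat X_k\,\e^{\im k\cdot\theta}$. For each fixed $k$, the equation \eqref{hom4} becomes $\im(\omega\cdot k)\widehat X_k + \im[H_0,\widehat X_k] = \widehat W_k - \la\widehat W_k\ra\,\delta_{k,0}$ (the average term only enters at $k=0$ by construction of $\la\cdot\ra$). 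To invert $\omega\cdot k + \mathrm{ad}_{H_0}$ on each $\widehat W_k$, I would use the Egorov/Fourier device: since $H_0 = \tilde\nu\cdot\tilde K$ and $\tau\mapsto \e^{\im\tau\cdot\tilde K}\widehat W_k\,\e^{-\im\tau\cdot\tilde K}$ is $2\pi$-periodic in each $\tau_j$ (Assumption II$'$ together with Assumption A$'$ and the integer entries of $\bv_j$), one can further Fourier-expand in $\tau$:
$$
\e^{\im\tau\cdot\tilde K}\,\widehat W_k\,\e^{-\im\tau\cdot\tilde K}
= \sum_{\ell\in\Z^{\tilde d}} \widehat W_{k,\ell}\,\e^{\im\ell\cdot\tau} \ ,
$$
where $\widehat W_{k,\ell}$ satisfies $[\tilde K_j,\widehat W_{k,\ell}] = \ell_j \widehat W_{k,\ell}$, hence $\mathrm{ad}_{H_0}(\widehat W_{k,\ell}) = (\tilde\nu\cdot\ell)\,\widehat W_{k,\ell}$. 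Then the natural candidate is
$$
\widehat X_k := \sum_{\ell\,:\,(k,\ell)\neq 0} \frac{1}{\im(\omega\cdot k + \tilde\nu\cdot\ell)}\,\widehat W_{k,\ell} \ ,
$$
which by the Diophantine condition \eqref{non.res3} has small divisors bounded below by $\gamma/(|k|+|\ell|)^\kappa$; the $\ell=0,k=0$ mode is exactly what produces $\la W\ra$, so it is correctly excluded.

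The remaining work is to check that the formal solution lies in $C^\infty(\T^n,\cA_m)$, symmetry, and convergence. For smoothness in $\theta$, I would bound the $\cA_m$-seminorms $\wp_j^m(\widehat W_k)$ (for all $j$) by $C_N(1+|k|)^{-N}$ for every $N$, using $W\in C^\infty_b(\T^n,\cA_m)$ and integration by parts in $\theta$. Likewise, the map $\tau\mapsto \e^{\im\tau\cdot\tilde K}\widehat W_k\,\e^{-\im\tau\cdot\tilde K}$ belongs to $C^\infty_b(\T^{\tilde d},\cA_m)$ by Assumption II$'$(ii), so its Fourier coefficients $\widehat W_{k,\ell}$ satisfy $\wp_j^m(\widehat W_{k,\ell})\le C_{j,M}(1+|\ell|)^{-M}$ for every $M$, with constants uniform in $k$ (here one uses that the seminorm estimates on the conjugated family are uniform, which follows from the $C^\infty_b$ statement). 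Multiplying by the Diophantine lower bound costs only a polynomial factor $(|k|+|\ell|)^\kappa$, which is absorbed by the rapid decay, so $\sum_{k,\ell}\wp_j^m(\widehat X_k\text{-summand})(1+|k|)^{|\alpha|}<\infty$ for every $j,\alpha$: this gives $X\in C^\infty(\T^n,\cA_m)$ (one must also use that $\cA_m$ is a Fréchet space, Assumption I(ii), to conclude the series converges in $\cA_m$). Symmetry: if $W(\theta)^*=W(\theta)$ then $\widehat W_{-k}=\widehat W_k^*$, and since $\tilde K_j$ is self-adjoint one gets $(\widehat W_{k,\ell})^* = \widehat W_{-k,-\ell}$; because $\omega\cdot k+\tilde\nu\cdot\ell$ is real and odd under $(k,\ell)\mapsto(-k,-\ell)$, the $1/\im(\cdot)$ factor flips sign too, so the summands for $(-k,-\ell)$ are the adjoints of those for $(k,\ell)$, whence $X(\theta)^*=X(\theta)$ — and by Assumption I(vi), $\cA_m$ is closed under adjoints.

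Finally one verifies directly that the constructed $X$ solves \eqref{hom4}: applying $\omega\cdot\partial_\theta + \mathrm{ad}_{H_0}$ termwise to $X(\theta)=\sum_{k}\sum_{\ell}(\text{coeff})\e^{\im k\cdot\theta}$ multiplies the $(k,\ell)$ term by $\im(\omega\cdot k+\tilde\nu\cdot\ell)$, cancelling the denominator and recovering $\sum_{(k,\ell)\neq 0}\widehat W_{k,\ell}\,\e^{\im k\cdot\theta} = W(\theta) - \la W(\theta)\ra$ after resumming the $\tau$-Fourier series at $\tau=0$ and recognizing that the omitted $\ell=0$ part of the $k=0$ block is exactly $\la W(\theta)\ra$ by \eqref{average2}. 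I expect the main obstacle to be the uniformity of the $\cA_m$-seminorm bounds on $\widehat W_{k,\ell}$ jointly in $k$ and $\ell$ — one needs the $C^\infty_b$ regularity in $\theta$ and the Egorov property in $\tau$ to cooperate so that the double Fourier decay beats the polynomial small-divisor loss; all of this hinges essentially on Assumption II$'$ guaranteeing $C^\infty_b$ (not merely $C^0_b$) dependence of the conjugated operator on $\tau$, which is precisely why Assumption II had to be strengthened to II$'$ in the order-one case.
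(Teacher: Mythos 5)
Your overall approach is the same as the paper's: expand $W$ in a double Fourier series in the quasiperiodic angles $\theta\in\T^n$ and the auxiliary flow variables $\tau\in\T^{\tilde d}$ obtained by conjugating with $\e^{\im\tau\cdot\tilde K}$, observe that $\mathrm{ad}_{H_0}$ acts on the $(k,\ell)$ mode as multiplication by $\im\,\tilde\nu\cdot\ell$, divide by $\im(\omega\cdot k+\tilde\nu\cdot\ell)$, and control the resulting series by playing the rapid decay of the Fourier coefficients (coming from $C^\infty$ regularity in $(\theta,\tau)$, via Assumption II$'$) against the polynomial loss in the Diophantine lower bound. The decay, convergence and symmetry arguments you sketch are essentially correct.

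There is, however, a genuine error in your exclusion set. You exclude only the single mode $(k,\ell)=(0,0)$ from the definition of $\hat X_k$ and assert that ``the $\ell=0,k=0$ mode is exactly what produces $\la W\ra$.'' This misreads the definition \eqref{average2}: $\la W(\theta)\ra$ is the average over $\tau$ alone at each fixed $\theta$, hence it depends on $\theta$, and its $\theta$-Fourier expansion is $\sum_{k\in\Z^n}\hat W_{k,0}\,\e^{\im k\cdot\theta}$ --- i.e.\ the entire $\ell=0$ slice, not merely the constant $\hat W_{0,0}$. Accordingly one must set $\hat X^\sharp_{k,0}=0$ for every $k$, as the paper does. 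With your formula, for $k\neq 0$ you keep the term $\hat W_{k,0}/\bigl(\im\,\omega\cdot k\bigr)$ inside $\hat X_k$; applying $\omega\cdot\partial_\theta+\mathrm{ad}_{H_0}$ to it returns $\hat W_{k,0}\,\e^{\im k\cdot\theta}$ on the left, so what you construct actually solves
$\omega\cdot\partial_\theta X + \im[H_0,X] = W(\theta)-\hat W_{0,0}$,
which differs from the target right-hand side of \eqref{hom4} by $\la W(\theta)\ra-\hat W_{0,0}$, a nonzero operator in general. Note that the denominators $\omega\cdot k$ with $k\neq 0$ are bounded below by \eqref{non.res3}, so the superfluous terms cause no small-divisor trouble --- they simply make $X$ solve the wrong equation. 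Once you set $\hat X_{k,0}=0$ for all $k$, the rest of your argument goes through unchanged.
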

\begin{proof} 
For $A\in C^\infty(\T^{n},\cA_{m})$,  denote 
$A^\sharp(\theta, \tau) := {\rm e}^{\im\tau\cdot \tilde
  K}A(\theta){\rm e}^{-\im\tau\cdot \tilde K}$. By Assumption II$^\prime$,  $A^\sharp\in
C^\infty(\T^{n+\tilde d},\cA_{m})$. Since $A^\sharp$ is defined on $\T^{n+\tilde d}$, we can expand it  in  Fourier  series:
  $$
  A^\sharp(\theta,\tau) = \sum_{(k,\ell)\in\Z^{n+\tilde d}}\hat A^\sharp_{k,\ell} \, {\rm e}^{\im(k\cdot\theta +\ell\cdot \tau)}\,,
  $$
  where 
  $$
  \hat A^\sharp_{k,\ell} := \frac{1}{(2\pi)^{n+\tilde d}} \int_{\T^{n+\tilde d}}A^\sharp(\theta,\tau) {\rm e}^{-\im(k\cdot\theta +\ell\cdot \tau)} \, \di \theta \di \tau.
  $$
%  It is easy to show that for every $j \in \N$, \red{check}
%  \begin{equation}
%  \label{film}
%  \wp^m_j( \hat A^\sharp_{k,\ell} ) \leq C_{m,j,k,\ell} \sum_{ 1 \leq i \leq N \atop |\alpha| \leq M} \wp^{m}_i(\partial_\theta^\alpha A) \ . 
%  \end{equation}
Notice that 
\beq
\label{X}
A(\theta) \equiv A^\sharp(\theta, 0) = \sum_{(k,\ell)\in\Z^{n+\tilde d}}\hat A^\sharp_{k,\ell}{\rm e}^{\im k\cdot\theta}.
\eeq\\
Then, instead of solving directly the  homological  equation \eqref{hom4}, we solve 
\beq
\label{hom3.1}
\omega\cdot\partial_\theta X^\sharp(\theta, \tau) + \im [H_0, X^\sharp(\theta, \tau)] = \left(W - \lan W\ran \right)^\sharp(\theta, \tau) \ , \qquad \forall \theta \in \T^n \ , \ \ \forall \tau \in \T^{\tilde d} \ .
\eeq
Clearly if we find a smooth solution $X^\sharp (\theta, \tau)$ of this equation, then $X(\theta) := X^\sharp(\theta, 0)$ solves the original homological equation \eqref{hom4}.
Now remark that
\begin{align*}
\im [H_0, X^\sharp(\theta, \tau)] & = 
\sum_{j=1}^{\tilde d} \tilde \nu_j \im [\tilde K_j, X^\sharp(\theta, \tau)] = 
\sum_{j=1}^{\tilde d} \tilde \nu_j \left. \frac{d}{d\epsilon} \right|_{\epsilon = 0}  {\rm e}^{\im\epsilon \tilde K_j} \, X^\sharp(\theta, \tau){\rm e}^{-\im\epsilon  \tilde K_j} \\
& = \sum_{j=1}^{\tilde d} \tilde \nu_j \left. \frac{d}{d\epsilon} \right|_{\epsilon = 0}  X^\sharp(\theta, \tau+ \epsilon {\bf e}_j) \\
&= \sum_{(k,\ell)\in \Z^{n+\tilde d}}  \hat X^\sharp_{k,\ell} \left.\frac{d}{d\epsilon} \right|_{\epsilon = 0} \sum_{j=1}^{\tilde d} \tilde \nu_j  \, {\rm e}^{\im(k\cdot\theta +\ell\cdot (\tau + \epsilon {\bf e}_j))}\\
& = \sum_{(k,\ell)\in \Z^{n+\tilde d}} \im \tilde \nu \cdot \ell \,  \hat X^\sharp_{k,\ell}  {\rm e}^{\im(k\cdot\theta +\ell\cdot \tau )} \ .
\end{align*}
Therefore, expanding in Fourier series, equation \eqref{hom3.1} is equivalent to 
$$
\im (\omega\cdot k + \tilde \nu\cdot \ell)\hat X_{k,\ell}^\sharp = \widehat{(W-\lan W\ran)}_{k,\ell}^\sharp \ .
$$
Hence define
\beq\label{hom5}
   \hat X_{k,\ell}^\sharp = -\im \frac{\widehat{(W-\lan W\ran)}_{k,\ell}^\sharp}{(\omega\cdot k+ \tilde \nu\cdot \ell)},\qquad {\rm if}\;\; \omega\cdot k + \tilde \nu\cdot \ell \neq 0.
\eeq
Since $W^\sharp$  is in $C^\infty(\T^{n+\tilde d}, \cA_{m})$ we get that for  any $j,N\geq 1$ there exists $ C_{N,j}$ such that
{$$
\wp_j^m\left( \widehat{(W-\lan W\ran)}_{k,\ell}^\sharp\right) \leq C_{N,j}(\vert k\vert +\vert\ell\vert)^{-N}.
$$
}So we get  easily that  if  $X$  is  defined by $X(\theta) = X^\sharp(\theta, 0)$ and $X^\sharp$ has Fourier coefficients \eqref{hom5}  with  $X^\sharp_{k,0}=0$,   then $X\in C_b^\infty(\T^n,\cA_{m})$.
\end{proof}

\subsection{The iterative Lemma}
\label{sec:iterative}
We state and prove the iterative Lemma which is the main
step for the proof of our main results. 

\begin{theorem}
\label{thm:smooth1}  Assume that the assumptions of Theorem
\ref{thm:main} or of Theorem \ref{thm:hosc} are satisfied.\\ There
exist $\delta>0$ and a sequence $\{X_j(t)\}_{j\geq 1}$ of self-adjoint
(time-dependent) operators in $\cH$ with 
$X_j\in C_b^\infty(\R,\cA_{\r-(\mu-1)-(j-1)\delta})$, such that $\forall
j$, the inequalities \eqref{X(t).est0} are
satisfied; {for any $N \geq 1$} the change of variables
\begin{equation}
\label{phij}
\psi = e^{-\im X_1(t)} \ldots e^{-\im X_N(t)} \vf  
\end{equation}
transforms $H_0 + V(t)$  into the Hamiltonian 
\begin{equation}
\label{hN}
H^\n(t) := H_0 + Z^\n(t) + V^\n(t)
\end{equation}
where $Z^\n\in
C_b^\infty(\R, \cA_\r)$  commutes with $K_0$, i.e. $[ Z^\n, K_0 ] = 0$, while   $V^\n\in C_b^\infty(\R, \cA_{\r-N\delta})$.
Furthermore, under the assumptions of Theorem \ref{thm:hosc}, one has
\begin{equation}
\label{kz}
[Z^{(N)};\tilde K_j]=0\ ,\quad \forall j=1,...,\tilde d\ .
\end{equation}
\end{theorem}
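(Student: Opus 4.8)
The plan is to prove Theorem \ref{thm:smooth1} by induction on $N$, using at each step exactly the one-step conjugation described in the sketch of Section \ref{sec:sketch}. I would set $\delta$ as in \eqref{delta} under the assumptions of Theorem \ref{thm:main} and $\delta=\delta_*=1-\rho$ (as in \eqref{delta*}) under the assumptions of Theorem \ref{thm:hosc}, and prove the following inductive statement: after $j$ steps the Hamiltonian has the form $H^{(j)}(t) = H_0 + Z^{(j)}(t) + V^{(j)}(t)$ with $Z^{(j)}\in C^\infty_b(\R,\cA_\rho)$ commuting with $K_0$ (and with all $\tilde K_i$ in the Theorem \ref{thm:hosc} case), and $V^{(j)}\in C^\infty_b(\R,\cA_{\rho-j\delta})$, all semi-norms being bounded uniformly in $t$. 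The base case $j=0$ is trivial with $Z^{(0)}=0$, $V^{(0)}=V$.

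For the inductive step, given $H^{(j)}$, I would look for the generator $X_{j+1}(t)$ solving the homological equation with right-hand side $V^{(j)}(t)$: in the Theorem \ref{thm:main} case this is \eqref{hom1} with $A=V^{(j)}(t)$, solved via Lemma \ref{hom}(ii), which produces $X_{j+1}(t)\in C^\infty_b(\R,\cA_{\rho-j\delta-(\mu-1)})$ — note $\rho-j\delta-(\mu-1)<1$ since $\rho<\mu$, so Lemma \ref{comX} and Lemma \ref{MR} apply; in the Theorem \ref{thm:hosc} case one uses the quasiperiodic ansatz $X_{j+1}(t)=X(\omega t)$ and solves \eqref{hom4} via Lemma \ref{lem:hom2}, giving $X_{j+1}(t)\in C^\infty_b(\R,\cA_{\rho-j\delta})$, again of order $<1$. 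In both cases, symmetry of $V^{(j)}$ (inherited from $H(t)$ self-adjoint and the induction) gives symmetry of $X_{j+1}$. One checks \eqref{X(t).est0} using Lemma \ref{MR}(ii), since the relevant semi-norms of $X_{j+1}(t)$ are bounded uniformly in $t$ by \eqref{est.1}–\eqref{est.3} and the inductive bounds on $V^{(j)}$. Then applying Lemma \ref{T.1} with $X=X_{j+1}(t)$ and expanding $\e^{\im X_{j+1}}H^{(j)}\e^{-\im X_{j+1}}$ and the $\dot X_{j+1}$ integral via Lemma \ref{comX}, the terms organize — exactly as in \eqref{4.1.a}–\eqref{4.1.3} (resp. \eqref{4.1.b}–\eqref{hom.5}) — into $H_0 + Z^{(j+1)}(t) + V^{(j+1)}(t)$, where $Z^{(j+1)} = Z^{(j)} + \langle V^{(j)}\rangle$ still commutes with $K_0$ by Lemma \ref{normal.form} (resp. Remark \ref{normal.form2}), lies in $\cA_\rho$ since $\langle V^{(j)}\rangle\in\cA_{\rho-j\delta}\subseteq\cA_\rho$, and $V^{(j+1)}\in C^\infty_b(\R,\cA_{\rho-(j+1)\delta})$ collects all commutator and remainder terms, each of which has dropped by at least $\delta$ in order. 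Here one must also note that $Z^{(j)}$ commutes with $K_0$, hence $\langle Z^{(j)}\rangle = Z^{(j)}$, so the conjugation does not disturb the already-normalized part up to a further smoothing-order gain.

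After $N$ steps, composing the transformations gives \eqref{phij} and the form \eqref{hN} with $Z^{(N)}=\sum_{j=0}^{N-1}\langle V^{(j)}\rangle\in C^\infty_b(\R,\cA_\rho)$ commuting with $K_0$ (and \eqref{kz} in the Theorem \ref{thm:hosc} case, since each $\langle V^{(j)}\rangle$ commutes with every $\tilde K_i$ by Remark \ref{normal.form2}), and $V^{(N)}\in C^\infty_b(\R,\cA_{\rho-N\delta})$. The main obstacle — and the point requiring the most care — is the bookkeeping of the remainder: one must verify that \emph{every} term appearing after the conjugation (the nested commutators $\mathrm{ad}_{X_{j+1}}^\ell$ of $H_0$, of $Z^{(j)}$, and of $V^{(j)}$, the Taylor remainder $R_M$ of Lemma \ref{comX}, and the $\dot X_{j+1}$ contributions) genuinely has order $\le \rho-(j+1)\delta$, uniformly in $t$; this is where the choice of $\delta$ as the minimum in \eqref{delta} (resp. \eqref{delta*}) is forced, and where one uses that $\dot X_{j+1}$, being a $t$-derivative of a $C^\infty_b$ family, still has bounded semi-norms of the same order as $X_{j+1}$. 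A secondary subtlety is ensuring the $C^\infty_b$-in-$t$ regularity propagates: this follows because $X_{j+1}(t)$ depends smoothly and boundedly on $t$ (via $V^{(j)}\in C^\infty_b$ and the explicit formulas \eqref{sol1}–\eqref{sol2} or \eqref{hom5}), the flows $\e^{\pm\im\tau X_{j+1}(t)}$ depend smoothly on $(\tau,t)$ by Assumption II (resp. II$^\prime$) together with Lemma \ref{MR}, and $C^\infty_b$ is stable under the algebra operations by Assumption I.
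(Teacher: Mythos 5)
Your proposal is correct and takes essentially the same approach as the paper: the paper also proceeds by recurrence, invoking Lemmas \ref{T.1}, \ref{comX}, \ref{MR}, \ref{hom}, \ref{lem:hom2}, \ref{normal.form} and Remark \ref{normal.form2}, with $\delta$ chosen as in \eqref{delta} (resp.\ \eqref{delta*}). The only detail the paper spells out that you leave as an implicit check is the order bookkeeping for the conjugated normalized part, namely $e^{\im X_{j+1}}Z^{(j)}e^{-\im X_{j+1}}-Z^{(j)}\in\cA_{\rho-(\mu-1)-j\delta+\rho-1}\subset\cA_{\rho-(j+1)\delta}$, which is exactly what forces the constraint $\delta\leq\mu-\rho$ built into \eqref{delta}.
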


\begin{proof}[Proof]
It is proved by recurrence. Consider first the assumptions of Theorem
\ref{thm:main}. Using Lemmas \ref{T.1}, \ref{comX}, \ref{MR},
\ref{hom}, \ref{lem:hom2} one gets the theorem for $N=1$ with
$Z^{(1)}(t) := \lan V(t)\ran \in C^\infty_b(\R, \cA_\r)$. By Lemma
\ref{normal.form}, $[Z^{(1)}(t), K_0] = 0$. In this case $\delta$ can be taken 
as in \eqref{delta}. \\ The iterative step $N\to N+1$ is proved
following the same lines, just adding the remark that {$e^{\im
  X_{N+1}}Z^{(N)}e^{-\im X_{N+1}}-Z^{(N)}\in
\cA_{\rho-(\mu-1)-N\delta+\rho-1}$ $\subset \cA_{\rho-(N+1)\delta}$.}

Under the assumptions of Theorem \ref{thm:hosc}, the result is proved
along the same lines, with $\delta$ as in 
\eqref{delta*}. The property \eqref{kz} follows by Remark
\ref{normal.form2}.
\end{proof}

\subsection{Proof  of Theorem \ref{thm:main}}
By Theorem \ref{thm:smooth1}, the operator $H(t)$ is conjugated to
$H^\n(t)$. 
So we apply Theorem 1.5 of \cite{MaRo}
  to the 
Schr\"odinger equation for $H^{(N)}(t)$.
 More precisely we have
$$
[H^\n(t), K_0] = [V^\n(t), K_0] \in C^0_b(\R, \cA_{\r-N\delta}) 
$$
and thus,  by choosing $N$ large enough, \eqref{maro.est} ensures the result for the propagator $\cU_N(t,s)$ of $H^\n(t)$.
%\green{ {\tt I WOULD ELIMINATE UNTILL THE END OF GREEN}\eqref{maro.cond} is fulfilled with $\kappa = \rho - N\delta$; then Theorem \ref{thm:maro} implies that  for any $\varepsilon >\frac{r}{1-\r +N\delta}$   there exists $C_{N,\varepsilon, r}>0$  such that {the propagator $\cU_N(t,s)$ of $H^\n(t)$ fulfills}
%\begin{equation}
%\label{est.un}
%\Vert \cU_N(t,s)\psi_s\Vert_r \leq C_{N,\varepsilon, r}\lan t-s\ran^\varepsilon\Vert\psi_s\Vert_r, \quad \forall t, s\in\R.
%\end{equation}
%Taking $N$ large enough, one gets \eqref{pr1} for the propagator
%$\cU_N(t,s)$.}

Now since $H(t)$ is conjugated to $H^\n(t)$, $H(t)$
generates a propagator $\cU(t,s)$ in the Hilbert space scale
$\cH^r$ unitarily equivalent to the propagator $\cU_N(t,s)$. 
Therefore, using also \eqref{X(t).est0}, 
the propagator  $\cU(t,s)$ fulfills  \eqref{pr1}, thus yielding  the result.\qed

\section{Applications}
In this section we prove Corollary \ref{Zoll}, Corollary \ref{Anha} and Corollary \ref{cor:rs.zoll}.

\subsection{Zoll manifolds}
 \label{zoll.app}
    To begin with we show how to put ourselves in the abstract setup. So first we  define the operator $K_0$. This will be achieved by exploiting the spectral properties of the operator $\triangle_g$.     
     Applying  Theorem 1  of Colin de Verdi\`ere \cite{cdv},  there  exists a pseudodifferential operator $Q$  of order $-1$, commuting with $\triangle_g$,
       such that ${\rm Spec}[\sqrt{\triangle_g} + Q]\subseteq \N+\lambda$  with some $\lambda\geq 0$. We can assume $\lambda>0$. {If not, denoting $\Pi_-$ the projector on the non positive eigenvalues, we replace $Q$ by $Q+C\Pi_-$ with $C>0$ large enough; remark that $\Pi_-$ commutes with $\triangle_g$ and is a smoothing operator.}
        So we define 
\begin{equation}
\label{zoll.k0}
K_0 := \sqrt{\triangle_g} + Q \ , \qquad H_0:=K_0^2  \ .
\end{equation}   
Now remark that 
         $H_0 = \triangle_g + 2Q\sqrt{\triangle_g} +Q^2$, 
          so  we have
        $$
        H_0=\triangle_g  + Q_0
        $$  
        where $Q_0$ is a pseudo-differential operator  of order $0$ and therefore
         $$
         H(t)= \triangle_g + V(t)  \equiv H_0 + \tilde V(t) \ ,
         \qquad \tilde V(t):= V(t) -Q_0
         $$
and we are in the setup of the abstract Schr\"odinger  equation \eqref{LS} with the new perturbation $\tilde V(t)$. 

Remark that $\cH^r := {\rm Dom } ((K_0)^r)$, $r \geq 0$,  coincides with the classical Sobolev space $H^r(M)$ and one has the equivalence of norms
$$
c_r \, \norm{\psi}_{H^r(M)} \leq \norm{\psi}_r \leq C_r \, \norm{\psi}_{H^r(M)} \ , \qquad \forall r \in \R \ . 
$$
We define the class $\cA_m$ to be the class of pseudodifferential operators whose (real valued) symbols belong to $S_{\rm cl}^m(M)$. 
{Clearly  $K_0 \in \cA_1$ (recall that $\Pi_-$ is a smoothing operator).}
It is classical that Assumptions I and II are fulfilled \cite{ho}.

 \begin{proof}[Proof of Corollary \ref{Zoll}]
 Assumption A holds true by construction of $K_0$, Assumption B holds  with $f(x) = x^2$ and therefore $\mu:=2$.
    Since $V(t)$ is a pseudodifferential operator of order $\r <2$ whose symbol belongs to $C^\infty_b(\R, S^\r_{\rm cl}(M))$,  one verifies easily, using pseudodifferential calculus (in particular estimates \eqref{est.1}--\eqref{est.3}), that  $\tilde V(t) = V(t) -Q_0 \in C^\infty_b(\R, \cA_\r)$.    Hence the   corollary  follows from Theorem \ref{thm:main}.
      \end{proof}

      \subsection{Anharmonic oscillators}
   We recall that for a symbol  $a$ (in the sense of Definition \ref{symbol.ao})  we denote by $a(x,D_x)$ its Weyl quantization
        \begin{equation}
        \label{weyl}
   \Big( a(x, D_x) \psi \Big)(x) := \frac{1}{2\pi} \iint_{y, \xi \in \R} {\rm e}^{\im (x-y)\xi} \, a\left( \frac{x+y}{2}, \xi \right) \, \psi(y) \, \di y \di \xi \ . 
        \end{equation}
    We endow $S^\r_\ao$ (defined in Definition \ref{symbol.ao}) with the family of seminorms
\begin{equation}
\label{seminorm}
\wp^\r_j(a) := \sum_{|\alpha| + |\beta| \leq j}
\ \ \sup_{(x, \xi) \in \R^{2}} \frac{\left|\partial_x^\alpha \, \partial_\xi^\beta a(x,\xi)\right|}{ \left[\tk_0(x,\xi)\right]^{\r-\frac{k\beta +l\alpha}{k+l}} } \ , \qquad j \in \N \ . 
\end{equation}
{The operator $K_0$ is defined using the spectral properties of the
Hamiltonian $H_{k,l}$ defined in \eqref{Hkl} that were studied in
detail in \cite{hero}; in that paper an accurate Bohr-Sommerfeld rule
for the the eigenvalues of $H_{k,l}$ was obtained and the
existence of a pseudodifferential operator $Q$ of order\footnote{
  Actually \cite{hero} proves that $Q$ has a symbol which is
  quasi-homogeneous of degree $-k-l$.  Here a symbol $f(x, \xi)$ is
  quasi-homogeneous of degree $m$ if
    $$f(\lambda^l x, \lambda^k \xi) = \lambda^m f(x,\xi) \ , \quad \forall \lambda >0 \ , \quad \forall (x, \xi) \in \R^2 \setminus \{ 0\} \ . $$
It is classical \cite{hero,hero2} that if $f$ is quasi-homogeneous of degree $m$, then it is a symbol in the class $S^{m/(k+l)}_\ao$.
} $-1${ such that ${\rm
  Spec}[H_{k,l}^{\frac{k+l}{2kl}}+Q]\subseteq \N+\lambda$
  ($\lambda\geq 0$) was proven.
}    Note that for our numerology $H_{k,l}^{\frac{k+l}{2kl}}$ is of
  order 1 by definition.}
%   and $\lambda\geq 0$ such that ${\rm
%  Spec}[H_{k,l}^{\frac{k+l}{2kl}}+Q]\subseteq \N+\lambda$. Furthermore
%the symbol of $Q$  is quasi-homogeneous\footnote{in this context a
%  symbol $f(x, \xi)$ is quasi-homogeneous of degree $m$ if
%    $$f(\lambda^l x, \lambda^k \xi) = \lambda^m f(x,\xi) \ , \quad \forall \lambda >0 \ , \quad \forall (x, \xi) \in \R^2 \setminus \{ 0\} \ . $$
%      of degree $-k-l$. 
Therefore we define
    $$
    K_0 := H_{k,l}^{\frac{k+l}{2kl}}+Q \ , \qquad H_0 := K_0^{\frac{2kl}{k+l}} \ .
    $$
   We define $\cA_m$ to be the class of pseudodifferential operator with symbols in $S^m_\ao$. Notice that by construction $\cA_m\subset \cL(\cH^s,\cH^{s-m})$ for all $s\in\R$.
    It is classical that $\cA$ fulfills Assumptions I and II (see \cite{hero, hero2}).\\ 
 On the other hand Assumptions A and B are fulfilled with $\mu := \frac{2kl}{k+l} > 1$ (as $k+l \geq 3$). Furthermore one has
     $$H_{k,\ell} = \left( K_0 - Q\right)^{\frac{2kl}{k+l}} = K_0^{\frac{2kl}{k+l}} + Q_0$$  where   $Q_0$  is a pseudodifferential operator of order  $\frac{2kl}{k+l }- 2$. 
Therefore     
    $$
    H(t) = H_{k,l} + V(t)  \equiv H_0 + \tilde V(t) \ , \qquad
    \tilde V(t) := V(t) + Q_0
    $$
    and once again we are in the setup of the abstract Schr\"odinger equation \eqref{LS} with the new perturbation $\tilde V(t)$.

   \begin{proof}[Proof of Corollary \ref{Anha}] 
        Since $V(t)$ is a pseudodifferential operator of order $\r<\frac{2kl}{k+l}$ whose symbol and its time-derivatives have uniformly (in time) bounded seminorms,   one verifies  that  $\tilde V(t) = V(t)+Q_0 \in C^\infty_b(\R, \cA_\r)$.    Hence the   corollary  follows from Theorem \ref{thm:main}.
    \end{proof}

\subsection{Relativistic Schr\"odinger equation on Zoll manifolds}
The proof of Corollary \ref{cor:rs.zoll} is along the lines developed
in Subsection \ref{zoll.app}. Let us remark that the operator
$\sqrt{\triangle_g +\mu}-\sqrt{\triangle_g}$ is of order $-1$. Hence,
defining $K_0$ as in \eqref{zoll.k0}, one has again $
\sqrt{\triangle_g +\mu} = K_0 + Q_0$ with $Q_0$ of order $-1$. Therefore
$$
H(t) = \sqrt{\triangle_g +\mu} + V(\omega t, x, D_x) = K_0 + \tilde V(\omega t)
$$
with the new perturbation $\tilde V(\omega t)\in C^\infty(\T^n, \cA_\r)$.\\

This time we verify Assumptions II$^\prime$, A$^\prime$ and B$^\prime$
with $d=1$ and $K_1 = K_0 = H_0$. Concerning the nonresonance
condition just remark that in this case we have that $\nu$ has only
one component given by 1.

Thus Theorem
\ref{thm:hosc} immediately yields Corollary \ref{cor:rs.zoll}.

\appendix 

\section{Technical lemmas on classical symbols}\label{sec:tec}

We begin with the following lemma whose proof is completely standard (and we skip it)

  \begin{lemma}
  \label{rem:class.symb}
 \begin{itemize}
\item[(i)] If $f \in S^a$, $g \in S^b$ then $f g \in S^{a+b}$.
\item[(ii)] If $f \in S^a$, then $f^{(j)} \in S^{a-j}$.
\item[(iii)]    If $x \mapsto \eta(x)$ is a smooth cut-off function on $\R$, then $\eta \in S^{- \infty}$.
\item[(iv)]  The function $f(x) = x^{a}$, $a >0$, is a classical elliptic symbol in $S^a_+$.
\end{itemize}
\end{lemma}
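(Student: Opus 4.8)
All four assertions follow directly from Definition \ref{def:cl.sy} by exhibiting the asymptotic expansion of the function at hand and estimating its remainder, so the plan is simply to carry this out for each item. The two trivial ones I would dispatch first. For (iv), read $x^{a}$ as a fixed smooth function on $\R$ that coincides with $x^a$ for $x\geq 1$ (the class $S^\rho$ only constrains the behaviour as $x\to+\infty$); its expansion is the single-term one $c_0=1$, $c_j=0$ for $j\geq1$, so for every $N\geq 1$ the remainder $x^a-\sum_{0\leq j\leq N-1}c_jx^{a-j}$ vanishes identically on $x\geq1$ and every defining inequality holds trivially; since $f$ is real and $c_0=1>0$, $f\in S^a_+$. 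For (iii), a smooth cut-off $\eta$ vanishes for $x$ large, say for $x\geq R_0$; taking all $c_j=0$, the derivative $\partial_x^k\eta$ is zero for $x\geq R_0$ and bounded on the compact interval $[1,R_0]$, while $|x^{m-N-k}|$ is bounded below there by a positive constant as soon as $m-N-k<0$; hence for every $m\in\R$ and all $k,N$ one has $|\partial_x^k\eta(x)|\leq C_{k,N}|x^{m-N-k}|$ on $x\geq1$, i.e. $\eta\in S^m$, and since this holds for every $m<0$ we get $\eta\in S^{-\infty}$.

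For (ii) I would differentiate the expansion of $f$ termwise. If $f\in S^a$ has coefficients $\{c_j\}$, set $c'_j:=(a-j)c_j$; then for every $N\geq1$
\[
f'(x)-\sum_{0\leq j\leq N-1}c'_j\,x^{(a-1)-j}=\frac{d}{dx}\Bigl(f(x)-\sum_{0\leq j\leq N-1}c_j\,x^{a-j}\Bigr),
\]
so its $k$-th derivative coincides with the $(k+1)$-st derivative of the $f$-remainder, which by $f\in S^a$ (used with $k+1\geq1$ and the same $N$) is $\leq C_{k+1,N}|x^{a-N-(k+1)}|=C_{k+1,N}|x^{(a-1)-N-k}|$ for $x\geq1$. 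Hence $f'\in S^{a-1}$, and iterating $j$ times gives $f^{(j)}\in S^{a-j}$.

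For (i) I would multiply the expansions. With $f\in S^a$, $g\in S^b$ and coefficients $\{c^f_i\},\{c^g_j\}$, put $c^{fg}_\ell:=\sum_{i+j=\ell}c^f_i c^g_j$, so $c^{fg}_0=c^f_0c^g_0\geq0$. Fix $k,N$ and pick a truncation order $M\geq N$; write $f=F_M+R^f_M$, $g=G_M+R^g_M$ with $F_M(x):=\sum_{0\leq i<M}c^f_i x^{a-i}$ (and $G_M$ analogously), so that $\partial_x^p F_M=O(|x^{a-p}|)$, $\partial_x^pG_M=O(|x^{b-p}|)$ and, for $x\geq1$ and all $p\geq0$, $|\partial_x^pR^f_M(x)|\leq C|x^{a-M-p}|$, $|\partial_x^pR^g_M(x)|\leq C|x^{b-M-p}|$. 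Expanding
\[
fg=F_MG_M+F_MR^g_M+R^f_MG_M+R^f_MR^g_M,
\]
the monomials in $F_MG_M$ with $i+j<N$ reassemble into $\sum_{\ell<N}c^{fg}_\ell x^{a+b-\ell}$, while the leftover monomials have exponent $\leq a+b-N$, hence are $O(|x^{a+b-N-k}|)$ after $k$ differentiations; and, by the Leibniz rule together with the bounds just listed, each of the three remaining terms is $O(|x^{a+b-M-k}|)$, hence $O(|x^{a+b-N-k}|)$ on $x\geq1$ (as $M\geq N$). Collecting, $\bigl|\partial_x^k\bigl(fg-\sum_{\ell<N}c^{fg}_\ell x^{a+b-\ell}\bigr)\bigr|\leq C_{k,N}|x^{a+b-N-k}|$, i.e. $fg\in S^{a+b}$.

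There is no real obstacle here: the only point deserving a word of care is the $p=0$ case of the remainder estimates used in (i), which is obtained in the standard way by taking the truncation order $M$ large enough that the exponent $a-M$ (resp. $b-M$) is negative, so that the first-derivative bound on the remainder is integrable near $+\infty$; integrating it from $+\infty$ recovers the $0$-th order bound, and since the remainder then tends to $0$ no integration constant is produced. Everything else is elementary bookkeeping of orders, which is why the proof is omitted in the main text.
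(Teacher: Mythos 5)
The paper itself offers no proof (it declares the lemma ``completely standard'' and skips it), so there is nothing to compare to route-wise; your elementary verification from Definition~\ref{def:cl.sy} is exactly the intended approach, and items (iii) and (iv) are handled correctly. There is, however, one point where your argument implicitly uses more than Definition~\ref{def:cl.sy} literally grants. As written, the defining estimates run over $k\geq 1$ only, so there is \emph{no} zeroth-order bound on the remainder. In your treatment of the $p=0$ case in (i) you integrate the $k=1$ estimate from $+\infty$ and then assert that ``no integration constant is produced''; but integrating $|\partial_x R^f_M|\leq C|x|^{a-M-1}$ only yields $R^f_M(x)=R^f_M(+\infty)+O(|x|^{a-M})$, and the vanishing of $R^f_M(+\infty)$ is precisely what the $k\geq1$ definition does not supply. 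Taken at face value this is not merely a gap in your write-up: e.g. $g(x)=1+\e^{-x}$ satisfies the $k\geq1$ estimates for every order $m$ (with all $c_j=0$), yet $x\cdot g$ is not a classical symbol of order $m+1$ for $m$ sufficiently negative, so item (i) itself fails under the literal reading. Evidently the definition is meant in the usual polyhomogeneous sense with $k\geq 0$, in which case the zeroth-order bound is a hypothesis and your $p=0$ discussion becomes unnecessary. A similar small oversight affects your (ii): the coefficients $c'_j=(a-j)c_j$ you produce need not have $c'_0\geq 0$ (take $a<0$, $c_0>0$, or higher $j$), so the normalisation $c_0\geq 0$ in Definition~\ref{def:cl.sy} would rule out $f^{(j)}\in S^{a-j}$; again this is clearly a side condition meant only to single out the elliptic class $S^\rho_+$ and should not be imposed on general classical symbols. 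With those two readings of the definition understood, your proof is correct and complete.
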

\begin{lemma}
\label{ref:fc}
If $g \in S^\mu$, $\mu \in \R$, then $g(K_0) \in \cA_\mu$.
\end{lemma}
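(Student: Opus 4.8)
The plan is to reduce the statement to the case $\mu \le 0$ by factoring out a power of $K_0$, and then to prove the bounded case using a Helffer--Sj\"ostrand type almost-analytic functional calculus together with Assumption I. First I would observe that if $g \in S^\mu$ with $\mu > 0$, I may write $g(x) = x^{\mu} \, h(x)$ on $[1,\infty)$ where $h(x) = g(x) x^{-\mu}$; after multiplying by a cutoff $1-\eta$ that kills the region $x \le R$ (where $\eta \in S^{-\infty}$ by Lemma \ref{rem:class.symb}(iii), so $\eta(K_0)$ is smoothing and lands in every $\cA_{m'}$), Lemma \ref{rem:class.symb}(i)--(ii) shows $h \in S^{0}$. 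Since $K_0^\mu \in \cA_\mu$ by Assumption I(i), and since a product of an element of $\cA_0$ with an element of $\cA_\mu$ lies in $\cA_\mu$ by the graded-algebra property Assumption I(iii), it suffices to treat the case $g \in S^{m}$ with $m \le 0$; indeed by the same splitting it even suffices to treat $g \in S^0$ bounded, because $S^m \subset S^0$ for $m \le 0$.

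For bounded $g$, the key step is to represent $g(K_0)$ by an almost-analytic extension. Choose $\tilde g \in C^\infty_c(\C)$ (or with controlled decay) with $\tilde g|_\R = g$ and $\bar\partial \tilde g(z)$ vanishing to infinite order on the real axis, with bounds $|\bar\partial \tilde g(z)| \le C_K |\operatorname{Im} z|^K$ for every $K$; then
\begin{equation}
\label{hs}
g(K_0) = \frac{1}{\pi} \int_\C \bar\partial \tilde g(z) \, (z - K_0)^{-1} \, \di x \, \di y \ .
\end{equation}
The strategy is then to show that the integrand, as an operator, lies in $\cA_0$ with seminorms integrable against $\bar\partial\tilde g$, and to invoke the closedness of $\cA_0$ as a Fr\'echet space (Assumption I(ii)) under the convergent Bochner integral \eqref{hs}. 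Concretely, $(z-K_0)^{-1}$ is bounded on $\cH^s$ with norm $O(1/|\operatorname{Im} z|)$ because $K_0$ is selfadjoint on each $\cH^s$ (conjugating by $K_0^s$), and to control the $\cA_0$-seminorms $\wp^0_j$ of $(z-K_0)^{-1}$ I would use the resolvent identity and commutator expansions: $[A,(z-K_0)^{-1}] = (z-K_0)^{-1}[K_0,A](z-K_0)^{-1}$, which by Assumption I(iv) and iteration produces, for the $j$-th seminorm, bounds of the form $C_j |\operatorname{Im} z|^{-n_j}$ for some finite $n_j$. The infinite-order vanishing of $\bar\partial\tilde g$ then makes $\int_\C |\bar\partial\tilde g(z)| \, \wp^0_j((z-K_0)^{-1}) \,\di x\,\di y < \infty$ for every $j$, so \eqref{hs} converges in $\cA_0$ and defines an element of $\cA_0$.

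The main obstacle I anticipate is making the $\cA_m$-seminorm bounds on the resolvent $(z-K_0)^{-1}$ precise: one must check that the abstract seminorms $\wp^m_j$ (which in the examples are not simply the operator norms $\Vert\cdot\Vert_{m,s}$, as footnoted after Assumption I) really are controlled by powers of $1/|\operatorname{Im} z|$ through finitely many applications of the commutator and product continuity estimates \eqref{est.2}--\eqref{est.3}. An alternative, cleaner route that sidesteps quantitative resolvent seminorm bounds is to use Assumption I(v): approximate $g$ by a sequence $g_N$ agreeing with $g$ to high order, write $g(K_0) = g_N(K_0) + (g - g_N)(K_0)$ where the second term is $N$-smoothing by the spectral theorem and the order-$m$ decay of $g-g_N$, and identify $g_N(K_0) \in \cA_m$ for, say, $g_N$ a suitable rational function of $K_0$ built from powers and resolvents at fixed points off $\R$ (finitely many factors of $(z_k - K_0)^{-1} \in \cA_0$, handled by one application of the commutator estimate each). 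Then Assumption I(v) upgrades the decomposition to $g(K_0) \in \cA_m$. I would likely present the Helffer--Sj\"ostrand argument as the main line and mention the approximation/Assumption I(v) variant as the way to keep the seminorm bookkeeping finite.
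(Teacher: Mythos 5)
The paper's proof is much simpler and more elementary than your proposal, and exploits directly the structure of Definition~\ref{def:cl.sy}: a classical symbol of order $\mu$ is, by definition, asymptotic to a sum $\sum_{j\geq 0}c_j x^{\mu-j}$ with remainder $R(x)$ bounded by $C_N|x|^{\mu-N}$ for every $N$. The paper therefore writes $g(K_0) = \sum_{0\leq j\leq N-1} c_j K_0^{\mu-j} + R(K_0)$; the finite sum is in $\cA_\mu$ by Assumption~I(i)--(ii), and $R(K_0)$ (defined via the spectral theorem) is as smoothing as one wishes since $N$ is arbitrary. Then Assumption~I(v) -- closedness of $\cA_\mu$ under smoothing perturbations -- upgrades the decomposition to $g(K_0)\in\cA_\mu$. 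The only nontrivial operators of functional type that the argument needs to place in $\cA$ are the powers $K_0^{\mu-j}$, and those are \emph{given} by Assumption~I(i).

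Your Helffer--Sj\"ostrand route (and also your ``alternative'' variant, which still uses resolvent factors $(z_k-K_0)^{-1}$) has a genuine gap in this abstract setting: there is no assumption guaranteeing that $(z-K_0)^{-1}\in\cA_{-1}$ or $\cA_0$, with or without controlled seminorms. The only functions of $K_0$ that Assumption~I places inside $\cA$ are the powers $K_0^m$. In a concrete pseudodifferential calculus the resolvent is handled by a parametrix construction, but that is exactly the kind of calculus-specific input the paper is trying to abstract away, and Lemma~\ref{ref:fc} is itself the statement meant to provide functional calculus inside $\cA$ -- one cannot assume a piece of it to prove it. Your commutator argument for bounding $\wp^0_j((z-K_0)^{-1})$ is circular for the same reason: to iterate $[A,(z-K_0)^{-1}] = (z-K_0)^{-1}[K_0,A](z-K_0)^{-1}$ and stay inside $\cA$ you already need to know the resolvent is in the algebra. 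The fix is precisely the paper's observation: replace the rational approximants / almost-analytic extension by the partial sums of the classical symbol expansion, which consist only of powers of $K_0$. Your reduction to $\mu\leq 0$ is then also unnecessary; the powers $K_0^{\mu-j}$ live in $\cA_{\mu-j}\subset\cA_\mu$ directly.
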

\begin{proof}
By definition $g(x) = \sum_{0 \leq j \leq N-1} c_j x^{\mu - j} + R(x)$, $|R(x)| \leq C_N |x^{\mu - N}|$ for $|x| \geq 1$.
Then $g(K_0) = \sum_{0 \leq j \leq N-1} c_j K_0^{\mu - j} + R(K_0)$, where $R(K_0)$ is defined by functional calculus as
$R(K_0):= \int_0^\infty R(\lambda) \di E_{K_0}(\lambda) $, 
$\di E_{K_0}(\lambda)$ being the spectral resolution of $K_0$.
By Assumption I,  $\sum_{0 \leq j \leq N-1} c_j K_0^{\mu - j} \in \cA_\mu$ while the operator  $R(K_0)$ is $N$-smoothing (in the sense of Definition \ref{smoothing}). Since $N$ can be taken arbitrarily large, $g(K_0)$ fulfills Assumption I (v), therefore it belongs to $\cA_\mu$. The other properties are easily verified using such decomposition.
\end{proof}
 %We have the following lemma: 

Finally, we recall a commutator expansion lemma 
 following from   \cite[Lemma C.3.1]{dege}:
\begin{lemma}
\label{lem:gerard}
Let $f\in S^\r_+$  and $W\in \cA_m$. Then for all $N\geq [\r]$ we have
$$
[f(K_0), W] = \sum_{1\leq j\leq N}\frac{1}{j!}f^{(j)}(K_0){\rm ad}_{K_0}^jW + R_{N+1}(f,K_0,W),
$$
where 
$R_{N+1}(f,K_0,W) \in\cA_{[\r]+m-N}$.\\
Moreover if $W$ depends on time $t$ with uniform estimates in $\cA_m$ then it is also true for $R_{N+1}(f,K_0, W)$.
\end{lemma}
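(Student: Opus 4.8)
The plan is to prove Lemma \ref{lem:gerard} by a standard iterated-commutator argument, reducing everything to the scalar symbolic calculus recorded in Lemma \ref{rem:class.symb} together with the functional calculus of the self-adjoint operator $K_0$. First I would recall the resolvent/almost-analytic-extension representation of $f(K_0)$: writing $\tilde f$ for an almost analytic extension of $f \in S^\rho_+$ to a complex neighbourhood of $\R$, one has
\begin{equation*}
f(K_0) = \frac{1}{2\pi\im}\int_{\C} \bar\partial \tilde f(z)\,(z - K_0)^{-1}\,\di z \wedge \di\bar z \ ,
\end{equation*}
with $\bar\partial\tilde f$ decaying fast on $\R$ because $f\in S^\rho$. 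This is exactly the setting of \cite[Lemma C.3.1]{dege}, so the role of the present lemma is only to transcribe that result into our graded-algebra language.

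The key algebraic identity is the commutator expansion of the resolvent: for $W\in\cA_m$,
\begin{equation*}
[(z-K_0)^{-1}, W] = \sum_{j=1}^{N} (z-K_0)^{-(j+1)} \, \mathrm{ad}_{K_0}^j(W) \, (-1)^{j} + (z-K_0)^{-1}\,\mathrm{ad}_{K_0}^{N+1}(W)\,(z-K_0)^{-(N+1)}\cdot(\pm 1)\ ,
\end{equation*}
which follows by iterating $[(z-K_0)^{-1},W] = (z-K_0)^{-1}[K_0,W](z-K_0)^{-1}$ and using that each $\mathrm{ad}_{K_0}^j(W)\in\cA_{m-j}$ by Assumption I (iv) (the order drops by one at each commutator since $K_0\in\cA_1$). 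Inserting this into the resolvent integral and carrying out the $z$-integrals term by term, the $j$-th term produces $\frac{1}{j!}f^{(j)}(K_0)\,\mathrm{ad}_{K_0}^j(W)$ after integration by parts against $\bar\partial\tilde f$; here one uses Lemma \ref{rem:class.symb}(ii) to see $f^{(j)}\in S^{\rho-j}\subset S^\rho$ and Lemma \ref{ref:fc} to conclude $f^{(j)}(K_0)\in\cA_{\rho-j}$, hence the $j$-th term lies in $\cA_{\rho - j + m - j}$... wait, more precisely $f^{(j)}(K_0)\in\cA_{\rho-j}$ and $\mathrm{ad}_{K_0}^j(W)\in\cA_{m-j}$, so the product is in $\cA_{\rho+m-2j}$, which for $j\geq 1$ is contained in $\cA_{\rho+m-2}$; in particular all displayed terms make sense. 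For the remainder one estimates $R_{N+1}(f,K_0,W) = \frac{\pm 1}{2\pi\im}\int_\C \bar\partial\tilde f(z)(z-K_0)^{-1}\mathrm{ad}_{K_0}^{N+1}(W)(z-K_0)^{-(N+1)}\,\di z\wedge\di\bar z$: since $\mathrm{ad}_{K_0}^{N+1}(W)\in\cA_{m-N-1}\subset\mathcal{L}(\cH^s,\cH^{s-m+N+1})$ and the resolvent powers gain regularity controlled by the decay of $\bar\partial\tilde f$ (which is $O(|\mathrm{Im}\,z|^{n})$ for any $n$ as $f\in S^\rho$), a routine bound gives $R_{N+1}\in\mathcal{L}(\cH^s,\cH^{s-[\rho]-m+N})$ for every $s$, i.e. $R_{N+1}\in\cA_{[\rho]+m-N}$ — the shift by $[\rho]$ rather than $\rho$ coming from counting how many resolvent powers are ``used up'' by $\bar\partial\tilde f$ versus gaining derivatives, exactly as in \cite{dege}.

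Finally, the uniformity-in-time statement is immediate: if $W=W(t)$ has seminorms $\wp^m_j(W(t))$ bounded uniformly in $t$, then by Remark \ref{rem:control} (estimates \eqref{est.2}--\eqref{est.3}) every operator appearing above — the $\mathrm{ad}_{K_0}^j(W(t))$, the products $f^{(j)}(K_0)\mathrm{ad}_{K_0}^j(W(t))$, and the integrand of $R_{N+1}$ — has seminorms bounded by a fixed polynomial in $\sup_t\wp^m_{N'}(W(t))$ for some $N'$, hence $R_{N+1}(f,K_0,W(t))$ has uniformly bounded $\cA_{[\rho]+m-N}$-seminorms as well. The main obstacle, and the only genuinely delicate point, is the remainder estimate: one must verify that the almost-analytic machinery indeed yields the order gain $[\rho]+m-N$ (not merely $\rho+m-N$ or worse) uniformly in $s$ and in the operator-norm sense appropriate to our scale $\{\cH^r\}$; this is precisely what \cite[Lemma C.3.1]{dege} supplies, so in practice the proof consists of checking that the hypotheses of that lemma are met in our abstract framework — namely that $K_0$ is self-adjoint with $K_0\in\cA_1$, that $\mathcal{A}$ is a graded Lie algebra, and that $f\in S^\rho_+$ — and then invoking it.
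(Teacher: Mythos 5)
Your lengthy re-derivation of the Helffer--Sj\"ostrand machinery underlying \cite[Lemma C.3.1]{dege} is reasonable background, but it is not what the paper does, and it contains a gap. The paper's proof is a one-liner: apply \cite[Lemma C.3.1]{dege} to the \emph{bounded} operator $B := K_0^{-m} W$. That cited lemma is formulated for bounded operators, and $W \in \cA_m$ is generally unbounded; this is precisely why the reduction to $B = K_0^{-m}W \in \cA_0 \subset \cL(\cH^0)$ is the whole content of the proof. Since $K_0^{-m}$ commutes with $f(K_0)$ and with $K_0$, one has $[f(K_0), W] = K_0^m [f(K_0), B]$ and $\mathrm{ad}_{K_0}^j(B) = K_0^{-m}\mathrm{ad}_{K_0}^j(W)$, so multiplying the expansion for $B$ on the left by $K_0^m$ yields exactly the claimed expansion for $W$, with $R_{N+1}(f,K_0,W) = K_0^m R_{N+1}(f,K_0,B) \in \cA_{m}\cdot\cA_{[\rho]-N} \subset \cA_{[\rho]+m-N}$.

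In your final paragraph you say the proof ``consists of checking the hypotheses'' of the D--G lemma, listing $K_0\in\cA_1$, the graded Lie algebra structure, and $f\in S^\rho_+$, and ``then invoking it.'' This misses the boundedness hypothesis on the second factor in the commutator, which is the one hypothesis that is \emph{not} automatically satisfied by $W$. Your attempt to instead redo the almost-analytic argument directly for unbounded $W$ (and growing $f$) is where the difficulty actually lives: the Helffer--Sj\"ostrand integral needs care when $f$ grows and when the resolvent acts on an operator that loses $m$ derivatives, and you dismiss the remainder bound as ``a routine bound'' when it is in fact the crux — this is exactly the part that the reduction to $B$ delegates cleanly to \cite{dege}. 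The time-uniformity remark at the end is fine and agrees with the paper's (unwritten but implicit) reasoning.
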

\begin{proof}
Apply \cite[Lemma C.3.1]{dege}  to  the bounded operator
$B= K_0^{-m}W$. 
\end{proof}

  \section{An abstract  proof of Egorov Theorem }\label{AET}
 In order to  check  Assumption II, we introduce the following condition 
 
 \vspace{0.5 cm}
 
    \noindent{\bf Assumption II-CL: } 
     For every $m\in\R$  and every  $A\in\cA_m$  there  exists  $\Phi^{(t)}(A)\in C^1_b(\R_t, \cA_m)$  and $R(A, t)\in C^0_b(\R_t, \cA_{m-1})$  such that 
      $\Phi^{(0)}(A)=A$  and 
     \beq\label{alh}
     \frac{d}{dt}\Phi^{(t)}(A) = \im^{-1}[\Phi^{(t)}(A),  K_0] + R(A, t)
     \eeq
    In applications in a pseudodifferential operator setting, we have $A= Op(a)$, $a$ is the symbol of $A$  and one can choose 
$\Phi^{(t)}(A) = Op(a\circ\phi^t)$ where $\phi^t$ is the classical flow of the symbol  of $K_0$.  Then  one has   to verify that 
$a\circ\phi^t$ belongs to the same symbol class as $a$ (see for example \cite{taylor}).
     \begin{theorem}[Abstract Egorov Theorem]\label{thm:AET}
     If  Assumption I and Assumption II-CL  are  satisfied then  Assumption II   holds true.
        \end{theorem}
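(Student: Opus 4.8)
The plan is to fix $A \in \cA_m$ and compare the true Heisenberg evolution $A(t) := \e^{\im t K_0} A \e^{-\im t K_0}$ with the approximate evolution $\Phi^{(t)}(A)$ supplied by Assumption II-CL, and to bootstrap using the perturbation-by-smoothing stability Assumption I (v). First I would observe that $A(t)$ makes sense as an operator in $\cL(\cH^s, \cH^{s-m})$ for each $s$: this follows because $\e^{\pm \im t K_0}$ preserves every $\cH^r$ isometrically up to the obvious power of $\la t\ra$ coming from $K_0^r$ not commuting — in fact $\e^{\pm\im tK_0}$ commutes with $K_0^r$, so it is an isometry on each $\cH^r$, and hence $A(t) \in \bigcap_s \cL(\cH^s,\cH^{s-m})$ with the same bounds as $A$. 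The point of the theorem is the finer statement that $A(t)$ actually stays in the algebra $\cA_m$ with locally uniform seminorm bounds, i.e. $t \mapsto A(t) \in C^0_b(\R,\cA_m)$.

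The key step is a Duhamel-type comparison. Set $B(t) := \e^{\im t K_0}\Phi^{(t)}(A)\e^{-\im t K_0}$ and compute $\frac{d}{dt}B(t)$ using \eqref{alh}: the terms $\im^{-1}[\Phi^{(t)}(A),K_0]$ produced by differentiating $\Phi^{(t)}(A)$ cancel exactly against the terms $\im[K_0,\,\cdot\,]$ produced by differentiating the conjugating exponentials, leaving
\[
\frac{d}{dt}\bigl(\e^{\im tK_0}\Phi^{(t)}(A)\e^{-\im tK_0}\bigr) = \e^{\im tK_0}\,R(A,t)\,\e^{-\im tK_0}\ .
\]
Since at $t=0$ we have $B(0)=\Phi^{(0)}(A)=A$, integrating gives
\[
A(t) \;=\; \Phi^{(t)}(A) \;+\; \int_0^t \e^{-\im sK_0}\Bigl(\e^{\im sK_0}R(A,s)\e^{-\im sK_0} - \cdots\Bigr)\,\di s
\]
— more precisely, rearranging, $\e^{\im tK_0}A\e^{-\im tK_0} = \Phi^{(t)}(A) + \int_0^t \e^{\im sK_0}R(A,s)\e^{-\im sK_0}\,\di s$ after conjugating the identity $B(t)-B(0)=\int_0^t \e^{\im sK_0}R(A,s)\e^{-\im sK_0}\di s$ back by $\e^{-\im tK_0}$; one must track the conjugations carefully but no more than that. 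The upshot is the identity
\begin{equation}
A(t) = \Phi^{(t)}(A) - \im \int_0^t \e^{\im (s-t)K_0}\,\e^{\im sK_0}\,R(A,s)\,\e^{-\im sK_0}\,\e^{-\im(s-t)K_0}\cdots
\end{equation}
whose only content we need is: $A(t) = \Phi^{(t)}(A) + (\text{conjugate of an integral of }R(A,\cdot))$.

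Now the main obstacle, and the place where the argument becomes genuinely inductive: the remainder $R(A,s)$ lies in $\cA_{m-1}$, which is \emph{better} than $\cA_m$, but it is still being conjugated by $\e^{\pm\im sK_0}$ — exactly the operation whose good behavior we are trying to prove. So the argument has to run by downward induction on the order, or rather: I claim that $\e^{\im sK_0} C \e^{-\im sK_0} \in C^0_b(\R,\cA_{m-1})$ for every $C \in \cA_{m-1}$, and use this to control the integral term, which then shows $A(t) - \Phi^{(t)}(A) \in C^0_b(\R,\cA_{m-1}) \subset C^0_b(\R,\cA_m)$, hence $A(t) \in C^0_b(\R,\cA_m)$. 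This looks circular, but it is saved by the $N$-smoothing stability: iterate the identity, applying Assumption II-CL to $R(A,s) \in \cA_{m-1}$ itself to write $\e^{\im sK_0}R(A,s)\e^{-\im sK_0} = \Phi^{(s)}(R(A,s)) + (\text{integral of something in }\cA_{m-2})$, and so on. After $k$ steps one has written $A(t)$ as an explicit element of $\cA_m$ (a finite sum of $\Phi$-terms and iterated integrals, each a $C^0_b(\R,\cA_m)$ function of $t$ by construction of $\Phi$ and by the boundedness/continuity hypotheses in Assumption II-CL) plus a remainder term which, being a $k$-fold conjugated integral of an operator in $\cA_{m-k}$, maps $\cH^s \to \cH^{s-m+k}$ with bounds uniform on compact $t$-intervals — i.e. it is $(k-m)$-smoothing. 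Since $k$ is arbitrary, $A(t)$ decomposes for every $N$ as (an operator explicitly in $\cA_m$) $+$ ($N$-smoothing), so Assumption I (v) forces $A(t)\in\cA_m$; tracking the finitely many seminorm estimates through the finite sum (using \eqref{est.1}--\eqref{est.3} and the continuity clauses in I(iii), II-CL) gives the locally uniform bounds, i.e. $t\mapsto A(t)\in C^0_b(\R,\cA_m)$. The hard part is precisely organizing this induction on the order so that the circularity is broken by the smoothing gain at each level, and making sure the conjugations $\e^{\pm\im sK_0}$ acting on the lower-order pieces are handled only through this same identity and never assumed a priori to preserve $\cA_{m-j}$ for $j \le 0$.
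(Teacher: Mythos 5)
Your proof follows essentially the same route as the paper's: compare the true Heisenberg evolution $\e^{\im tK_0}A\e^{-\im tK_0}$ with the approximate flow $\Phi^{(t)}(A)$ via a Duhamel identity, iterate that identity (applying Assumption II-CL to the successive remainders $R(\cdot,\cdot)\in\cA_{m-1},\cA_{m-2},\dots$) until the residual is arbitrarily smoothing, and conclude by stability under smoothing perturbations (Assumption I (v)). The one slip is in your choice of interpolating family: with $B(t)=\e^{\im tK_0}\Phi^{(t)}(A)\e^{-\im tK_0}$ the two commutator contributions $\im[K_0,\Phi^{(t)}(A)]$ (from the exponentials) and $\im^{-1}[\Phi^{(t)}(A),K_0]$ (from Assumption II-CL) are in fact equal, both being $-\im[\Phi^{(t)}(A),K_0]$, so they double rather than cancel; the correct interpolant, as in the paper, is $\tau\mapsto U(\tau-t)\Phi^{(\tau)}(A)U(t-\tau)$ with $U(s)=\e^{-\im sK_0}$, whose $\tau$-derivative reduces cleanly to $U(\tau-t)R(A,\tau)U(t-\tau)$. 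Since you explicitly flag the need to ``track the conjugations carefully'' and the downstream iteration is insensitive to this sign choice, the argument is structurally sound and identical to the paper's.
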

\begin{proof}
   We follow \cite{robook} (p. 202-207).        Let $U(t) = {\rm e}^{-\im tK_0}$.   Compute
        \begin{align*}
        \frac{d}{d\tau}& \left(U(\tau-t)\Phi^{(\tau)}(A)U(t-\tau)\right) \\
     &   = U(\tau-t)\left(\im [\Phi^{(\tau)}(A),K_0] +\frac{d}{d\tau}\Phi^{(\tau)}(A)\right)U(t-\tau).
        \end{align*}
   So using \eqref{alh}   and integrate in $\tau$ between  0 and $t$ we get 
\beq\label{eg1}
   U(-t)AU(t) = \Phi^{(t)}(A)  + \int_0^t U(\tau-t)R(A,\tau)U(t-\tau) \di \tau.
\eeq
   Now we iterate from  this formula. In the following step  we apply this formula  for every $\tau$ to $A_{new}  =  R(A,\tau)$. So we get 
   \begin{align*}
     U(-t)A & U(t)  = A_0(t) +A_1(t) \\
     & + \int_0^t\int_0^{t-\tau}U(\tau+\tau_1-t)R(R(A,\tau),\tau-\tau_1)U(t-\tau-\tau_1)\di \tau \di \tau_1.
    \end{align*}
   where  $A_0(t) =  \Phi^{(t)}(A) $, $A_1(t) = \int_0^t \Phi^{(t-\tau)}(R(A,\tau)) \di \tau \in \cA_{m-1}$ and  \\$R(R(A,\tau),\tau-\tau_1)\in \cA_{m-2}$.\\
   At the step $N$ we get easily by induction:
\begin{align*}
   &  U(-t)AU(t)  = A_0(t) +A_1(t) + \cdots + A_N(t) \nonumber \\
   &\quad  +  \int_0^t\int_0^{t-\tau_0}\cdots\int_0^{t-\tau_0-\cdots-\tau_N} \di \tau_0 \di \tau_1\cdots \di \tau_N \nonumber \\
& \quad U(\tau_0+\tau_1+\cdots+\tau_N-t)R^{(N)}(A, \tau_0, \tau_1,\cdots,\tau_N)U(t-\tau_0-\tau_1-\cdots-\tau_N),
\end{align*}
where  $A_j\in C^0_b(\R,\cA_{m-j})$  and $R^{(N)}(A,\tau_1,\cdots,\tau_N)\in C_b^0(\R^{N+1}, \cA_{m-N-1})$.\\
Now we remark that the remainder term is as smoothing as we want by taking $N$ large enough, so the algebra being stable by smoothing perturbations
we get Assumption II. 
\end{proof}

 \section{Proof of Lemma \ref{lem.gio}}\label{tilde}

We reproduce here the proof given in the lecture notes by Giorgilli
\cite{gio} (in particular the technical results are contained in
Appendix A). A general presentation containing also the results that
we use here can be found in \cite{siegel}.

We start by stating without proof a simple Lemma.

\begin{lemma}
  \label{change}
Let $\be_1,...,\be_d$ and $\be'_1,...,\be'_d$ be two basis of $\Z^d$;
then the matrix $M=(M_{ij})$ s.t. $\be_i'=\sum_j M_{ij}\be_j$ is unimodular
with integer entries.  
\end{lemma}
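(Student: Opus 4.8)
The plan is to use the defining property of a $\Z$-basis of the free module $\Z^d$, namely that every element has a \emph{unique} expression as an integer linear combination of basis elements. First I would observe that since each $\be'_i$ lies in $\Z^d$ and $\{\be_1,\dots,\be_d\}$ is a $\Z$-basis, there are (uniquely determined) integers $M_{ij}$ with $\be'_i=\sum_j M_{ij}\be_j$; this already shows $M$ has integer entries. By the symmetric argument, exchanging the roles of the two bases, there is an integer matrix $N=(N_{ij})$ with $\be_i=\sum_j N_{ij}\be'_j$.

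Next I would combine the two relations. Substituting one into the other gives $\be'_i=\sum_{j,k}M_{ij}N_{jk}\be'_k$, i.e. $\sum_k (MN)_{ik}\,\be'_k=\be'_i$ for every $i$. Since $\{\be'_1,\dots,\be'_d\}$ is in particular linearly independent over $\R$ (being a basis of $\Z^d$, hence a basis of $\R^d$), uniqueness of the coefficients forces $MN=\mathbbm{1}$, the $d\times d$ identity matrix; symmetrically $NM=\mathbbm{1}$.

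Finally I would take determinants: from $MN=\mathbbm{1}$ one gets $\det(M)\det(N)=1$, and since both $\det(M)$ and $\det(N)$ are integers, each must equal $\pm1$. Hence $M$ is an integer matrix with $\det(M)=\pm1$, i.e. unimodular, which is the claim. There is essentially no obstacle here; the only point to state carefully is the passage from ``$\Z$-basis'' to the two facts used — uniqueness of integer coordinates (to get integrality of $M$ and $N$) and $\R$-linear independence (to get $MN=NM=\mathbbm{1}$) — so I would make sure both are invoked explicitly.
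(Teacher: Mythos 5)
Your proof is correct, and it is the standard argument. Note that the paper itself states Lemma \ref{change} \emph{without proof} (it is introduced with the phrase ``We start by stating without proof a simple Lemma''), so there is no argument in the paper to compare against; your two-step reasoning — integrality of $M$ and $N$ from uniqueness of $\Z$-coordinates, then $MN=NM=\mathbbm{1}$ and $\det(M)\det(N)=1$ forcing $\det M=\pm1$ — is exactly what a reader would be expected to supply.
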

Then one has the following corollary.
\begin{corollary}
  \label{det}
A collection of vectors $\be_j\in \Z^d$, $j=1,...,d$, is a basis of $\Z^d$ if and
only if the determinant of the matrix having $\be_j$ as rows is 1.
\end{corollary}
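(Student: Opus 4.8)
The plan is to prove the corollary from Lemma \ref{change} together with the elementary fact that an integer square matrix is invertible over $\Z$ precisely when its determinant is a unit of $\Z$, i.e. equals $\pm 1$. Strictly speaking the statement should read ``$\pm 1$'': the sign can always be normalized to $+1$ by exchanging two of the vectors $\be_j$, so I will establish the version with $\pm 1$ and then remark on the sign. Throughout, let $B$ denote the $d\times d$ matrix whose rows are $\be_1,\dots,\be_d$.

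For the ``only if'' direction, suppose $(\be_1,\dots,\be_d)$ is a basis of $\Z^d$. Then it and the standard basis $(\mathbf e_1,\dots,\mathbf e_d)$ are two bases of $\Z^d$, so Lemma \ref{change} applies: the matrix $M$ defined by $\be_i=\sum_j M_{ij}\mathbf e_j$ is unimodular with integer entries. But $M$ is exactly $B$, so $\det B=\pm 1$. For the ``if'' direction, suppose $\det B=\pm 1$. Then $B^{-1}=(\det B)^{-1}\,\mathrm{adj}(B)$ has integer entries, since the adjugate of an integer matrix is an integer matrix and $(\det B)^{-1}=\pm 1$. Given $w\in\Z^d$, set $c:=B^{-\mathsf T}w\in\Z^d$; then $c^{\mathsf T}B=w^{\mathsf T}$, i.e. $w=\sum_j c_j\be_j$ with integer coefficients, so the $\be_j$ span $\Z^d$ over $\Z$. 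They are $\R$-linearly independent because $\det B\neq 0$, hence the representation of each $w$ is unique, and $(\be_1,\dots,\be_d)$ is a basis of $\Z^d$.

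There is no genuine obstacle here: the whole argument is the standard characterization $\mathrm{GL}_d(\Z)=\{B\in M_d(\Z):\det B=\pm 1\}$, with one half supplied directly by Lemma \ref{change} and the other half by the adjugate formula. The only point requiring a word of care is the sign discrepancy between ``$\pm 1$'' and the ``$1$'' appearing in the statement, which is harmless since an odd permutation of the $\be_j$ changes the sign of $\det B$ without affecting whether they form a basis.
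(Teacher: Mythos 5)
Your proof is correct, and it is more careful than the paper's. The paper's entire proof is the single sentence that the corollary ``immediately follows from Lemma~\ref{change} and the remark that such a property holds for the canonical basis,'' which really only covers the ``only if'' direction: applying Lemma~\ref{change} with the canonical basis as one of the two bases gives that the matrix $B$ formed from any basis is unimodular, hence $\det B = \pm1$. You make that direction explicit and, in addition, you supply the converse with the standard adjugate argument showing that $\det B = \pm 1$ forces $B^{-1}$ to have integer entries, so that the rows of $B$ generate all of $\Z^d$ over $\Z$; the paper leaves this half implicit. You also correctly flag that the statement as printed is off by a sign: with $\be_1=(0,1)$, $\be_2=(1,0)$ in $\Z^2$ one has a basis with $\det B = -1$, so the corollary should read ``$\pm 1$'' rather than ``$1$'' (or the ordering of the $\be_j$ should be normalized, as you note). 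In short, same approach as the paper but with the converse direction actually written out and a genuine (if harmless) sign error in the statement identified.
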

The corollary immediately follows from Lemma \ref{change} and the remark that
such a property holds for the canonical basis of $\Z^d$.

 Define now the resonance modulus $\cM_\nu$ of $\nu$ by
$$
\cM_\nu:=\left\{k\in\Z^d\ :\ \nu\cdot k=0\right\}\ .
$$ This is a discrete subgroup of $\R^d$ which satisfies
\begin{equation}
  \label{span}
{\rm span} (\cM_\nu)\cap\Z^d=\cM_\nu\ .
\end{equation}
Let $0\leq r\leq d-1$ be the dimension of $\cM_\nu$. It is well known
that any discrete subgroup of $\R^d$ admits a basis. Let
$\be_1,...,\be_r$, be a basis of $\cM_\nu$, and remark that the
vectors $\be_j$ have integer components.  Then the following result
holds\footnote{this can be found as Theorem 31 in \cite{siegel}, or as
  Lemma A.6 in \cite{gio}}.

\begin{lemma}
  \label{comple}
There exist $\tilde d:=d-r$ vectors $\bu_1,...,\bu_{\tilde d}$ with
integer entries, such that $\be_1,...,\be_r, \bu_1,...,\bu_{\tilde d}$
form a basis of $\Z^d$.
\end{lemma}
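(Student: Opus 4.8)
The plan is to deduce the statement from the saturation property \eqref{span}, which is precisely the condition under which a $\Z$-basis of $\cM_\nu$ completes to a $\Z$-basis of $\Z^d$. The first step is to recast \eqref{span} as the assertion that the quotient group $G:=\Z^d/\cM_\nu$ is torsion free: if $k\in\Z^d$ and $mk\in\cM_\nu$ for some integer $m\geq 1$, then $k\in\mathrm{span}_\R(\cM_\nu)\cap\Z^d=\cM_\nu$ by \eqref{span}, so the class of $k$ in $G$ vanishes; the converse implication is trivial.

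Next I would invoke the structure theorem for finitely generated abelian groups: since $G$ is finitely generated and torsion free it is free, and counting ranks in $0\to\cM_\nu\to\Z^d\to G\to 0$ gives $\mathrm{rank}\,G=d-r=\tilde d$, so $G\cong\Z^{\tilde d}$. As $G$ is free the sequence splits, so I may lift any $\Z$-basis of $G$ to integer vectors $\bu_1,\dots,\bu_{\tilde d}\in\Z^d$. It then remains to verify that $\be_1,\dots,\be_r,\bu_1,\dots,\bu_{\tilde d}$ is a basis of $\Z^d$: it generates $\Z^d$ since $\{\be_j\}$ generates $\cM_\nu$ while the $\bu_l$ reduce modulo $\cM_\nu$ to generators of $G$; and it is $\Z$-linearly independent since a nontrivial relation $\sum_j a_j\be_j+\sum_l b_l\bu_l=0$ would descend to a nontrivial relation among the images of the $\bu_l$ in $G$ unless all $b_l=0$, and then $\sum_j a_j\be_j=0$ forces all $a_j=0$. (One could also conclude via Corollary \ref{det}, the transition matrix to the standard basis of $\Z^d$ being unimodular.)

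I expect the only real content to lie in the two classical inputs ``\eqref{span} $\iff$ $G$ torsion free'' and ``finitely generated torsion free $\Rightarrow$ free''; once these are in place, everything else is bookkeeping. If a constructive statement is wanted, the same conclusion comes out of the Smith normal form of the matrix $B$ with rows $\be_1,\dots,\be_r$: writing $UBV=(D\mid 0)$ with $U\in\mathrm{GL}_r(\Z)$, $V\in\mathrm{GL}_d(\Z)$ and $D=\mathrm{diag}(d_1,\dots,d_r)$, the hypothesis \eqref{span} pushed through the automorphism $x\mapsto xV$ of $\Z^d$ forces every elementary divisor $d_i$ to be $1$; then $V^{-1}$ supplies explicit vectors $\bu_1,\dots,\bu_{\tilde d}$ completing a unimodular modification of $\be_1,\dots,\be_r$ to a basis of $\Z^d$, and since two $\Z$-bases of $\cM_\nu$ differ by an element of $\mathrm{GL}_r(\Z)$ this yields the statement for the given $\be_j$. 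I would present the first argument in the main text and relegate the constructive variant to a remark.
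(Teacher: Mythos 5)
Your proof is correct. Be aware, though, that the paper does not actually prove Lemma \ref{comple}: it is stated with a footnote citing it as Theorem 31 of \cite{siegel} and Lemma A.6 of \cite{gio}, so there is no in-text proof to compare against. What you supply is a genuine self-contained argument. Your route — recasting \eqref{span} as torsion-freeness of $G=\Z^d/\cM_\nu$, invoking the structure theorem for finitely generated abelian groups to conclude $G\cong\Z^{\tilde d}$, splitting the exact sequence $0\to\cM_\nu\to\Z^d\to G\to 0$, and then checking spanning and independence — is the cleanest abstract version and meshes nicely with the paper's Corollary \ref{det} for the unimodularity check. The references the paper points to (Siegel, Giorgilli) prove essentially the same statement by a more hands-on lattice/reduction argument close in spirit to your Smith-normal-form variant, so either of your two presentations would be an acceptable substitute; the abstract one has the advantage of isolating exactly where the saturation property \eqref{span} enters, namely as torsion-freeness. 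One minor wrinkle in the constructive sketch: after writing $UBV=(D\mid 0)$ you need to pass back from the $\Z$-basis of $\cM_\nu$ given by the rows of $UB$ to the original $\be_j$; you do note that the two bases differ by an element of $\mathrm{GL}_r(\Z)$, and that is exactly what makes the resulting $d\times d$ transition matrix unimodular, so the sketch is sound once that remark is read as part of the argument rather than an aside.
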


Then one obtains immediately the following
\begin{corollary}
  \label{M}
Let $M$ be the matrix with rows given by the vectors $\be_j$ and the
vectors $\bu_j$; define $\check\nu:=M\nu $, then one has
$\check\nu_i=0$, $\forall i=1,...,r$, while $\tilde
\nu_i:=\check\nu_{r+i}$, $i=1,...,\tilde d$ are independent over the
rationals.
\end{corollary}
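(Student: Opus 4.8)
The plan is to unwind the definitions, since the corollary is essentially bookkeeping on top of Lemma~\ref{comple}. By construction the first $r$ rows of $M$ are the vectors $\be_1,\dots,\be_r$, so for $i\le r$ one has $\check\nu_i=(M\nu)_i=\be_i\cdot\nu=0$, because $\be_i\in\cM_\nu$; this gives the first assertion at once. Likewise, for $i=1,\dots,\tilde d$ the $(r+i)$-th row of $M$ is $\bu_i$, hence $\tilde\nu_i=\check\nu_{r+i}=\bu_i\cdot\nu$.

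For the rational independence I would argue by contraposition. Suppose $q_1,\dots,q_{\tilde d}$ are rationals, not all zero, with $\sum_{i=1}^{\tilde d}q_i\tilde\nu_i=0$; clearing denominators we may assume the $q_i\in\Z$. Then $\bigl(\sum_{i=1}^{\tilde d}q_i\bu_i\bigr)\cdot\nu=\sum_i q_i(\bu_i\cdot\nu)=\sum_i q_i\tilde\nu_i=0$, so the integer vector $w:=\sum_i q_i\bu_i$ lies in $\cM_\nu$. Since $\be_1,\dots,\be_r$ is a basis of the discrete subgroup $\cM_\nu$ and $w\in\cM_\nu$, we may write $w=\sum_{j=1}^r p_j\be_j$ with $p_j\in\Z$. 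Thus $\sum_{j=1}^r p_j\be_j-\sum_{i=1}^{\tilde d}q_i\bu_i=0$ is a nontrivial relation with integer (hence real) coefficients among the vectors $\be_1,\dots,\be_r,\bu_1,\dots,\bu_{\tilde d}$, which by Lemma~\ref{comple} form a basis of $\Z^d$ and are in particular linearly independent over $\R$. Therefore all $p_j$ and all $q_i$ vanish, a contradiction. Hence the $\tilde\nu_i$ are independent over the rationals (so none of them is zero).

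The one step that deserves care is the passage $w\in\cM_\nu\Rightarrow w=\sum_j p_j\be_j$ with \emph{integer} coefficients: it uses that $\be_1,\dots,\be_r$ was chosen as a $\Z$-basis of the subgroup $\cM_\nu$ — a legitimate choice because a discrete subgroup of $\R^d$ is free of finite rank — rather than merely as an $\R$-basis of ${\rm span}(\cM_\nu)$; this integrality is exactly what \eqref{span} records. Everything else is formal. For completeness, this corollary yields Lemma~\ref{lem.gio} immediately: by Corollary~\ref{det} the matrix $M$, having the $\Z^d$-basis $\be_1,\dots,\be_r,\bu_1,\dots,\bu_{\tilde d}$ as rows, satisfies $\det M=\pm1$, so $M^{-1}$ is integral; writing $\nu=M^{-1}\check\nu$ and using $\check\nu_i=0$ for $i\le r$ gives $\nu=\sum_{i=1}^{\tilde d}\tilde\nu_i\,\bv_i$, where $\bv_i\in\Z^d$ is the $(r+i)$-th column of $M^{-1}$.
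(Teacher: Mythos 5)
Your proof is correct. The paper gives no argument for this corollary (it simply says ``one obtains immediately''), so what you have written is exactly the kind of filling-in the text is leaving to the reader, and the route you take is the natural one: the vanishing of $\check\nu_1,\dots,\check\nu_r$ is immediate from the definition of $\cM_\nu$, and the rational independence of $\tilde\nu_1,\dots,\tilde\nu_{\tilde d}$ follows by pulling back a putative rational relation to an integer vector in $\cM_\nu$ and then invoking the linear independence of the rows of $M$.

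One small remark on your parenthetical aside. You emphasize that one must obtain $w=\sum_j p_j\be_j$ with \emph{integer} coefficients and attribute this to \eqref{span}. Neither is quite right: the integrality of the $p_j$ plays no role in the contradiction, which only uses that the rows of $M$ are $\R$-linearly independent (they are $d$ vectors forming a $\Z$-basis of $\Z^d$, so $\det M=\pm1\neq 0$); you could equally write $w\in\cM_\nu\subset\operatorname{span}(\be_1,\dots,\be_r)$ and take $p_j\in\R$. Moreover, the fact that $w\in\cM_\nu$ comes directly from the definition $\cM_\nu=\{k\in\Z^d:\nu\cdot k=0\}$ (you already know $w\in\Z^d$ and $w\cdot\nu=0$), not from the saturation property \eqref{span}; that property is what is needed for Lemma~\ref{comple}, not here. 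This is a harmless over-attribution that does not affect the correctness of the argument, but it is worth not overstating which lemma carries which weight.
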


\noindent{\it Proof of Lemma \ref{lem.gio}.} Consider the matrix
$M^{-1}$: since $M$ is unimodular with integer entries, the same is
true for $M^{-1}$, and one has $\nu=M^{-1}\check\nu$; however, since
the first $r$ components of $\check\nu$ vanish, such an expression
reduces to a linear combination of vectors with integer entries, the
coefficients of the combination being 
$\tilde\nu_1,...,\tilde \nu_{\tilde d}$. \qed


\begin{thebibliography}{DLvStvc08}
 \bibliographystyle{alphaurl}

\bibitem[Arn89]{Arnold}
V. Arnold.
\newblock{\em Mathematical methods of classical mechanics.}   Graduate Texts in Mathematics, 60. Springer-Verlag, New York, 1989.

\bibitem[Bam17a]{Bam16I}
D.~Bambusi.
\newblock Reducibility of 1-d {S}chr\"odinger equation with time quasiperiodic
  unbounded perturbations, {I}.
\newblock {\em Trans. Amer. Math. Soc.}, 2017.
\newblock \href {http://dx.doi.org/10.1090/tran/7135}
  {\path{doi:10.1090/tran/7135}}.

\bibitem[Bam17b]{Bam17b}
D.~Bambusi.
\newblock Reducibility of 1-d {S}chr\"odinger equation with time
  quasiperiodic unbounded perturbations, {II}.
\newblock {\em Comm. Math. Phys.}, 353(1):353--378, 2017.
\newblock \href {http://dx.doi.org/10.1007/s00220-016-2825-2}
  {\path{doi:10.1007/s00220-016-2825-2}}.

\bibitem[BBM14]{BBM14}
P.~Baldi, M.~Berti, and R.~Montalto.
\newblock K{AM} for quasi-linear and fully nonlinear forced perturbations of
  {A}iry equation.
\newblock {\em Math. Ann.}, 359(1-2):471--536, 2014.

\bibitem[BGMR17]{BGMR1}
D.~{Bambusi}, B.~{Gr\'ebert}, A.~{Maspero}, and D.~{Robert}.
\newblock {Reducibility of the quantum harmonic oscillator in $d$-dimensions
  with polynomial time dependent perturbation}.
\newblock February 2017.
\newblock \href {http://arxiv.org/abs/1702.05274} {\path{arXiv:1702.05274}}.

\bibitem[BJ98]{barjoy}
J.~M. Barbaroux and A.~Joye.
\newblock Expectation values of observables in time-dependent quantum
  mechanics.
\newblock {\em J. of Statistical Physics}, 90-5/6:1225--1249, 1998.

\bibitem[Bou99]{Bourgain1999}
J.~Bourgain.
\newblock Growth of {S}obolev norms in linear {S}chr{\"o}dinger equations with
  quasi-periodic potential.
\newblock {\em Communications in Mathematical Physics}, 204(1):207--247, 1999.


\bibitem[BJLPN]{buni}
L. Bunimovich, H. R. Jauslin, J. L. Lebowitz, A. Pellegrinotti and P.  Nielaba.
\newblock Diffusive energy growth in classical and quantum driven oscillators. 
\newblock{\em J. Statist. Phys.},  62(3-4):793--817, 1991. 


\bibitem[CdV79]{cdv}
Y.~Colin~de Verdi\`ere.
\newblock Sur le spectre des op\'erateurs elliptiques \`a bicaract\'eristiques
  toutes p\'eriodiques.
\newblock {\em Comment. Math. Helv.}, 54(3):508--522, 1979.

\bibitem[Del10]{del10}
J.M. Delort.
\newblock{ Growth of Sobolev norms of solutions of linear Schr\"odinger equations on some compact manifolds.}
\newblock{\em  Int. Math. Res. Notices},  12: 2305--2328, 2010.

\bibitem[Del14]{del}
J.-M. Delort.
\newblock Growth of {S}obolev norms for solutions of time dependent
  {S}chr\"odinger operators with harmonic oscillator potential.
\newblock {\em Comm. Partial Differential Equations}, 39(1):1--33, 2014.

\bibitem[DG97]{dege}
J.~Derezi\'nski and C.~G\'erard.
\newblock {\em Scattering theory of classical and quantum {$N$}-particle
  systems}.
\newblock Texts and Monographs in Physics. Springer-Verlag, Berlin, 1997.

\bibitem[DLS08]{duclos}
P.~Duclos, O.~Lev, and P.~{\v S}{\v t}ov\'\i{\v c}ek.
\newblock On the energy growth of some periodically driven quantum systems with
  shrinking gaps in the spectrum.
\newblock {\em J. Stat. Phys.}, 130(1):169--193, 2008.

\bibitem[EK09]{EK09}
H.~L. Eliasson and S.~B. Kuksin.
\newblock On reducibility of {S}chr\"odinger equations with quasiperiodic in
  time potentials.
\newblock {\em Comm. Math. Phys.}, 286(1):125--135, 2009.

\bibitem[FZ12]{fang}
D.~Fang and Q.~Zhang.
\newblock On growth of {S}obolev norms in linear {S}chr\"odinger equations with
  time dependent {G}evrey potential.
\newblock {\em J. Dynam. Differential Equations}, 24(2):151--180, 2012.

\bibitem[Gio]{gio}
A.~Giorgilli.
\newblock {\em Lecture notes on Hamiltonian systems and perturbations}.
\newblock URL: \url{http://www.mat.unimi.it/users/antonio/hamsys/hamsys.html}.

\bibitem[GP16]{GP}
B.~{Gr{\'e}bert} and E.~{Paturel}.
\newblock {On reducibility of quantum harmonic oscillator on $\mathbb{R}^d$
  with quasiperiodic in time potential}.
\newblock {\em ArXiv e-prints}, March 2016.
\newblock \href {http://arxiv.org/abs/1603.07455} {\path{arXiv:1603.07455}}.

\bibitem[Gui85]{guil}
V.~Guillemin.
\newblock A new proof of {W}eyl's formula on the asymptotic distribution of
  eigenvalues.
\newblock {\em Advances in Math.}, 55:131--160, 1985.

\bibitem[H{\"o}r85]{ho}
L.~H{\"o}rmander.
\newblock {\em The analysis of linear partial differential operators I-III}.
\newblock Grundlehren der mathematischen Wissenschaften 256. Springer-Verlag,
  1985.

\bibitem[How89]{How89}
J.~Howland.
\newblock Floquet operators with singular spectrum. I, II.
\newblock {\em Ann. Inst. H. Poincaré Phys. Théor. },  
50(3):  309--323, 325--334, 1989.

\bibitem[How92]{How92}
J.~Howland.
\newblock Stability of quantum oscillators.
\newblock {\em J. Physics}, A-25:5177--5181, 1992.

\bibitem[HR82a]{hero2}
B.~Helffer and D.~Robert.
\newblock Asymptotique des niveaux d'\'energie pour des hamiltoniens \`a un
  degr\'e de libert\'e.
\newblock {\em Duke Math. J.}, 49(4):853--868, 1982.

\bibitem[HR82b]{hero}
B.~Helffer and D.~Robert.
\newblock Propri\'et\'es asymptotiques du spectre d'op\'erateurs
  pseudodiff\'erentiels sur {${\bf R}^{n}$}.
\newblock {\em Comm. Partial Differential Equations}, 7(7):795--882, 1982.

\bibitem[IPT05]{IPT05}
G.~Iooss, P.~I. Plotnikov, and J.~F. Toland.
\newblock Standing waves on an infinitely deep perfect fluid under gravity.
\newblock {\em Arch. Ration. Mech. Anal.}, 177(3):367--478, 2005.

\bibitem[Joy94]{joy}
A.~Joye.
\newblock Absence of absolutely continuous spectrum of {F}loquet operators.
\newblock {\em J. Stat. Phys.}, 75:929--952, 1994.

\bibitem[Joy96]{joy2}
A. Joye.
\newblock Upper bounds for the energy expectation in time-dependent quantum mechanics.
\newblock{\em J. Stat. Phys.},  85:575--606, 1996.



\bibitem[Mon17]{Mon17}
R.~Montalto.
\newblock On the growth of {S}obolev norms for a class of linear
  {S}chr\"odinger equations on the torus with superlinear dispersion.
\newblock {\em Preprint}, 2017.

\bibitem[MR17]{MaRo}
A.~Maspero and D.~Robert.
\newblock On time dependent {S}chrödinger equations: {G}lobal well-posedness
  and growth of {S}obolev norms.
\newblock {\em Journal of Functional Analysis}, 273(2):721 -- 781, 2017.
\newblock \href {http://dx.doi.org/10.1016/j.jfa.2017.02.029}
  {\path{doi:10.1016/j.jfa.2017.02.029}}.

\bibitem[Nen97]{nen}
G.~Nenciu.
\newblock Adiabatic theory: stability of systems with increasing gaps.
\newblock {\em Annales de l'I. H. P}, 67-4:411--424, 1997.

\bibitem[PT01]{PT01}
P.~I. Plotnikov and J.~F. Toland.
\newblock Nash-{M}oser theory for standing water waves.
\newblock {\em Arch. Ration. Mech. Anal.}, 159(1):1--83, 2001.


\bibitem[Sie89]{siegel}
C. Siegel.
\newblock {\em Lectures on the geometry of numbers.}
\newblock Springer-Verlag, Berlin, 1989.


\bibitem[Rob87]{robook}
D.~Robert.
\newblock {\em Autour de l'approximation semi-classique}.
\newblock PM 68. Birkh{\"a}user, 1987.

%\bibitem[Wei77]{weinstein}
%A. Weinstein.
%\newblock{ Asymptotics of eigenvalue clusters for the Laplacian plus a potential.}
%\newblock{\em  Duke Math. J.},  44(4): 883--892, 1977.
\bibitem[Tay91]{taylor}
M.~Taylor.
\newblock
{\em Pseudodifferential operators and  nonlinear PDE}.
\newblock PM 100. Birkh{\"a}user, 1991.

\bibitem[Wan08]{wang08}
W.-M. Wang.
\newblock Logarithmic bounds on {S}obolev norms for time dependent linear
  {S}chr{\"o}dinger equations.
\newblock {\em Communications in Partial Differential Equations},
  33(12):2164--2179, 2008.

\end{thebibliography}
\end{document}